\newtheorem{thm}{Theorem}[section]
\newcommand{\bt}{\begin{thm}}
\newcommand{\et}{\end{thm}}
\newtheorem{ex}[thm]{Example}
\newtheorem{cor}[thm]{Corollary}   
\newcommand{\bc}{\begin{cor}}
\newcommand{\ec}{\end{cor}}
\newtheorem{lem}[thm]{Lemma}   
\newcommand{\bl}{\begin{lem}}
\newcommand{\el}{\end{lem}}
\newtheorem{prop}[thm]{Proposition}
\newcommand{\bp}{\begin{prop}}
\newcommand{\ep}{\end{prop}}
\newtheorem{defn}[thm]{Definition}
\newcommand{\bd}{\begin{defn}}    
\newcommand{\ed}{\end{defn}}
\newtheorem{rmrk}[thm]{Remark}   
\newcommand{\br}{\begin{rmrk}}
\newcommand{\er}{\end{rmrk}}
\newcommand{\weaklyto}{\to}
\newtheorem{example}[thm]{Example}
\newcommand{\GHto}{\stackrel { \textrm{GH}}{\longrightarrow} }
\newcommand{\Fto}{\stackrel {\mathcal{F}}{\longrightarrow} }
\newcommand{\be}{\begin{equation}}
 \newcommand{\ee}{\end{equation}}
\newcommand{\N}{\mathbb{N}}
\newcommand{\R}{\mathbb{R}}
\newcommand{\One}{{\bf \rm{1}}}
\newcommand{\Z}{\mathbb{Z}}
\newcommand{\diam}{\operatorname{Diam}}
\newcommand{\Fm}{{\mathcal F}}
\newcommand{\set}{{\rm{set}}}
\newcommand{\Lip}{\operatorname{Lip}}
\newcommand{\mass}{{\mathbf M}}
\newcommand{\curr}{{\mathbf M}}         
\newcommand{\rectcurr}{{\mathcal R}}    
\newcommand{\intrectcurr}{{\mathcal I}} 
\newcommand{\intcurr}{{\mathbf I}}      
\newcommand{\vol}{\operatorname{Vol}}
\newcommand{\nmass}{\mathbf N}
\newcommand{\rstr}{\:\mbox{\rule{0.1ex}{1.2ex}\rule{1.1ex}{0.1ex}}\:}
\newcommand{\bdry}{\partial}
\newcommand{\spt}{\operatorname{spt}}
\begin{document}

\title{Intrinsic Flat Arzela-Ascoli Theorems}

\author{Christina Sormani}
\thanks{This material is based upon work supported by the National Science Foundation under Grant No. 0932078 000, while the author was serving as a Visiting Research Professor at the {\em Mathematical Sciences Research Institute} in Berkeley, California, during the Fall of 2013.   The author's research was also funded by an individual research grant NSF-DMS-1309360 and a PSC-CUNY Research Grant.}

\address{CUNY Graduate Center and Lehman College}
\email{sormanic@member.ams.org}

\keywords{}



\begin{abstract}
One of the most powerful theorems in metric geometry is the
Arzela-Ascoli Theorem which provides a continuous limit for
sequences of equicontinuous functions between two compact spaces.  This theorem has been extended by Gromov and 
Grove-Petersen to sequences of functions with varying
domains and ranges where the domains and the ranges respectively
converge in the Gromov-Hausdorff sense to compact limit spaces.
However such a powerful theorem does not hold when the domains and ranges only converge in the intrinsic flat sense due to the possible disappearance of points in the limit.

In this paper two Arzela-Ascoli Theorems are proven for intrinsic flat
converging sequences of manifolds: one for
uniformly Lipschitz functions with fixed range whose domains are converging
in the intrinsic flat sense, and one for sequences of uniformly local isometries
between spaces which are converging in the intrinsic flat sense.   A basic Bolzano-Weierstrass Theorem is proven
for sequences of
points in such sequences of spaces.   In addition it is proven that when a sequence of manifolds has a precompact intrinsic flat limit then the metric completion of
the limit is the Gromov-Hausdorff limit of regions within those manifolds.
Applications and suggested applications 
of these results are described in the final section
of this paper.
\end{abstract}

\maketitle



\section{Introduction}

When studying sequences of Riemannian manifolds, one may use a variety of notions of convergence from $C^{k,\alpha}$ smooth convergence to Gromov-Hausdorff convergence as metric spaces.   One needs to understand whether points and balls in the sequences converge to points and balls in limit spaces.   So one proves Bolzano-Weierstrass theorems to produce converging subsequences of points.   One needs to understand the limits of functions on these spaces and local isometries between these spaces.   So one proves Arzela-Ascoli theorems for sequences of uniformly Lipschitz functions between converging spaces.   Such theorems have been proven for Gromov-Hausdorff convergence by Gromov and by Grove-Petersen
\cite{Gromov-metric} \cite{Gromov-poly} \cite{Grove-Petersen}.   They have been applied in these works as well as that of Cheeger-Colding,
Cheeger-Naber, the author, Wei, and numerous other papers including Perelman's solution of the Poincare Conjecture (c.f. \cite{ChCo-PartI} \cite{Cheeger-Naber-1} 
\cite{Sor-cosmos} \cite{SorWei1} and \cite{Perelman-flow-surgery}).

There are many questions concerning Riemannian manifolds which
cannot be addressed using these relatively strong notions of convergence.   The intrinsic flat convergence is a more flexible
notion allowing a larger class of sequences of manifolds to converge.   Gromov
has proposed that this notion would be natural to study questions
arising in \cite{Gromov-Hilbert-I}.   Lakzian has applied
intrinsic flat convergence to prove continuity of Ricci flow 
through a singularity \cite{Lakzian-Cont-Ricci}.
Dan Lee and the author have
shown intrinsic flat convergence is well adapted to questions 
arising in General Relativity \cite{LeeSormani1}.   
Additional applications of intrinsic flat convergence are 
described in the final section of this paper relating to work of Basilio,
Burago, Ivanov, Ding, Fukaya, Gromov, Huang, Lakzian, LeFloch, Lee, Munn, Portegies, Sinaei and Wei \cite{Basilio-Sormani-1} \cite{Burago-Ivanov-Vol-Tori} \cite{Ding-heat} \cite{Fukaya-1987} \cite{Gromov-Hilbert-I} \cite{Huang-Lee-Graph} \cite{LeFloch-Sormani-1} \cite{Munn-F=GH} \cite{Portegies-F-evalue} \cite{Sinaei-Harmonic} \cite{SorWei1}.

The flexibility of intrinsic flat convergence is that it allows points
to disappear in the limit.   As a consequence, many of the techniques
used to study Gromov-Hausdorff limits including the Arzela-Ascoli Theorem fail to hold when the domains and ranges of the functions
only converge in the intrinsic flat sense (c.f. Remark~\ref{no-Arz-Asc-IF-to-IF}) .    In this paper 
additional hypothesis are provided to produce two Arzela-Ascoli Theorems
[Theorems~\ref{Flat-Arz-Asc} and~\ref{Arz-Asc-Unif-Local-Isom}] as well as a basic Bolzano-Weierstrass Theorem [Theorem~\ref{B-W-BASIC}].   
A new relationship between Gromov-Hausdorff and intrinsic
flat convergence is also proven [Theorem~\ref{flat-to-GH}].   Direct applications of these
theorems are described in the final section of this paper.   

Intrinsic flat convergence was introduced by Wenger and the author in \cite{SorWen2} building upon work of
Ambrosio-Kirchheim in \cite{AK}.   It is defined for oriented Riemannian manifolds, $M^m_j$ with boundary such that
\be\label{V-A}
\vol(M_j)\le V_j \textrm{ and }\vol(\partial M_j)\le A_j.
\ee
The limit spaces obtained under this convergence are called integral current spaces.  They are either countably 
$\mathcal{H}^m$ rectifiable metric spaces of the same dimension as the
sequence or possibly the ${\bf{0}}$ space.   Intrinsic flat limits may exist for
sequences of manifolds with no Gromov-Hausdorff limit \cite{SorWen2}.  

When 
there is a Gromov-Hausdorff limit, $M_j \GHto Y$, and one has uniform bounds on volume and boundary volume, 
\be\label{V-A-unif}
\vol(M_j)\le V_0 \textrm{ and }\vol(\partial M_j)\le A_0,
\ee
then a subsequence has an intrinsic flat limit, $M_{j_i}\Fto X$ where $X\subset Y$ with the restricted distance, $d_X=d_Y$ \cite{SorWen2}.   It is possible that $X$ is the ${\bf{0}}$ space or a strict subset of $Y$
either because the sequence is collapsing or due to cancellation (see examples in \cite{SorWen2}).   See Section~\ref{sect-background}
for a review.   

This paper focuses on sequences of oriented Riemannian manifolds, $M_j^m$
satisfying (\ref{V-A}), or more generally integral current spaces
satisfying a smiliar condition, which 
converge in the intrinsic flat sense. 
The paper begins with the definition of converging and disappearing sequences of points [Definitions~\ref{point-conv} and~\ref{point-Cauchy}] and a proof that
diameter is lower semicontinuous [Theorem~\ref{diam-semi}].   
Viewing balls
within integral current spaces as integral current spaces themselves [Lemma~\ref{lem-ball}]
it is proven that, for almost every radius, balls around converging points have subsequences which converge to balls about their limit points
[Lemma~\ref{balls-converge}].     The necessity of taking a subsequence is
shown in Example~\ref{ex-balls-converge}.
If a sequence of points disappears, the 
balls of small radius about those points converge to the $\bf{0}$ space [Lemma~\ref{balls-converge}].   Lemma~\ref{rescaling} examines how the intrinsic flat distance may be estimated when the spaces are rescaled.   Although technical, these lemmas are key
steps in the subsequent theorems.
  
It is shown in Theorem~\ref{flat-to-GH} that if Riemannian manifolds $M_i$ converge in the intrinsic flat sense to a nonzero precompact limit space, $M$, 
then there are open submanifolds $N_i \subset M_i$ such that 
$N_i \GHto M$.  This theorem and Remark~\ref{flat-to-GH-r} also describe the volumes of these submanifolds as well as what happens when $M_i$ are integral current spaces.   Section~\ref{sect-flat-to-GH} also contains a few
related open questions within remarks concerning possible extensions and applications of this theorem.

Theorem~\ref{Flat-Arz-Asc} is the first Intrinsic Flat Arzela Ascoli Theorem.   It
states that if a sequence of functions,
$F_i: M_i\to W$ where $M_i \Fto M_\infty$ and $W$ is compact and
$\Lip(F_i)\le K$, then there is a converging subsequence $F_i \to F_\infty$
where $F_\infty: M_\infty \to W$ also has $\Lip(F_\infty)\le K$.   A
precise description as to exactly how $F_i \to F_\infty$ is given.   
Remarks~\ref{no-Arz-Asc-IF-to-IF} and~\ref{Arz-Asc-IF-to-GH} concern 
the impossibility and possibility of extending this theorem 
to allow the ranges to
converge in the intrinsic flat and Gromov-Hausdorff sense respectively.

Theorem~\ref{B-W-BASIC} is an Intrinsic Flat Bolzano-Weierstrass theorem
for points $p_i\in M_i$ such that $M_i\Fto M_\infty$.
Since it is known that points may disappear in the limit [Remark~\ref{rmrk-disappear}], it is necessary to add a condition to obtain a subsequence with a limit point $p_\infty$.   In Theorem~\ref{B-W-BASIC}, the
extra condition is that for almost every sufficiently small radius there is a uniform bound on the intrinsic flat distance between the balls about $p_i$
and $\bf{0}$.   Remark~\ref{rmrk-ball-bound} discusses how one can
obtain such a bound when needed.

Theorem~\ref{Arz-Asc-Unif-Local-Isom}
is the second Intrinsic Flat Arzela-Ascoli Theorem proven here.   In this theorem the domains and ranges of the functions converge in the intrinsic flat sense and have uniform upper bounds as in (\ref{V-A-unif}).
  The functions are assumed to be local isometries which are isometries on balls of fixed
radius.   It is shown that a subsequence of the functions converges to
a limit function which is also a local isometry.   If the functions are surjective,
then so is the limit.   The case where the limit spaces are possibly the $\bf{0}$
space is also considered.     Remark~\ref{r-biLip} discusses a possible extension of this
theorem to uniformly locally bi-Lipschitz functions or more simply uniformly bi-Lipschitz functions.   Remark~\ref{r-AA-ULI} discusses the necessity of
various conditions in Theorem~\ref{Arz-Asc-Unif-Local-Isom}.

In Section~\ref{sect-example} an example is presenting showing how these theorems can be
applied to prove certain sequences of Riemannian manifolds have no
intrinsic flat limit.  Additional applications to construct examples which
do have specific limits will appear in joint work with Basilio \cite{Basilio-Sormani-1}.

Section~\ref{sect-Appl} includes remarks describing the possible 
additional applications
of the various theorems in this paper.   In
particular one may be able to apply Theorem~\ref{Arz-Asc-Unif-Local-Isom}
to answer a question
posed by Gromov in \cite{Gromov-Hilbert-I} concerning the intrinsic flat
limits of tori whose universal covers have almost maximal volume growth in
the sense described by Burago-Ivanov in \cite{Burago-Ivanov-Vol-Tori}.
See Remark~\ref{r-AA-BIG}.    Additional possible
applications of Theorem~\ref{Arz-Asc-Unif-Local-Isom} to extend work of
the author with Wei are described in Remarks~\ref{r-Wei-1} and~\ref{r-Wei-2}.
It may also be possible to apply Theorem~\ref{Flat-Arz-Asc} to study the limits of harmonic functions, eigenfunctions and heat kernels.  See Remark~\ref{r-funct}.   In Remark~\ref{r-Ricci}, it is described how one may be
able to apply Theorem~\ref{BW-BASIC} to prove that the intrinsic flat 
and Gromov-Hausdorff limits of Riemannian manifolds with uniform lower
Ricci curvature bounds agree extending a theorem of the author with Wenger
in \cite{SorWen1}.   Finally there are three remarks discussing how various theorems in the paper may be applied to a variety
of questions and conjectures related to questions in General Relativity.

The author would like to thank Blaine Lawson (SUNYSB) for suggesting that the basic properties of
intrinsic flat convergence should appear in their own paper 
separate from the
more technical theorems involving the Gromov Filling Volume
which appear in \cite{Sormani-properties}\footnote{\cite{Sormani-properties} is under revision and all theorems with new proofs appearing here will be removed from that paper.}.    The author is
also indebted to
Jacobus Portegies (NYU) and Raquel Perales (SUNYSB) for their careful reading of the first version of this
paper and their extensive feedback.  


\section{Background}\label{sect-background}

Here the key definitions and theorems applied in this paper
are reviewed.
Please keep in mind that this is by no means a complete introduction to
Gromov-Hausdorff convergence and Intrinsic Flat convergence.   
Only the notions that are applied in this paper are reviewed.   In fact, the primary reason for combining Theorems~\ref{flat-to-GH}, ~\ref{Flat-Arz-Asc}, ~\ref{B-W-BASIC}, and ~\ref{Arz-Asc-Unif-Local-Isom} together into this paper is because these four theorems can be proven using the same background material.   Other related theorems appearing in \cite{Sormani-properties} all
require additional results of Gromov and Ambrosio-Kirchheim.

Those who have already studied the notion of Intrinsic Flat convergence in the initial paper by the author with Wenger \cite{SorWen2}, should still review Subsections~\ref{subsect-GH}, ~\ref{subsect-Slicing} and~\ref{subsect-Iballs} which cover material not presented there.   Those who have never studied Gromov-Hausdorff or Intrinsic Flat convergence will find
the entire background section useful as a very brief but self contained introduction to the subjects.   As the author sees no reason to restate theorems, definitions and remarks, some of these statements have been repeated exactly as stated in prior background sections written by the author elsewhere.

\subsection{A Review of Gromov-Hausdorff Convergence}\label{subsect-GH}

Throughout this paper, Gromov's definition of an isometric embedding will be used:

\begin{defn}
A map $\varphi: X \to Y$ between metric spaces, $(X, d_X)$ and $(Y, d_Y)$,
is an isometric embedding iff it is distance preserving:
\be
d_Y(\varphi(x_1), \varphi(x_2)) = d_X(x_1, x_2) \qquad \forall x_1, x_2 \in X.
\ee
\end{defn}

Observe that this does not agree with the Riemannian notion of an isometric embedding.

The following is one of the more beautiful definitions of the Gromov-Hausdorff distance:

\begin{defn}[Gromov]\label{defn-GH} 
The Gromov-Hausdorff distance between two 
compact metric spaces $\left(X, d_X\right)$ and $\left(Y, d_Y\right)$
is defined as
\be \label{eqn-GH-def}
d_{GH}\left(X,Y\right) := \inf  \, d^Z_H\left(\varphi\left(X\right), \psi\left(Y\right)\right)
\ee
where $Z$ is a complete metric space, and $\varphi: X \to Z$ and $\psi:Y\to Z$ are
isometric embeddings and where the Hausdorff distance in $Z$ is defined as
\be
d_{H}^Z\left(A,B\right) = \inf\{ \epsilon>0: A \subset T_\epsilon\left(B\right) \textrm{ and } B \subset T_\epsilon\left(A\right)\}.
\ee
\end{defn}

Gromov proved that this is indeed a distance on compact metric spaces
in the sense that $d_{GH}\left(X,Y\right)=0$
iff there is an isometry between $X$ and $Y$ in \cite{Gromov-metric}.   
Gromov proved the following embedding theorem in \cite{Gromov-poly}:

\begin{thm}[Gromov] \label{Gromov-Z}
If a sequence of compact metric spaces, $X_j$,
converges in the Gromov-Hausdorff sense to a compact metric space $X_\infty$, 
\be
X_j \GHto X_\infty
\ee
then in fact there is a compact metric space, $Z$, and isometric embeddings $\varphi_j: X_j \to Z$ for $j\in \{1,2,...,\infty\}$ such that
\be
d_H^Z\left(\varphi_j(X_j),\varphi_\infty(X_\infty)\right) \to 0.
\ee
\end{thm}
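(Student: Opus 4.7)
The plan is to realize all the $X_j$ and $X_\infty$ simultaneously inside one metric space by gluing together the auxiliary ambient spaces that witness the distances $d_{GH}(X_j, X_\infty)$. By the definition of Gromov--Hausdorff convergence, I can choose $\epsilon_j \to 0$ and, for each $j$, a complete metric space $(Z_j,d_j)$ with isometric embeddings $\iota_j \colon X_j \to Z_j$ and $\kappa_j \colon X_\infty \to Z_j$ such that $d_H^{Z_j}(\iota_j(X_j),\kappa_j(X_\infty)) < \epsilon_j$. It is harmless to replace $Z_j$ by the closed subset $\iota_j(X_j) \cup \kappa_j(X_\infty)$, which is compact because $X_j$ and $X_\infty$ are.

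Next, I would take the abstract disjoint union $\tilde Z := X_\infty \sqcup \bigsqcup_j X_j$ and equip it with the distance that (i) restricts to the original distance inside each $X_j$ and inside $X_\infty$, (ii) equals $d_j(\iota_j(x), \kappa_j(y))$ between $x \in X_j$ and $y \in X_\infty$, and (iii) for $x \in X_i$ and $y \in X_j$ with $i \ne j$ is given by
\[
d_Z(x,y) \; := \; \inf_{z \in X_\infty} \bigl( d_i(\iota_i(x),\kappa_i(z)) + d_j(\kappa_j(z),\iota_j(y)) \bigr).
\]
The routine verification is that this defines a pseudometric: symmetry is immediate, and the triangle inequality in each case reduces to the triangle inequality inside the appropriate $Z_j$ combined with taking an infimum over $X_\infty$. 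After passing to the usual metric quotient, the natural inclusions $\varphi_j \colon X_j \to Z$ and $\varphi_\infty \colon X_\infty \to Z$ are isometric embeddings, and by construction one obtains $d_H^Z(\varphi_j(X_j),\varphi_\infty(X_\infty)) \le \epsilon_j \to 0$.

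The principal technical point is then to show that $Z$ is compact. I would argue that $Z$ is totally bounded: given $\delta > 0$, pick a finite $\delta$-net $\{p_1,\dots,p_N\} \subset X_\infty$. Only finitely many indices satisfy $\epsilon_j \ge \delta$, and for each such $j$ the compactness of $X_j$ supplies its own finite $\delta$-net. For every other $j$, the Hausdorff bound implies that each $x \in X_j$ lies within $\epsilon_j < \delta$ of some $q \in X_\infty$, hence within $2\delta$ of some $p_k$, so the single net $\{p_k\}$ serves all sufficiently large $j$. The union of these finitely many finite sets is a finite $2\delta$-net in $Z$. Completeness of $Z$ follows since each compact summand is complete and cross-component Cauchy sequences are forced into $X_\infty$ by the shrinking Hausdorff bounds, so $Z$ itself is compact.

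The main obstacle I anticipate is the bookkeeping for the pseudometric-to-metric passage, in particular checking that the quotient does not accidentally identify points from distinct $X_j$'s. This is handled by observing that under (iii) a zero distance between $x \in X_i$ and $y \in X_j$ would force, via a minimizing sequence in $X_\infty$, a common limit point in both $\iota_i(X_i) \cap \kappa_i(X_\infty)$ and $\iota_j(X_j) \cap \kappa_j(X_\infty)$, which is ruled out by the disjoint-image convention in each $Z_j$. Once this and the triangle inequality are confirmed, the Hausdorff convergence statement and the compactness argument above give the theorem.
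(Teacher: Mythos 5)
The paper does not itself prove this statement---it is cited as a classical embedding theorem of Gromov (with the reference to \cite{Gromov-poly})---so there is no in-paper argument to compare against. Your proof is the standard construction (glue the witness spaces $Z_j$ along the common copies of $X_\infty$) and it is essentially correct. The infimum-over-$X_\infty$ formula in~(iii) is compatible with~(ii): if $y\in X_\infty$, the infimum in~(iii) is attained at $z=y$ because $d_i(\iota_i(x),\kappa_i(z)) + d_{X_\infty}(z,y)=d_i(\iota_i(x),\kappa_i(z)) + d_i(\kappa_i(z),\kappa_i(y))\ge d_i(\iota_i(x),\kappa_i(y))$. Every mixed case of the triangle inequality reduces, exactly as you say, to the triangle inequality inside the appropriate $Z_k$ together with moving a common $d_{X_\infty}$-term through the isometric embeddings $\kappa_k$. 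Your total-boundedness argument (one net for $X_\infty$, finitely many nets for the $X_j$ with $\epsilon_j\ge\delta$, the $X_\infty$-net handling all large $j$) and your completeness argument (a Cauchy sequence visiting infinitely many components has $j_n\to\infty$ after discarding a subsequence confined to one compact summand, and its $\epsilon_{j_n}$-approximations in $X_\infty$ are Cauchy there) both go through, so $Z$ is compact, and the Hausdorff estimate $d_H^Z(\varphi_j(X_j),\varphi_\infty(X_\infty))\le\epsilon_j$ is immediate from~(ii).

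The one thing you should rework is the final paragraph about the pseudometric-to-metric passage. There is no ``disjoint-image convention'' you can impose in general: for instance if $X_j=X_\infty$ for all $j$, the natural choice of $Z_j$ is $X_\infty$ itself with $\iota_j=\kappa_j=\mathrm{id}$, in which case the images coincide, and the quotient really does identify $x\in X_i$ with the corresponding $y\in X_j$. But this is \emph{not} a problem for the theorem. All that is required is that each $\varphi_j$ be an isometric embedding, and this holds because the pseudometric restricts to the original genuine metric on each summand $X_j$ (so distinct points of a fixed $X_j$ are never identified). The statement does not require the images $\varphi_j(X_j)$ to be pairwise disjoint, so identifications across components are harmless. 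You should simply delete that worry rather than try to rule it out with a convention that cannot, in general, be arranged.
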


This theorem allows one to define converging sequences of
points: 

\begin{defn}\label{Gromov-points}
One says that $x_j \in X_j$ converges to $x_\infty \in X_\infty$,
if there is a common space $Z$ as in Theorem~\ref{Gromov-Z}
such that $\varphi_j(x_j) \to \varphi_\infty(x)$ as points in
$Z$.  If one discusses the limits of multiple sequences of points
then one uses a common $Z$ and the same collection of $\varphi_j$
to determine the convergence.
This avoids difficulties arising from isometries in the limit space.
Then one immediately has
\be
\lim_{j\to\infty} d_{X_j}(x_j, x_j')=d_{X_\infty}(x_\infty, x_\infty')
\ee
whenever $x_j \to x_\infty$ and $x_j'\to x_\infty'$ via
a common $Z$.
\end{defn}

One can apply Theorem~\ref{Gromov-Z} to see that for any $x_\infty\in X_\infty$ there exists $x_j\in X_j$ converging to $x_\infty$ in this sense.   Also
observe that whenever $x_j$ converges to $x_\infty$ in this sense, 
\be
d_{GH}\left(B(x_j,r), B(x_\infty, r)  \right)\le d_H^Z\left(B(\varphi_j(x_j),r), B(\varphi_\infty(x_\infty),r)\right) \to 0 \qquad \forall r>0
\ee
if one views the balls $B(x_j,r)\subset X_j$ as metric spaces endowed with
the restricted metric, $d_{X_j}$, from $X_j$.   See the appendix of joint work of the author with Wei \cite{SorWei2} for a theorem concerning the induced length metrics.   
Theorem~\ref{Gromov-Z} also implies the following basic Bolzano-Weierstrass Theorem:

\begin{thm}[Gromov] \label{Gromov-B-W}
Given compact metric spaces, $X_j \GHto X_\infty$, and $x_j\in X_j$ then a 
subsequence also denoted $x_j$ 
converges to a point $x_\infty\in X_\infty$ in the sense described above.
\end{thm}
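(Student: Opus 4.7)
The plan is to reduce the statement to the ordinary Bolzano-Weierstrass property of a single compact metric space by first putting all of the $X_j$ into a common ambient space. Concretely, I would first apply Theorem~\ref{Gromov-Z} to obtain a compact metric space $Z$ and isometric embeddings $\varphi_j : X_j \to Z$ for $j \in \{1,2,\ldots,\infty\}$ with
\be
d_H^Z\bigl(\varphi_j(X_j),\varphi_\infty(X_\infty)\bigr) \to 0.
\ee
This is the step that replaces the abstract Gromov-Hausdorff convergence by honest Hausdorff convergence inside one compact space.

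Next, I would consider the sequence $z_j := \varphi_j(x_j) \in Z$. Since $Z$ is compact, the ordinary Bolzano-Weierstrass theorem in $Z$ gives a subsequence (still denoted $z_j$) converging in $Z$ to some $z_\infty \in Z$. It remains to show that $z_\infty$ lies in $\varphi_\infty(X_\infty)$, since then $z_\infty = \varphi_\infty(x_\infty)$ for a unique $x_\infty \in X_\infty$, and by Definition~\ref{Gromov-points} the sequence $x_j$ converges to $x_\infty$ in the Gromov-Hausdorff sense using exactly this common $Z$ and these embeddings.

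To see that $z_\infty \in \varphi_\infty(X_\infty)$, note that for each $j$,
\be
\dist\bigl(z_j,\varphi_\infty(X_\infty)\bigr) \le d_H^Z\bigl(\varphi_j(X_j),\varphi_\infty(X_\infty)\bigr) \to 0,
\ee
so $\dist(z_\infty,\varphi_\infty(X_\infty)) = 0$ by continuity of the distance function. Since $X_\infty$ is compact and $\varphi_\infty$ is an isometric embedding (hence continuous), $\varphi_\infty(X_\infty)$ is closed in $Z$, so $z_\infty \in \varphi_\infty(X_\infty)$ as required.

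There is no genuine obstacle here; the only subtlety is being careful that convergence of points in the sense of Definition~\ref{Gromov-points} must be verified using a single common $Z$ and a single collection of embeddings $\varphi_j$, rather than different ones for different subsequences. This is precisely why one invokes Theorem~\ref{Gromov-Z} once at the outset and then extracts the subsequence inside the fixed compact ambient space $Z$.
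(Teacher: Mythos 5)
Your proof is correct and is precisely the argument the paper has in mind: the paper gives no explicit proof but states that Theorem~\ref{Gromov-Z} ``also implies'' this Bolzano-Weierstrass statement, and your three steps (embed via Theorem~\ref{Gromov-Z}, extract a convergent subsequence in the compact ambient $Z$, and verify the limit lies in the closed set $\varphi_\infty(X_\infty)$ via the Hausdorff convergence) make that implication explicit. The closing remark about using a single common $Z$ and fixed embeddings is the right subtlety to flag, matching the caveat built into Definition~\ref{Gromov-points}.
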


In particular, one sees that
\be \label{GH-diam}
X_j \to X_\infty \,\, \implies \,\,\lim_{j\to \infty}\diam(X_j)=\diam(X_\infty).
\ee

Gromov's embedding theorem can also be applied in combination with other extension theorems to obtain the following Gromov-Hausdorff Arzela-Ascoli Theorem.   See also the appendix of a paper of Grove-Petersen \cite{Grove-Petersen} for a detailed proof and prior work of the author for a more general statement \cite{Sor-cosmos}.

\begin{thm}[Gromov] \label{Gromov-Arz-Asc}
Given compact metric spaces $X_j \GHto X_\infty$ and $Y_j \to Y_\infty$
and equicontinuous functions $f_j: X_j \to Y_j$ in the sense that
\be
\forall \epsilon>0 \,\,\exists \delta_\epsilon>0\textrm{ such that }
d_{X_j}(x,x')< \delta_\epsilon \,\implies \,
d_{Y_j}(f_j(x), f_j(x'))\le \epsilon.
\ee
then there exists a subsequence, also denoted $f_j: X_j \to Y_j$
which converges to a continuous function $f_\infty: X_\infty \to Y_\infty$
in the sense that there exists common compact metric spaces $Z, W,$
and isometric embeddings $\varphi_j: X_j \to Z$, $\psi_j: Y_j \to W$
such that
\be
\lim_{j\to \infty} \psi_j(f_j(x_j)) = \psi_\infty(f_\infty(x_\infty))
 \textrm{ whenever } \lim_{j\to \infty}\varphi_j(x_j)=\varphi_\infty(x_\infty). 
\ee
Furthermore, if $\Lip(f_j) \le K$ then $\Lip(f_\infty)\le K$.
\end{thm}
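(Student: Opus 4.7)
The plan is to use Gromov's embedding theorem twice to reduce the problem to ordinary Hausdorff limits in fixed compact ambient spaces, and then to carry out a standard diagonal Arzela-Ascoli argument on a countable dense set. First I would invoke \thmref{Gromov-Z} to obtain a compact metric space $Z$ with isometric embeddings $\varphi_j:X_j\to Z$ and $\varphi_\infty:X_\infty\to Z$ so that $d_H^Z(\varphi_j(X_j),\varphi_\infty(X_\infty))\to 0$, and similarly a compact $W$ with isometric embeddings $\psi_j:Y_j\to W$ satisfying $d_H^W(\psi_j(Y_j),\psi_\infty(Y_\infty))\to 0$. From here I identify the spaces with their images and drop the $\varphi,\psi$ from the notation.

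Next I would choose a countable dense subset $\{x^{(k)}\}_{k\in\N}$ of $X_\infty$. Using the Hausdorff approximation in $Z$, for each $k$ I can pick a sequence $x_j^{(k)}\in X_j$ with $x_j^{(k)}\to x^{(k)}$ in $Z$. Since $W$ is compact, the sequence $f_j(x_j^{(1)})$ has a convergent subsequence; a Cantor diagonal extraction produces a single subsequence (still indexed by $j$) such that $f_j(x_j^{(k)})\to y^{(k)}$ in $W$ for every $k$, and by the Hausdorff convergence $Y_j\to Y_\infty$ the limits $y^{(k)}$ lie in $Y_\infty$. I define $f_\infty(x^{(k)}):=y^{(k)}$ on the dense set; the equicontinuity hypothesis, combined with the fact that $d_{X_j}(x_j^{(k)},x_j^{(l)})\to d_{X_\infty}(x^{(k)},x^{(l)})$ and $d_{Y_j}(f_j(x_j^{(k)}),f_j(x_j^{(l)}))\to d_{Y_\infty}(y^{(k)},y^{(l)})$ in the common spaces, shows that $f_\infty$ satisfies the same modulus of continuity on the dense set. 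Hence $f_\infty$ extends uniquely to a continuous map $f_\infty:X_\infty\to Y_\infty$, and the extension inherits the same modulus of continuity, so in particular $\Lip(f_\infty)\le K$ when $\Lip(f_j)\le K$.

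The main work is verifying the claimed mode of convergence: whenever $x_j\in X_j$ converges in $Z$ to some $x_\infty\in X_\infty$, one must show $f_j(x_j)\to f_\infty(x_\infty)$ in $W$. Given $\epsilon>0$, pick $x^{(k)}$ with $d_{X_\infty}(x_\infty,x^{(k)})<\delta_\epsilon/3$. Then for $j$ large $d_{X_j}(x_j,x_j^{(k)})<\delta_\epsilon$ by the triangle inequality in $Z$ together with $x_j\to x_\infty$ and $x_j^{(k)}\to x^{(k)}$, so equicontinuity gives $d_{Y_j}(f_j(x_j),f_j(x_j^{(k)}))\le\epsilon$. Combining with $f_j(x_j^{(k)})\to f_\infty(x^{(k)})$ in $W$ and the continuity of $f_\infty$ at $x_\infty$ closes the argument after a standard $\epsilon/3$ triangle estimate.

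The main obstacle is really bookkeeping rather than any deep analytic difficulty: one must be careful that the compact ambient spaces $Z$ and $W$ are chosen once and for all before the diagonal extraction, and that convergence statements like $d_{X_j}(x_j,x_j^{(k)})\to d_{X_\infty}(x_\infty,x^{(k)})$ hold because distances are measured using the same isometric embeddings into $Z$ (as in \defref{Gromov-points}). Once the ambient spaces are fixed, the Arzela-Ascoli argument runs as it would in a single metric space, and the Lipschitz estimate passes to the limit by simply taking limits of $d_{Y_j}(f_j(x_j),f_j(x_j'))\le K\,d_{X_j}(x_j,x_j')$ along the approximating sequences provided by $Z$.
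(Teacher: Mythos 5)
Your proposal is correct, and it follows exactly the route that the paper indicates without carrying out in detail: the paper states \thmref{Gromov-Arz-Asc} as a background result, remarking only that it follows from Gromov's embedding theorem combined with extension arguments, and defers the full proof to the appendix of Grove--Petersen. Your argument --- fix $Z$ and $W$ once via \thmref{Gromov-Z}, pick approximating sequences over a countable dense set of $X_\infty$, diagonalize using compactness of $W$, check that the limits land in $Y_\infty$ because $d_H^W(\psi_j(Y_j),\psi_\infty(Y_\infty))\to 0$ and $Y_\infty$ is closed, transfer the modulus of continuity to the limit by passing to the limit in the inequality $d_{Y_j}(f_j(x_j^{(k)}),f_j(x_j^{(l)}))\le\epsilon$ along the approximating sequences, extend by uniform continuity, and verify the mode of convergence by an $\epsilon/3$ estimate --- is precisely that standard Grove--Petersen style proof. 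The one point worth emphasizing (which you correctly flag) is that all distance convergences such as $d_{X_j}(x_j^{(k)},x_j^{(l)})\to d_{X_\infty}(x^{(k)},x^{(l)})$ are meaningful only because the embeddings $\varphi_j$, $\psi_j$ into the fixed $Z$, $W$ are chosen before the diagonal extraction and held fixed throughout, exactly in the spirit of \defref{Gromov-points}. The Lipschitz conclusion then falls out by taking limits in $d_{Y_j}(f_j(x_j),f_j(x_j'))\le K\,d_{X_j}(x_j,x_j')$, as you say. No gaps.
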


Observe in particular that if $y_j, y'_j\in Y_j$ converge to 
$y_\infty, y'_\infty \in Y_\infty$, where $Y_j \GHto Y_\infty$, and
$\gamma_j:[0,1]\to Y_j$ are minimizing geodesics from $y_j$ to $y'_j$,
then a subsequence converges to $\gamma_\infty: [0,1]\to Y_\infty$
which one can then show is a minimizing geodesic between
$x_\infty$ and $x'_\infty$.   Thus geodesic metric spaces converge to
geodesic metric spaces.

All these theorems are key ingredients in the many important works applying Gromov-Hausdorff convergence to better understand Riemannian Geometry.   
See the classic textbook of Burago-Burago-Ivanov \cite{BBI}, the work of Cheeger-Colding \cite{ChCo-PartI} and the work of the author with Wei \cite{SorWei1}.

In this paper these theorems are extended, as far as possible, in the setting where one only has intrinsic flat convergence.   Of course it is known that these theorems do not hold in their full strength in the setting where sequences of Riemannian manifolds are converging in the intrinsic flat sense.   Examples in joint work of the author with Wenger in \cite{SorWen2} demonstrate that (\ref{GH-diam}) fails in general and that geodesics need not converge to geodesics.   Nevertheless there are versions of these
theorems which do hold.

\vspace{.4cm}
\subsection{Review of Ambrosio-Kirchheim Currents on Metric Spaces}

In order to rigorously review the definition of the intrinsic flat distance, one needs a few key results of Ambrosio-Kirchheim.   These results will also be applied later to prove the main theorems of the paper.

In \cite{AK}, Ambrosio-Kirchheim extend Federer-Fleming's notion of integral
currents on Euclidean space to an arbitrary complete metric space, $Z$.   In Federer-Fleming, currents were defined as linear functionals on differential
forms \cite{FF}. This approach extends naturally
to smooth manifolds but not to complete metric spaces which do not have differential
forms.  In the place of differential forms, Ambrosio-Kirchheim use
DiGeorgi's
$m+1$ tuples, $\omega\in \mathcal{D}^m(Z)$,
\be
\omega=f\pi=\left(f,\pi_1 ...\pi_m\right) \in \mathcal{D}^m(Z)
\ee 
where
$f: X \to \R$ is a bounded Lipschitz function and
$\pi_i: X \to \R$ are Lipschitz.  

In \cite{AK} Definitions 2.1, 2.2, 2.6 and 3.1, an $m$ dimensional
current $T\in \curr_m(Z)$ is defined.  Here these are combined into a single
definition:

\begin{defn}
On a complete metric space, $Z$,  an
$m$ dimensional {\bf \em current}, denoted
$T\in \curr_m(Z)$, is a real valued
{\em multilinear functional} on $\mathcal{D}^m(Z)$, with the
following three required properties:
\vspace{.2cm}

\noindent
i) {\em Locality}:
$$ \label{def-locality}
T(f, \pi_1,...\pi_m)=0 \textrm{ if }\exists i\in \{1,...m\} \textrm{ s.t. }\pi_i
\textrm{ is constant on a nbd of } \{f\neq0\}.
$$
ii) {\em Continuity}:
$$
\textrm{Continuity of $T$ with respect to the ptwise convergence
of $\pi_i$ such that $\Lip(\pi_i)\le 1$.}
$$
iii) {\em Finite mass}: 
$$
\exists \textrm{ finite Borel } \mu \,\,\,s.t. \,\,\,
|T(f,\pi_1,...\pi_m)| \le \prod_{i=1}^m \Lip(\pi_i)  \int_Z |f| \,d\mu \,\,\, \forall (f,\pi_1,...\pi_m)\in \mathcal{D}^m(Z).
$$
\end{defn}


In \cite{AK} Definition 2.6 Ambrosio-Kirchheim introduce
their mass measure:

\begin{defn}  \label{defn-mass}  
The mass measure $\|T\| $
of a current $T\in \curr_m(Z)$, is the smallest Borel measure $\mu$ such that
\be \label{def-measure-T}
  \Big|T\left(f,\pi\right)\Big|    \le     \int_X |f| d\mu
  \qquad  \forall  \, \left(f,\pi\right) \textrm{ where } \Lip\left(\pi_i\right)\le 1.
\ee  
%
The mass of $T$ is defined
\be \label{def-mass-from-current}
M\left(T\right) = || T || \left(Z\right) = \int_Z \, d\| T\|.
\ee
\end{defn}

In particular
\be \label{eqn-mass}
\Big| T(f,\pi_1,...\pi_m) \Big| \le \mass(T) |f|_\infty \Lip(\pi_1) \cdots \Lip(\pi_m).
\ee


Ambrosio-Kirchheim then define restrictions and push forwards:

\begin{defn} \cite{AK}[Defn 2.5]   \label{defn-rstr}
The {\em restriction} $T\rstr \omega\in \curr_m(Z)$
of a current $T\in M_{m+k}(Z)$ by a $k+1$ tuple
 $\omega=(g,\tau_1,...\tau_k)\in \mathcal{D}^k(Z)$:
\be
(T\rstr\omega)(f,\pi_1,...\pi_m):=T(f\cdot g, \tau_1,...\tau_k, \pi_1,...\pi_m).
\ee
Given a Borel set, $A$, 
\be
T\rstr A := T\rstr \omega
\ee
where $\omega= \One_A \in \mathcal{D}^0(Z)$ is the indicator
function of the set.  In this case,
\be
\mass(T\rstr \omega) = ||T||(A).
\ee
\end{defn}

\begin{defn}\label{defn-push}
Given a Lipschitz map $\varphi:Z\to Z'$, the {\em push
forward} of a current $T\in \curr_m(Z)$
to a current $\varphi_\# T \in \curr_m(Z')$ is given in \cite{AK}[Defn 2.4] by
\be \label{def-push-forward}
\varphi_\#T(f,\pi_1,...\pi_m):=T(f\circ \varphi, \pi_1\circ\varphi,...\pi_m\circ\varphi).
\ee
\end{defn}

\begin{rmrk} \label{rstr-push} 
Observe that
\be
(\varphi_\#T) \rstr (f, \pi_1,...\pi_k))= \varphi_\#(T \rstr (f\circ \varphi, \pi_1\circ\varphi,...\pi_k\circ\varphi) )
\ee
and
\be
(\varphi_\#T )\rstr A = (\varphi_\#T) \rstr (\One_A)
=\varphi_\# (T \rstr (\One_A\circ \varphi)) = \varphi_\#(T \rstr \varphi^{-1}(A)).
\ee
\end{rmrk}

In (2.4) \cite{AK}, Ambrosio-Kirchheim show that
\be  \label{mass-push}
||\varphi_\#T|| \le [\Lip(\varphi)]^m \varphi_\# ||T||,
\ee
so that when $\varphi$ is an isometric
embedding 
\be \label{lem-push-mass}
||\varphi_\#T||=\varphi_\#||T|| \textrm{ and }
\mass(T)=\mass(\varphi_\#T).
\ee

The simplest example of a current is:

\begin{example}\label{basic-current-pushed}
If one has a bi-Lipschitz map, $\varphi:\R^m \to Z$, and 
a Lebesgue function $h\in L^1(A,\Z)$ where $A\subset\R^m$ is Borel,
then $\varphi_\# \lbrack h \rbrack \in \curr_m(Z)$ an 
$m$ dimensional current in $Z$.  Note that
\be
\varphi_\# \lbrack h \rbrack (f,\pi_1,...\pi_m)=
\int_{A \subset \R^m} (h\circ \varphi )(f\circ\varphi) \, 
d(\pi_1\circ \varphi) \wedge \dots \wedge d(\pi_m\circ\varphi)
\ee
where $d(\pi_i\circ\varphi)$ is well defined almost everywhere
by Rademacher's Theorem.  Here the mass measure is
\be
||\lbrack h \rbrack ||= h \,d\mathcal{L}_m
\ee
and the mass is
\be
\mass(\lbrack h \rbrack ) =\int_A h \,d\mathcal{L}_m.
\ee
\end{example}

In \cite{AK}[Theorem 4.6] Ambrosio-Kirchheim define the following set associated with any
integer rectifiable current:   

\begin{defn} \label{defn-set}
The (canonical) set of a current, $T$,
 is the collection of points in $Z$ with positive lower density:
\be \label{def-set-current}
\set\left(T\right)= \{p \in Z: \Theta_{*m}\left( \|T\|, p\right) >0\},
\ee
where the definition of lower density is
\be \label{eqn-lower-density}
\Theta_{*m}\left( \mu, p\right) =\liminf_{r\to 0} \frac{\mu(B_p(r))}{\omega_m r^m}.
\ee
\end{defn}

In \cite{AK} Definition 4.2 and Theorems 4.5-4.6, an integer rectifiable current is defined using the
Hausdorff measure, $\mathcal{H}^m$:

\begin{defn}\label{int-rect-curr}
Let $m\ge 1$.   A current, $T\in \mathcal{D}_m(Z)$, is rectifiable if $\set(T)$ is countably
$\mathcal{H}^m$ rectifiable and if $||T||(A)=0$ for any set
$A\subset Z$ whose Hausdorff measure is zero, $\mathcal{H}^k(A)=0$.   One writes $T\in \rectcurr_m(Z)$.

One says $T\in \rectcurr_m(Z)$ is integer rectifiable,
denoted $T\in \intrectcurr_m(Z)$, if for any $\varphi\in \Lip(Z, \R^m)$ and any open set $A\subset Z$, then 
\be
\exists\, \theta \in \mathcal{L}^1(\R^k,Z) \,\,\,s.t.\,\,\,
\varphi_{\#}(T\rstr A)=\lbrack \theta \rbrack.
\ee   
In fact, $T \in \intcurr_m(Z)$
iff it has a parametrization.  A parametrization $\left(\{\varphi_i\}, \{\theta_i\}\right)$ of an integer rectifiable current $T\in \intrectcurr^m\left(Z\right)$ is a collection of
bi-Lipschitz maps $\varphi_i:A_i \to Z$ with $A_i\subset\R^m$ precompact
Borel measurable and with pairwise disjoint images and
weight functions $\theta_i\in L^1\left(A_i,\N\right)$ such that
\be\label{param-representation}
T = \sum_{i=1}^\infty \varphi_{i\#} \Lbrack \theta_i \Rbrack \quad\text{and}\quad \mass\left(T\right) = \sum_{i=1}^\infty \mass\left(\varphi_{i\#}\Lbrack \theta_i \Rbrack\right).
\ee
A $0$ dimensional rectifiable current is defined by the existence of
countably many distinct points, $\{x_i\}\in Z$,  weights $\theta_i \in \R^+$
and orientation, $\sigma_i \in \{-1,+1\}$
such that 
\be \label{0-param-representation}
T(f)=\sum_h \sigma_i \theta_i f(x_i) \qquad \forall f \in \mathcal{B}^\infty(Z).
\ee
where $\mathcal{B}^\infty(Z)$ is the class of bounded Borel
functions on $Z$ and where
\be
\mass(T)=\sum_h \theta_i< \infty
\ee
If $T$ is integer rectifiable $\theta_i \in \Z^+$, so the sum must be finite.
\end{defn}

In particular, the mass measure of $T \in \intcurr_m(Z)$ satisfies
\be
||T|| = \sum_{i=1}^\infty ||\varphi_{i\#}\Lbrack \theta_i \Rbrack ||.
\ee
Theorems 4.3 and 8.8 of \cite{AK} provide necessary and sufficient
criteria for determining when a current is integer rectifiable.


Note that the current in Example~\ref{basic-current-pushed} is an integer
rectifiable current.  

\begin{ex} \label{basic-mani}
If one has a Riemannian manifold, $M^m$,
and a bi-Lipschitz map $\varphi:M^m\to Z$, then 
$T=\varphi_\#\lbrack\One_M\rbrack$
is an integer rectifiable current of dimension $m$ in $Z$.  If $\varphi$
is an isometric embedding, and $Z=M$
then $\mass(T)=\vol(M^m)$.   Note further that
$\set(T)=\varphi(M)$.   
\end{ex}


\begin{defn}\label{rmrk-def-boundary} \cite{AK}[Defn 2.3] 
The {\em boundary} of $T\in \curr_m(Z)$ is defined
\be \label{def-boundary}
\partial T(f, \pi_1, ... \pi_{m-1}):= T(1, f, \pi_1,...\pi_{m-1}) \in M_{m-1}(Z)
\ee
When $m=0$, set $\partial T=0$.
\end{defn} 

Note that $\varphi_\#(\partial T)=\partial(\varphi_\#T)$. 
  
\begin{defn}  \cite{AK}[Defn 3.4 and 4.2]
An integer rectifiable current  $ T\in\intrectcurr_m(Z)$  is called an 
integral current, denoted $T\in \intcurr_m(Z)$,  if $\partial T$ 
defined as
\be
\partial T \left(f, \pi_1,...\pi_{m-1}\right) := T \left(1, f, \pi_1,...\pi_{m-1}\right)
\ee
has finite mass.   The total mass of an integral current is
\be
\nmass(T)=\mass(T) +\mass(\partial T).
\ee  
\end{defn}

Observe that $\partial \partial T=0$.
In \cite{AK} Theorem 8.6, Ambrosio-Kirchheim prove that 
\be
\partial: \intcurr_m(Z) \to \intcurr_{m-1}(Z)
\ee whenever $m\ge 1$.

Recall Definition~\ref{defn-push} of the push forward
of a current.
By (\ref{mass-push}) one can see that if 
$\varphi: Z_1 \to Z_2$ is Lipschitz, then 
\be
\varphi_{\#}: \intcurr_m(Z_1) \to \intcurr_{m}(Z_2).
\ee

Recall Definition~\ref{defn-rstr} of the restriction of a current.
The restriction of an integral current need not be 
an integral current except in special circumstances.
For example, $T$ might be integration over $[0,1]^2$ with
the Euclidean metric and $A\subset [0,1]^2$ could have
an infinitely long boundary, so that $T\rstr A\notin \intcurr_2([0,1]^2)$
because $\partial(T\rstr A)$ has infinite mass.    The Ambrosio-Kirchheim Slicing Theorem, presented next, allows one to prove
$T\rstr A$ is an integral current for a large collection of open
sets defined using Lipschitz functions.   See in particular 
(\ref{rstr-ok}) below.

\subsection{Ambrosio-Kirchheim Slicing Theorem}\label{subsect-Slicing}

As in the work of Federer-Fleming, Ambrosio-Kirchheim consider the slices of
currents:

\begin{thm} {\bf [Ambrosio-Kirchheim] }\cite{AK}[Theorems 5.6-5.7]
  \label{theorem-slicing}
Let $Z$ be a complete metric space, $T\in \intcurr_m Z$ and $f: Z \to \R$ a Lipschitz function.
For almost every $s\in \R$ one can define an integral current
\be
<T,f,s> := - \partial\left( T \rstr f^{-1}\left(s,\infty)\right) \right) + \left(\partial T\right) \rstr f^{-1}\left(s, \infty) \right),
\ee
so that
\be \label{bndry-slice-1}
\partial<T,f,s>  = <-\partial T, f,s>
\ee
and $<T_1+T_2, f,s>=<T_1, f,s> + <T_2, f,s>$.
In addition, one can
integrate the masses to obtain:
\be
\int_{s\in\R} \mass(<T,f,s>) \, ds = \mass(T \rstr df)
\le \Lip(f)\, \mass(T)
\ee
where
\be
(T \rstr df)(h, \pi_1,...,\pi_{m-1})=T(h, f,\pi_1,...\pi_{m-1}).
\ee
In particular, for almost every $s>0$ one has
\be \label{rstr-ok}
T \rstr f^{-1}(s,\infty) \in \intcurr_{m-1}\left(Z\right).
\ee
\end{thm}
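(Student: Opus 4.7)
The plan is to follow the strategy Federer--Fleming used in the Euclidean setting, adapted to the metric-space framework of Ambrosio--Kirchheim. The first step is to show that for almost every $s \in \R$, the restricted current
\be
T_s := T \rstr f^{-1}(s,\infty)
\ee
is an integral current. Since $T$ has finite mass and $f^{-1}(s,\infty)$ is a Borel set, $T_s$ is certainly a rectifiable current with $\mass(T_s) \le \mass(T)$, and integer rectifiability of $T_s$ follows because restriction to a Borel set preserves the parametrization in (\ref{param-representation}). The nontrivial point is that $\partial T_s$ have finite mass. Here I would use the identity
\be
\partial(T \rstr f^{-1}(s,\infty)) = (\partial T) \rstr f^{-1}(s,\infty) - \langle T, f, s\rangle
\ee
as a \emph{definition} of $\langle T, f, s\rangle$ (rearranged to the form in the statement), and prove the finite-mass bound on the slices by an integral estimate.

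The second step, which I expect to be the main technical obstacle, is the coarea-type mass inequality. I would approach it via the operator $T \rstr df$ defined by $(T \rstr df)(h,\pi_1,\dots,\pi_{m-1}) = T(h, f, \pi_1,\dots,\pi_{m-1})$; its mass satisfies $\mass(T \rstr df) \le \Lip(f)\,\mass(T)$ from (\ref{eqn-mass}) together with the multilinearity and locality axioms. Then I would establish
\be
\int_\R \mass(\langle T, f, s\rangle)\, ds \le \mass(T \rstr df),
\ee
using the parametrization (\ref{param-representation}) to reduce to integer-multiplicity currents $\varphi_{i\#}\Lbrack\theta_i\Rbrack$ pushed forward from $\R^m$, where the inequality becomes the classical coarea formula for BV-functions applied to $\theta_i$ along the fibers of $f \circ \varphi_i$. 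Summing over $i$ and invoking dominated convergence gives the bound; finiteness of this integral then forces $\mass(\langle T, f, s\rangle) < \infty$ for a.e.\ $s$, which combined with the definition forces $\mass(\partial T_s) < \infty$ for a.e.\ $s$, promoting $T_s$ to an integral current and hence (\ref{rstr-ok}).

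The third step is to verify the algebraic identities. The boundary formula $\partial\langle T, f, s\rangle = \langle -\partial T, f, s\rangle$ follows by applying $\partial$ to the definition, using $\partial^2 = 0$ on $T$ and on $\partial T$ and replacing $T$ with $-\partial T$ in the definition itself. Linearity $\langle T_1 + T_2, f, s\rangle = \langle T_1, f, s\rangle + \langle T_2, f, s\rangle$ is immediate since restriction, boundary, and pushforward are all linear in the current. The last routine check is that for a.e.\ $s$ the slice is integer rectifiable rather than merely normal; this follows from the integer rectifiability of $T_s$ and $(\partial T) \rstr f^{-1}(s,\infty)$ for a.e.\ $s$, together with the closure of $\intcurr_{m-1}(Z)$ under boundary maps from $\intcurr_m(Z)$ established in \cite{AK}[Theorem 8.6]. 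The hard part is really the coarea inequality; everything else is a formal consequence of the definitions and the previously developed apparatus.
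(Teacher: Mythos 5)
This theorem is cited in the paper as a background result from Ambrosio--Kirchheim (\cite{AK}, Theorems 5.6--5.7); the paper does not prove it, so there is no internal proof to compare your sketch against. Evaluated on its own terms, your outline gets the formal algebra right --- the boundary identity via $\partial^2=0$, linearity, and the promotion of the slices to integer rectifiable currents via restriction plus \cite{AK} Theorem 8.6 all check out --- and the overall Federer--Fleming strategy of defining the slice by rearranging the boundary-of-restriction identity is indeed the right framework.

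There are two gaps worth naming. First, the theorem asserts an \emph{equality},
$\int_{\R}\mass(\langle T,f,s\rangle)\,ds=\mass(T\rstr df)$, whereas your Step~2 establishes only the upper bound $\int_{\R}\mass(\langle T,f,s\rangle)\,ds\le\mass(T\rstr df)$. The inequality is enough to conclude that almost every slice has finite mass, and hence that almost every restriction is integral, but it does not give the full statement as written; the matching lower bound is the genuinely delicate part of the coarea identity and your sketch does not address it. Second, the proposed reduction to a chart-by-chart Euclidean computation is less routine than suggested: slicing does not a priori commute with an infinite sum of parametrized pieces, because the slices of the individual pieces $\varphi_{i\#}\Lbrack\theta_i\Rbrack$ interact along the level sets of $f$ and a dominated-convergence argument needs an explicit integrable majorant uniform in $i$. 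In the Euclidean piece, what you need is the coarea formula for countably rectifiable sets (or the classical Federer--Fleming slicing theory for integer-multiplicity rectifiable currents), not the BV coarea formula for the weight $\theta_i$: the relevant level sets are those of $f\circ\varphi_i$ on the rectifiable carrier, not those of $\theta_i$. Ambrosio--Kirchheim themselves avoid the chart decomposition and work directly with the mass measure and disintegration, first for normal currents and then closing up to integer rectifiability, which sidesteps the convergence issue your sketch would have to resolve.
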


\begin{rmrk} \label{push-slice-1}
Observe that for any $T\in \intcurr_m(Z')$, and
any Lipschitz functions, $\varphi: Z\to Z'$ and 
$f: Z' \to \R$ and any $s>0$, one has
\be
<\varphi_{\#}T,f,s> = \varphi_{\#} <T , (f \circ\varphi), s >.
\ee
 \end{rmrk}
 
 \subsection{Review of Convergence of Currents}
 
Ambrosio Kirchheim's Compactness Theorem, which extends Federer-Fleming's Flat Norm 
Compactness Theorem, is stated in terms of weak convergence of
currents.   Definition 3.6 of \cite{AK} extends Federer-Fleming's notion of weak convergence (except that they do not require compact support):

\begin{defn} \label{def-weak}
A sequence of integral currents $T_j \in \intcurr_m\left(Z\right)$ is said to converge weakly to
a current $T$ iff the pointwise limits satisfy
\be
\lim_{j\to \infty}  T_j\left(f, \pi_1,...\pi_m\right) = T\left(f, \pi_1,...\pi_m\right) 
\ee
for all bounded Lipschitz $f: Z \to \R$ and Lipschitz $\pi_i: Z \to \R$.
One writes
\be
T_j \weaklyto T
\ee
\end{defn}

One sees immediately that $T_j \weaklyto T$ implies
\be
\partial T_j \weaklyto \partial T,   
\ee
and
\be
\varphi_\# T_j \weaklyto \varphi_\# T.
\ee
However $T_j \rstr A$ need not converge weakly to $T \rstr A$
as seen in the following example:

\begin{example}
Let $Z= \R^2$ with the Euclidean metric.  Let $\varphi_j: [0,1]\to Z$
be $\varphi_j(t) = (1/j, t)$ and $\varphi_\infty(t) = (0, t)$.   Let 
$S\in \intcurr_1([0,1])$ be
\be
S(f, \pi_1)= \int_0^1 f \, d\pi_1.
\ee
Let $T_j \in \intcurr_1(Z)$
be defined $T_j =\varphi_{j\#}(S)$.   Then $T_j \weaklyto T_\infty$.
Taking $A=[0,1]\times (0,1)$, one has
$T_j \rstr A=T_j$ but $T_\infty \rstr A = 0$.
\end{example}

Immediately below the definition of weak convergence \cite{AK} Defn 3.6,
Ambrosio-Kirchheim prove 
the lower semicontinuity of mass:   {\em
If $T_j$ converges weakly to $T$, then} 
\be
\liminf_{j\to\infty} \mass(T_j) \ge \mass(T).  
\ee
and for any open set, $A\subset Z$,
\be
\liminf_{j\to\infty} ||T_j||(A) \ge ||T||(A).  
\ee

\begin{thm}[Ambrosio-Kirchheim Compactness]\label{AK-compact}
Given any complete metric space 
$Z$, a compact set $K \subset Z$ and $A_0, V_0>0$.
Given
any sequence of integral currents  $T_j \in \intcurr_m \left(Z\right)$ satisfying
\be
\mass(T_j) \le V_0 \textrm{, } \mass(\partial T_j) \le A_0
\textrm{ and }
\set\left(T_j\right) \subset K,
\ee there exists a subsequence, $T_{j_i}$, and a limit current $T \in \intcurr_m\left(Z\right)$
such that $T_{j_i}$ converges weakly to $T$.
\end{thm}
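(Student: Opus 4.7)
The plan is to adapt the classical Federer--Fleming compactness scheme to the metric setting by a diagonal extraction on a countable dense family of test tuples, and then verify that the resulting limit functional is in fact an integral current. The final step, verification of integer rectifiability, is where the real work lies.

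First, since $\set(T_j)\subset K$ and the mass measures are concentrated on $K$, one may restrict attention to test tuples defined on the compact set $K$. By Arzela--Ascoli on $K$, for each pair of constants $L,M\ge 0$ the collection of Lipschitz functions on $K$ with $\Lip(\cdot)\le L$ and $\|\cdot\|_\infty\le M$ is totally bounded in the uniform topology, hence separable. Taking a nested exhaustion over $L$ and $M$ produces a countable family $\mathcal{C}\subset \mathcal{D}^m(K)$ which is uniformly dense on each bounded-Lipschitz stratum. The finite-mass estimate (\ref{eqn-mass}) gives
\be
|T_j(f,\pi_1,\ldots,\pi_m)|\le V_0\,\|f\|_\infty\prod_{i=1}^m \Lip(\pi_i),
\ee
so for each fixed tuple in $\mathcal{C}$ the sequence $T_j(f,\pi)$ is bounded in $\R$. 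A standard diagonal argument extracts a single subsequence $T_{j_i}$ along which $T_{j_i}(f,\pi)$ converges for every tuple in $\mathcal{C}$; denote the pointwise limit by $T(f,\pi)$.

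Second, the uniform estimate together with density extends $T$ by continuity to a multilinear functional on all of $\mathcal{D}^m(Z)$, and the three current axioms pass to the limit. Finite mass with $\mass(T)\le V_0$ follows from Ambrosio--Kirchheim's lower semicontinuity of the mass measure; locality is inherited pointwise; continuity under pointwise convergence of equi-Lipschitz $\pi_i$ is upgraded through the observation that on the compact $K$ pointwise convergence of equi-Lipschitz functions is automatically uniform by Arzela--Ascoli, which combined with the uniform estimate transfers the AK continuity axiom from the $T_{j_i}$ to the limit. Applying the same extraction procedure in parallel to $\partial T_{j_i}$, whose masses are bounded by $A_0$, yields a limit functional which, by passing to the limit in $\partial T_{j_i}(f,\pi)=T_{j_i}(1,f,\pi)$, is identified with $\partial T$, giving $\mass(\partial T)\le A_0 <\infty$.

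The principal obstacle is verifying that $T$ is integer rectifiable rather than merely a normal current. Here I would invoke Ambrosio--Kirchheim's characterization of integer rectifiable currents (Theorems~4.3 and~8.8 of \cite{AK}): a normal current is integer rectifiable precisely when its pushforward under every Lipschitz map $\varphi:Z\to\R^m$ is an integer rectifiable current in Euclidean space. For each such $\varphi$ one has $\varphi_\# T_{j_i}\weaklyto \varphi_\# T$, and by (\ref{mass-push}) together with $\partial\varphi_\# T_{j_i}=\varphi_\#\partial T_{j_i}$ the masses and boundary masses of the Euclidean pushforwards are uniformly bounded by $[\Lip(\varphi)]^m V_0$ and $[\Lip(\varphi)]^{m-1}A_0$ respectively. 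The classical Federer--Fleming closure theorem in $\R^m$ then forces $\varphi_\# T$ to be integer rectifiable in $\R^m$. Applying the AK characterization and combining with the already established bound $\mass(\partial T)\le A_0$ yields $T\in \intcurr_m(Z)$, completing the proof.
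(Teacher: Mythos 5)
The paper does not prove this theorem --- it is cited directly from Ambrosio--Kirchheim \cite{AK} --- so there is no argument in the paper to compare against; I assess your proof on its own terms. Your overall scheme (diagonal extraction over a countable dense family, verification of the current axioms, and integer rectifiability via a pushforward-to-$\R^m$ criterion together with the Euclidean Federer--Fleming closure theorem) is the right one and close in spirit to what Ambrosio--Kirchheim actually do.

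The genuine gap is in your second step. You claim that the uniform mass estimate together with sup-norm density of $\mathcal{C}$ extends the limit $T$ to all of $\mathcal{D}^m(Z)$ and transfers the continuity axiom. But the mass estimate
\be
|T_j(f,\pi_1,\ldots,\pi_m)|\le V_0\,\|f\|_\infty\prod_{i=1}^m\Lip(\pi_i)
\ee
controls $T_j$ only through the Lipschitz constants of the $\pi_i$, not through their sup norms. If $(f_n,\pi_n)\in\mathcal{C}$ approximates $(f,\pi)$ uniformly on a bounded-Lipschitz stratum, multilinearity produces $m$ difference terms of the form $T_j(f_n,\ldots,\pi_k-\pi_{k,n},\ldots)$, and the mass estimate bounds each by a factor $\Lip(\pi_k-\pi_{k,n})$, which is merely bounded (on the order of twice the stratum bound), not small. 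So the density argument does not close from the mass bound alone, and the Arzela--Ascoli observation does not rescue the continuity axiom for the limit either. What is actually needed is the sharper estimate that Ambrosio--Kirchheim derive \emph{from} the continuity axiom, of the form
\be
|T(f,\pi_1,\ldots,\pi_m)-T(f,\pi_1',\pi_2,\ldots,\pi_m)|\le\prod_{i\ge 2}\Lip(\pi_i)\int|f|\,|\pi_1-\pi_1'|\,d\|T\|,
\ee
which here is bounded by $V_0\,\|f\|_\infty\,\|\pi_1-\pi_1'\|_\infty\prod_{i\ge2}\Lip(\pi_i)$, uniformly in $j$. With that estimate your extension and continuity transfer go through, but as written the proposal silently skips it. A secondary caveat: the third step leans on a particular reading of AK Theorem~8.8, namely that a normal current is integral iff every Lipschitz pushforward to $\R^m$ lands in $\intcurr_m(\R^m)$. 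One should confirm that 8.8 is this \emph{characterization} and not the closure theorem itself, to avoid circularity; granting that reading, applying Federer--Fleming closure to $\varphi_\# T_{j_i}$ with the mass and boundary-mass bounds you compute is correct.
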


\vspace{.4cm}
\subsection{Review of Integral Current Spaces}

The notion of an integral current space was introduced by
the author and Stefan Wenger in \cite{SorWen2}:

\begin{defn} \label{defn-int-curr-space}
An $m$ dimensional metric space 
$\left(X,d,T\right)$ is called an integral current space if
it has a integral current structure $T \in \intcurr_m\left(\bar{X}\right)$
where $\bar{X}$ is the metric completion of $X$
and $\set(T)=X$.   Also included in the $m$ dimensional integral
current spaces is the $0$ space, denoted $\bf{0}$.   The integral current structure of the $0$ space is $T=0$ and it has an empty metric space.
\end{defn}

Note that $\set\left(\partial T\right) \subset \bar{X}$.   
The boundary of $\left(X,d,T\right)$ is then the integral current space:
\be
\partial \left(X,d_X,T\right) := \left(\set\left(\partial T\right), d_{\bar{X}}, \partial T\right).
\ee
If $\partial T=0$ then one says $\left(X,d,T\right)$ is an integral current without boundary.  The $0$ space has no boundary.

\begin{defn} \label{defn-integral-current-space}
The space of $m\ge 0$ dimensional integral current spaces, 
$\mathcal{M}^m$,
consists of all metric spaces which are integral current spaces 
with currents of dimension $m$ as
in Definition~\ref{defn-int-curr-space} as well as the $\bf{0}$ spaces.
Then $\partial: \mathcal{M}^{m+1}\to \mathcal{M}^{m}$.
\end{defn}

\begin{rmrk} \label{space-param}
Any $m$ dimensional integral current space is countably 
$\mathcal{H}^m$ rectifiable with orientated charts, $\varphi_i$
and weights $\theta_i$ provided
as in (\ref{param-representation}).   
\end{rmrk}

\begin{example}
A compact oriented Riemannian manifold with boundary, $M^m$,
is an integral current space, where $X=M^m$, $d$ is the standard
metric on $M$ and $T$ is integration over $M$.  In this
case $\mass(M)=\vol(M)$ and $\partial M$ is the boundary manifold.
When $M$ has no boundary, $\partial M=0$.
\end{example}

\vspace{.4cm}
\subsection{Review of the Intrinsic Flat Convergence}

Recall that the flat distance between $m$ dimensional integral currents 
$S,T\in\intcurr_m\left(Z\right)$ is given by 
\begin{equation} \label{eqn-Federer-Flat}
d^Z_{F}\left(S,T\right):= 
\inf\{\mass\left(U\right)+\mass\left(V\right):
S-T=U+\bdry V \}
\end{equation}
where $U\in\intcurr_m\left(Z\right)$ and $V\in\intcurr_{m+1}\left(Z\right)$.
This notion of a flat distance was first introduced by Whitney
in \cite{Whitney} for chains and later adapted to rectifiable currents by Federer-Fleming \cite{FF}.
The flat distance between Ambrosio-Kirchheim's integral currents was
studied by Wenger in \cite{Wenger-flat}.   In particular,
Wenger proved that if $T_j \in \intcurr_m(Z)$ has
$\mass(T_j) \le V_0$ and $\mass(\partial T_j) \le A_0$ then
$T_j$ converges weakly to $T$ as currents iff 
$d^Z_F(T_j, T) \to 0$
exactly as in Federer-Fleming.

The intrinsic flat distance between integral current spaces
was first defined in \cite{SorWen2}[Defn 1.1]:

\begin{defn} \label{def-flat1} 
 For $M_1=\left(X_1,d_1,T_1\right)$ and $M_2=\left(X_2,d_2,T_2\right)\in\mathcal M^m$ let the 
intrinsic flat distance  be defined:
 \begin{equation}\label{equation:def-abstract-flat-distance}
  d_{\Fm}\left(M_1,M_2\right):=
 \inf d_F^Z
\left(\varphi_{1\#} T_1, \varphi_{2\#} T_2 \right),
 \end{equation}
where the infimum is taken over all complete metric spaces 
$\left(Z,d\right)$ and isometric embeddings 
$\varphi_1 : \left(\bar{X}_1,d_1\right)\to \left(Z,d\right)$ and $\varphi_2: \left(\bar{X}_2,d_2\right)\to \left(Z,d\right)$
and the flat norm $d_F^Z$ is taken in $Z$.
Here $\bar{X}_i$ denotes the metric completion of $X_i$ and $d_i$ is the extension
of $d_i$ on $\bar{X}_i$ and $\phi_\# T$ denotes the push forward of $T$ by the map $\phi$.
\end{defn}

In \cite{SorWen2}, it is observed that
 \be
d_{\Fm}\left(M_1,M_2\right) \le d_{\Fm}\left(M_1,0\right)+d_{\Fm}\left(0,M_2\right) \le \mass\left(M_1\right)+\mass\left(M_2\right).
\ee 
There it is also proven that $d_{\mathcal{F}}$ satisfies the
triangle inequality \cite{SorWen2}[Thm 3.2] and is a distance \cite{SorWen2}[Thm3.27] on the class of precompact integral current spaces up to
current preserving isometry.   In particular it is a distance on the class or oriented compact manifolds with boundary of a given dimension.

In \cite{SorWen2} Theorem 3.23 it is also proven that

\begin{thm}\label{inf-dist-attained}\label{achieved} \cite{SorWen2}[Thm 4.23]
Given a pair of precompact integral current spaces, $M^m_1=(X_1,d_1,T_1)$
and 
$M^m_2=(X_2,d_2,T_2)$, 
there exists a compact metric space, $(Z, d_Z)$,
integral
currents $U\in\intcurr_m\left(Z\right)$ and  $V\in\intcurr_{m+1}\left(Z\right)$,
and isometric embeddings
$\varphi_1 : \bar{X}_1\to Z$ and $\varphi_2:\bar{X}_2 \to Z$
with
\begin{equation} \label{eqn-Federer-Flat-3}
\varphi_{1\#} T_1- \varphi_{2\#} T_2=U+\bdry V
\end{equation}
such that
\begin{equation}\label{eqn-local-defn-2}
d_{\Fm}\left(M_1,M_2\right)=\mass\left(U\right)+\mass\left(V\right).
\end{equation}
\end{thm}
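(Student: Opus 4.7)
The plan is to take a minimizing sequence of admissible fillings, localize each via the Ambrosio-Kirchheim Slicing Theorem, embed them into a single compact ambient space using Gromov's embedding theorem, and extract weak limits via the Ambrosio-Kirchheim Compactness Theorem.

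First I would fix, for each $k \in \N$, a complete metric space $Z_k$, isometric embeddings $\varphi_{i,k} \colon \bar{X}_i \to Z_k$, and integral currents $U_k \in \intcurr_m(Z_k)$, $V_k \in \intcurr_{m+1}(Z_k)$ with $\varphi_{1,k\#} T_1 - \varphi_{2,k\#} T_2 = U_k + \partial V_k$ and $\mass(U_k) + \mass(V_k) \to d_{\mathcal F}(M_1, M_2)$. Let $A_k := \varphi_{1,k}(\bar X_1) \cup \varphi_{2,k}(\bar X_2)$, a compact subset of $Z_k$, and $f_k(z) := d_{Z_k}(z, A_k)$. The integrated slice-mass inequality in Theorem~\ref{theorem-slicing}, $\int_0^N \mass(\langle V_k, f_k, s\rangle)\,ds \le \mass(V_k)$, lets me pick $s_k \in (0, N_k)$ with $N_k \to \infty$ and $\mass(\langle V_k, f_k, s_k\rangle) \to 0$. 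Replacing the filling by
\[
U_k' := U_k \rstr \{f_k \le s_k\} - \langle V_k, f_k, s_k\rangle, \qquad V_k' := V_k \rstr \{f_k \le s_k\},
\]
the slice identity $\partial(V_k \rstr \{f_k \le s_k\}) = (\partial V_k) \rstr \{f_k \le s_k\} + \langle V_k, f_k, s_k\rangle$, combined with $\spt(\varphi_{i,k\#} T_i) \subset A_k \subset \{f_k = 0\}$, gives $U_k' + \partial V_k' = \varphi_{1,k\#} T_1 - \varphi_{2,k\#} T_2$, while $\mass(U_k') + \mass(V_k') \le \mass(U_k) + \mass(V_k) + \mass(\langle V_k, f_k, s_k\rangle)$ keeps the sequence minimizing. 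Both currents are now supported in the compact set $K_k := \overline{T_{s_k}(A_k)}$.

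Next I would pass to a common compact ambient space. Assuming the $K_k$ are uniformly Gromov-Hausdorff precompact (the crux step, discussed below), Theorem~\ref{Gromov-Z} provides, along a subsequence, a compact metric space $(Z, d_Z)$ with isometric embeddings $\psi_k \colon K_k \to Z$ whose images Hausdorff-converge to a compact subset $K \subset Z$. Push forward: $\tilde U_k := \psi_{k\#} U_k'$, $\tilde V_k := \psi_{k\#} V_k'$, and $\tilde \varphi_{i,k} := \psi_k \circ \varphi_{i,k}$; masses are preserved by~(\ref{lem-push-mass}) and the filling identity continues to hold in $Z$. The family $\{\tilde \varphi_{i,k}\}$ is $1$-Lipschitz with common compact domain and codomain, so Theorem~\ref{Gromov-Arz-Asc} gives along a subsequence isometric limits $\varphi_i \colon \bar X_i \to Z$. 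Now the Ambrosio-Kirchheim Compactness Theorem~\ref{AK-compact}, applied with the fixed compact $K$ containing all $\set(\tilde U_k), \set(\tilde V_k)$, produces further subsequential weak limits $\tilde U_k \weaklyto U \in \intcurr_m(Z)$ and $\tilde V_k \weaklyto V \in \intcurr_{m+1}(Z)$. Weak continuity of $\partial$ and convergence of push-forwards under uniform convergence of Lipschitz maps give $\tilde \varphi_{i,k\#} T_i \weaklyto \varphi_{i\#} T_i$ and $\partial \tilde V_k \weaklyto \partial V$, hence $U + \partial V = \varphi_{1\#} T_1 - \varphi_{2\#} T_2$. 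Lower semicontinuity of mass then gives $\mass(U) + \mass(V) \le \liminf_k [\mass(\tilde U_k) + \mass(\tilde V_k)] = d_{\mathcal F}(M_1, M_2)$, while the reverse inequality is built into the definition.

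The main obstacle is the uniform Gromov-Hausdorff precompactness of $\{K_k\}$, which requires both a uniform diameter bound and uniform total boundedness. The diameter bound is already delicate: the minimizing property forces $s_k$ to be drawn from a growing interval (otherwise the slice mass cannot be driven to zero), and nothing in Definition~\ref{def-flat1} prevents $\varphi_{1,k}(\bar X_1)$ and $\varphi_{2,k}(\bar X_2)$ from drifting arbitrarily far apart inside $Z_k$ along a minimizing sequence. Overcoming this demands a preliminary surgery on the $Z_k$ replacing parts far from $A_k \cup \spt(U_k) \cup \spt(V_k)$ by a smaller gluing, coupled with a quantitative slicing argument applied in the direction transverse to the two embedded images showing that a large separation between $\varphi_{1,k}(\bar X_1)$ and $\varphi_{2,k}(\bar X_2)$ forces $\mass(V_k)$ to grow in proportion. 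Uniform total boundedness would then follow by combining the diameter bound with total boundedness of the fixed $\bar X_i$ and the rectifiable parametrization of integral currents recalled in Remark~\ref{space-param}.
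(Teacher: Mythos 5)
The statement you are proving is cited in the paper as \cite{SorWen2}[Thm~4.23]; the paper does not reprove it, but Remark~\ref{rmrk-inf-dist-attained} tells you the shape of the actual argument: the space $Z$ is constructed as the \emph{injective envelope} of the Gromov--Hausdorff limit of a sequence $Z_n$ satisfying the uniform bound $\diam(Z_n)\le 3\,\diam(X_1)+3\,\diam(X_2)$. Your overall skeleton (minimizing sequence, localize fillings, embed in a common compact space via Theorem~\ref{Gromov-Z}, extract weak limits via Theorem~\ref{AK-compact}, lower semicontinuity of mass) is the correct template for the endgame, but you have correctly isolated the crux --- uniform GH-precompactness of the $\{K_k\}$ --- and your proposed repair there does not work.

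Two concrete problems. First, your slicing reduction is genuinely stuck on its own terms: to drive $\mass(\langle V_k,f_k,s_k\rangle)\to 0$ you must let $s_k$ range over intervals $(0,N_k)$ with $N_k\to\infty$ (your own observation), but then $K_k=\overline{T_{s_k}(A_k)}$ has no uniform diameter bound; while if you keep $s_k$ in a fixed interval you lose control of the slice mass and hence the minimizing property of $(U_k',V_k')$. Second, the heuristic you suggest to escape this --- that ``a large separation between $\varphi_{1,k}(\bar X_1)$ and $\varphi_{2,k}(\bar X_2)$ forces $\mass(V_k)$ to grow'' --- is false: the admissible filling $U_k=\varphi_{1,k\#}T_1-\varphi_{2,k\#}T_2$, $V_k=0$ has $\mass(V_k)=0$ regardless of how far apart the images are placed. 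So no slicing argument along the transverse direction can extract a separation bound from the minimizing property alone.

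The missing idea is a \emph{preliminary surgery that directly shortens the cross-distance without disturbing the two isometric embeddings}. Given any competitor $(Z_k,\varphi_{1,k},\varphi_{2,k},U_k,V_k)$, fix $p_i\in\bar X_i$ and attach to $Z_k$ a segment of length $\ell=\max\{\diam(X_1),\diam(X_2)\}$ joining $\varphi_{1,k}(p_1)$ to $\varphi_{2,k}(p_2)$, using the resulting quotient length metric $d'\le d_{Z_k}$. Because $\ell\ge\diam(X_i)$, the triangle inequality shows the shortcut never undercuts distances \emph{within} either image, so $\varphi_{1,k}$ and $\varphi_{2,k}$ remain isometric embeddings, while the identity map $(Z_k,d_{Z_k})\to(Z_k,d')$ is $1$-Lipschitz, so the pushed-forward fillings have no larger mass and the filling identity persists. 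Restricting to $\varphi_{1,k}(\bar X_1)\cup\varphi_{2,k}(\bar X_2)\cup\spt U_k\cup\spt V_k$ and passing to the injective envelope (a compact $1$-Lipschitz retract, so again masses do not increase) produces a replacement ambient space with $\diam\le 3\,\diam(X_1)+3\,\diam(X_2)$, which is exactly the uniform bound recorded in Remark~\ref{rmrk-inf-dist-attained}. With that bound in hand, total boundedness of the envelopes follows from rectifiability of the fillings and compactness of $\bar X_i$, Gromov precompactness gives the common $Z$ along a subsequence, and the rest of your argument (isometric limits of the embeddings, AK compactness after checking $\mass(\partial\tilde U_k)$ and $\mass(\partial\tilde V_k)$ are controlled via $\partial U_k'=\varphi_{1\#}\partial T_1-\varphi_{2\#}\partial T_2$ and $\partial V_k'=\varphi_{1\#}T_1-\varphi_{2\#}T_2-U_k'$, then lower semicontinuity of mass) goes through as you sketched.
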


\begin{rmrk}\label{rmrk-inf-dist-attained}
The metric space $Z$ in Theorem~\ref{inf-dist-attained} has
\be
\diam(Z) \le 3 \diam(X_1) + 3\diam(X_2).
\ee
This is seen by consulting the proof of Theorem 3.23 in \cite{SorWen2},
where $Z$ is constructed as the injective envelope of the Gromov-Hausdorff
limit of a sequence of spaces $Z_n$ with this same diameter bound.
\end{rmrk}

The following theorem in \cite{SorWen2} is an immediate consequence
of Gromov and Ambrosio-Kirchheim's Compactness Theorems:

\begin{thm} \label{GH-to-flat}
Given a sequence of precompact $m$ dimensional integral current spaces $M_j=\left(X_j, d_j, T_j\right)$ such that 
\be
\left(\bar{X}_{j}, d_{j}\right) \GHto \left(Y,d_Y\right),\,\,\,
\mass(M_j)\le V_0 \,\,\,\textrm{ and } \,\,\, \mass(\partial M_j)\le A_0
\ee
then a subsequence converges 
 in the 
intrinsic flat sense 
\be
\left(X_{j_i}, d_{j_i}, T_{j_i}\right) \Fto \left(X,d_X,T\right)
\ee
where either $\left(X,d_X,T\right)$ is the ${\bf 0}$ current space
or $\left(X,d_X,T\right)$ is an $m$ dimensional integral current space
with $X \subset Y$ with the restricted metric $d_X=d_Y$.
\end{thm}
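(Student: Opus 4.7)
The plan is to transplant everything into a common compact ambient space via Gromov's embedding theorem, then apply Ambrosio-Kirchheim compactness, and finally identify the support of the limit current with a subset of the Gromov-Hausdorff limit $Y$.

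First I would invoke Theorem~\ref{Gromov-Z} to produce a compact metric space $(Z, d_Z)$ together with isometric embeddings $\varphi_j: \bar{X}_j \to Z$ (for $j \in \{1,2,\dots,\infty\}$, where $\bar{X}_\infty := Y$) such that
\be
d_H^Z(\varphi_j(\bar{X}_j), \varphi_\infty(Y)) \to 0.
\ee
Since $Z$ is compact, there is a compact set $K \subset Z$ (say $K = Z$ itself, or a closed neighborhood of $\varphi_\infty(Y)$) containing every $\varphi_j(\bar{X}_j)$. Next I would push the currents forward to obtain $\varphi_{j\#}T_j \in \intcurr_m(Z)$. By (\ref{lem-push-mass}), $\mass(\varphi_{j\#}T_j) = \mass(T_j) \le V_0$ and, because $\partial$ commutes with push-forward, $\mass(\partial\varphi_{j\#}T_j) = \mass(\partial T_j) \le A_0$. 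Moreover $\set(\varphi_{j\#}T_j) \subset \varphi_j(\bar{X}_j) \subset K$.

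Applying Theorem~\ref{AK-compact} to the sequence $\varphi_{j\#}T_j$, I extract a subsequence (also indexed by $j$) converging weakly to some $T \in \intcurr_m(Z)$. By Wenger's equivalence, under the uniform total mass bound weak convergence is the same as flat-norm convergence, so $d_F^Z(\varphi_{j\#}T_j, T) \to 0$. Hence $d_{\mathcal F}(M_j, (\set(T), d_Z, T)) \to 0$ by Definition~\ref{def-flat1}, as long as one interprets the right-hand side correctly as an integral current space. If $T = 0$ the limit is the $\bf{0}$ space and we are done.

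The key step I expect to require the most care is identifying $\set(T) \subset \varphi_\infty(Y)$ so that the limit has the restricted metric $d_X = d_Y$. For this I would argue as follows: let $U \subset Z$ be any open set disjoint from $\varphi_\infty(Y)$. Since $\bar U$ is compact and $d_H^Z(\varphi_j(\bar{X}_j), \varphi_\infty(Y)) \to 0$, for all large $j$ one has $U \cap \varphi_j(\bar{X}_j) = \emptyset$, hence $\|\varphi_{j\#}T_j\|(U) = 0$. Using lower semicontinuity of mass on open sets one gets $\|T\|(U) = 0$, so $\|T\|$ is concentrated on the compact set $\varphi_\infty(Y)$; therefore $\set(T) \subset \varphi_\infty(Y)$. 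Defining $X := \varphi_\infty^{-1}(\set(T))$ with the metric $d_X$ inherited from $Y$, and setting the current structure to be $(\varphi_\infty^{-1})_\# T$ (well-defined since $\varphi_\infty$ restricted to $X$ is an isometry onto $\set(T)$), gives an $m$-dimensional integral current space with $X \subset Y$, $d_X = d_Y$, and $d_{\mathcal F}(M_j, (X,d_X,T)) \to 0$. The main subtlety throughout is being careful that all of the formal manipulations respect the isometric-embedding structure, so that the abstract intrinsic flat distance between the original $M_j$ and the limit is controlled by the concrete flat norm $d_F^Z$ in the ambient space $Z$.
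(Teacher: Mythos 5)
Your proposal is correct and follows exactly the route the paper indicates: the paper presents this theorem as an immediate consequence of Gromov's embedding theorem and Ambrosio-Kirchheim compactness (with Wenger's equivalence of weak and flat convergence under uniform total-mass bounds), which is precisely the chain you carry out, including the Hausdorff-convergence argument locating $\spt(T)$ inside $\varphi_\infty(Y)$. The only small point to tighten is that the open set $U$ used in the support argument should have its \emph{closure} disjoint from $\varphi_\infty(Y)$, so that a positive lower bound on $d_Z(\bar U, \varphi_\infty(Y))$ is available and eventual disjointness from $\varphi_j(\bar X_j)$ follows.
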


Immediately one notes that if $Y$ has Hausdorff dimension less than $m$,
then $(X,d,T)=\bf{0}$.   There are many examples of sequences of Riemannian manifolds which have no Gromov-Hausdorff limit but have an intrinsic flat limit.
The first is Ilmanen's Example of an increasingly hairy three sphere
with positive scalar curvature described in \cite{SorWen2} Example A.7.   

The following three theorems are proven in work of the
author with Wenger \cite{SorWen2}.   These theorems with
the work of Ambrosio-Kirchheim reviewed are key ingredients in the proofs
of the theorems in this paper.

\begin{thm}\label{converge}\label{converge} \cite{SorWen2}[Thm 4.2]
If a sequence of 
integral current spaces has
\be
M_{j}=\left(X_j, d_j, T_j\right) \Fto M_0=\left(X_0,d_0,T_0\right),
\ee 
then
there is a separable
complete metric space, $Z$, and isometric embeddings  $\varphi_j: X_j \to Z$ such that
\be
d_F^Z(\varphi_{j\#}T_j,\varphi_{0\#} T_0)\to 0
\ee
and thus $\varphi_{j\#}T_j$ converges weakly to $\varphi_{0\#} T_0$ as well. 
\end{thm}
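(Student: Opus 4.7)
The plan is to realize all pairwise flat approximations simultaneously inside a single separable complete metric space, obtained by gluing the attaining spaces provided by Theorem~\ref{achieved} along the common target $\bar{X}_0$.

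First, for each $j$ I would apply Theorem~\ref{achieved} to the pair $(M_j, M_0)$, producing a compact metric space $Z_j$, isometric embeddings $\psi_j:\bar{X}_j\to Z_j$ and $\psi_0^{(j)}:\bar{X}_0\to Z_j$, and integral currents $U_j\in\intcurr_m(Z_j)$, $V_j\in\intcurr_{m+1}(Z_j)$ satisfying
\[
\psi_{j\#}T_j-\psi_{0\#}^{(j)}T_0 = U_j+\partial V_j, \qquad \mass(U_j)+\mass(V_j)=d_{\mathcal F}(M_j,M_0),
\]
and by hypothesis this last quantity tends to $0$.

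Second, I would glue: on $\bigsqcup_{j\ge 1} Z_j$ identify $\psi_0^{(j)}(x)\sim \psi_0^{(k)}(x)$ for each $x\in\bar{X}_0$ and all $j,k$, and equip the quotient $Z'$ with the largest pseudo-metric restricting to $d_{Z_j}$ on each component (equivalently, the infimum of chain-sums passing through the identifications). The key verification is that for $p,q$ both in a fixed $Z_j$, any competing chain detouring through some $Z_k$ must leave $Z_j$ at $\psi_0^{(j)}(x)$ and re-enter at $\psi_0^{(j)}(y)$; because $\psi_0^{(j)}$ and $\psi_0^{(k)}$ are isometric, the detour inside $Z_k$ contributes at least $d_{Z_j}(\psi_0^{(j)}(x),\psi_0^{(j)}(y))$, so the triangle inequality in $Z_j$ forces the total chain length to be at least $d_{Z_j}(p,q)$. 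Consequently each $Z_j$, and therefore each $\bar{X}_j$ via $\varphi_j:=\iota_j\circ\psi_j$ (where $\iota_j:Z_j\to Z'$ is the inclusion) and $\bar{X}_0$ via the common identified image, embeds isometrically in $Z'$. I would then take $Z$ to be the metric completion of $Z'$; since each compact $Z_j$ is separable and the family is countable, $Z$ is separable.

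Third, push the decomposition from step one into $Z$. Because $\iota_j$ is isometric, (\ref{lem-push-mass}) yields $\mass(\iota_{j\#}U_j)=\mass(U_j)$ and $\mass(\iota_{j\#}V_j)=\mass(V_j)$, and the identity $\varphi_{j\#}T_j-\varphi_{0\#}T_0=\iota_{j\#}U_j+\partial\,\iota_{j\#}V_j$ gives
\[
d_F^Z(\varphi_{j\#}T_j,\varphi_{0\#}T_0)\le\mass(U_j)+\mass(V_j)\to 0.
\]
Weak convergence follows directly from the mass bound (\ref{eqn-mass}): for any $(f,\pi_1,\dots,\pi_m)\in\mathcal{D}^m(Z)$,
\[
\bigl|(\varphi_{j\#}T_j-\varphi_{0\#}T_0)(f,\pi_1,\dots,\pi_m)\bigr|\le\bigl(|f|_\infty\mass(U_j)+\Lip(f)\mass(V_j)\bigr)\prod_{i=1}^m\Lip(\pi_i),
\]
which tends to $0$; no uniform mass bound on the $T_j$ is needed since the bound comes entirely from $U_j$ and $V_j$. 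The principal obstacle is step two, namely confirming that the infinite gluing of the $Z_j$ along $\bar{X}_0$ introduces no shortcut on any single $Z_j$ and yields a separable completion; both issues reduce to the triangle inequality inside each individual $Z_j$ together with the countability of the family.
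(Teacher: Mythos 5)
Your argument is correct and essentially reproduces the proof of Theorem 4.2 in \cite{SorWen2}: one attains (or nearly attains) the infimum pairwise via compact spaces $Z_j$, glues the $Z_j$ along the common isometric image of $\bar{X}_0$ with the natural chain pseudo-metric, checks via the triangle inequality that each $Z_j$ still embeds isometrically, takes the completion (still separable as a countable union of compact pieces), and pushes the decompositions $U_j+\partial V_j$ forward to conclude flat and hence weak convergence. The only cosmetic difference is that the original proof can work with $\varepsilon$-almost-optimal fillings rather than invoking Theorem~\ref{achieved}, but this does not change the structure of the argument.
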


\begin{thm}\label{convergeto0} \cite{SorWen2}[Thm 4.3]
If a sequence of 
integral current spaces has
\be
M_{j}=\left(X_j, d_j, T_j\right) \Fto {\bf{0}}
\ee 
then
one may choose points $x_j\in X_j$ and a
separable
complete metric space, $Z$, and isometric embeddings  $\varphi_j: X_j \to Z$ such that
$\varphi_j(x_j)=z_0\in Z$ and
\be
d_F^Z(\varphi_{j\#}T_j, 0)\to 0
\ee
and thus
$\varphi_{j\#}T_j$ converges weakly to $0$ in $Z$ as well.
\end{thm}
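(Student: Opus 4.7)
The plan is to mirror the proof of Theorem~\ref{converge} but, since the limit is $\mathbf{0}$, to choose basepoints by hand and then glue individual flat realizations into one common ambient space. First, for each $j$ with $X_j \ne \emptyset$, I would apply Theorem~\ref{inf-dist-attained} to the pair $(M_j, \mathbf{0})$: this yields a compact metric space $Z_j$, an isometric embedding $\psi_j : \bar{X}_j \to Z_j$, and integral currents $U_j \in \intcurr_m(Z_j)$, $V_j \in \intcurr_{m+1}(Z_j)$ satisfying $\psi_{j\#} T_j = U_j + \partial V_j$ with $\mass(U_j) + \mass(V_j) = d_\Fm(M_j, \mathbf{0}) \to 0$. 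Then I would choose any basepoint $x_j \in X_j$ and set $z_j^0 := \psi_j(x_j) \in Z_j$. (If $X_j$ happens to be empty for some $j$, that index can be dropped since $M_j = \mathbf{0}$ already.)

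Next, I would construct the common target $Z$ by taking the disjoint union $\bigsqcup_j Z_j$ and identifying all the basepoints $z_j^0$ to a single point $z_0$. Equip the quotient with the metric $d(a,b) := d_{Z_j}(a,b)$ when $a,b \in Z_j$, and $d(a,b) := d_{Z_j}(a, z_j^0) + d_{Z_k}(z_k^0, b)$ when $a \in Z_j$ and $b \in Z_k$ with $j \ne k$. A routine case analysis verifies the triangle inequality, and since each $Z_j$ is compact the glued space is already complete (any Cauchy sequence either eventually stays inside one $Z_j$ and converges there, or converges to $z_0$) and separable (a countable union of separable pieces). The canonical inclusions $\iota_j : Z_j \hookrightarrow Z$ are then isometric embeddings, so $\varphi_j := \iota_j \circ \psi_j|_{X_j}$ gives an isometric embedding $X_j \to Z$ with $\varphi_j(x_j) = z_0$ for every $j$.

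Finally, I would transport the flat decomposition into $Z$ and conclude. Pushing forward by the isometric embedding $\iota_j$ preserves mass by~(\ref{lem-push-mass}), so $\varphi_{j\#} T_j = \iota_{j\#} U_j + \partial(\iota_{j\#} V_j)$ with
\[
\mass(\iota_{j\#} U_j) + \mass(\iota_{j\#} V_j) = \mass(U_j) + \mass(V_j) = d_\Fm(M_j,\mathbf{0}) \to 0,
\]
which immediately bounds $d_F^Z(\varphi_{j\#} T_j, 0)$ by the same quantity. Weak convergence $\varphi_{j\#} T_j \weaklyto 0$ in $Z$ then follows directly from~(\ref{eqn-mass}) applied to $\iota_{j\#}U_j$ and $\iota_{j\#}V_j$ on any test tuple $(f, \pi_1, \dots, \pi_m)$, noting that $\partial V_j$ evaluated on the tuple is $V_j(1, f, \pi_1, \dots, \pi_m)$ and hence controlled by $\mass(V_j) \Lip(f) \prod_i \Lip(\pi_i)$; in particular no uniform mass bound on $T_j$ itself is required.

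I expect the only real obstacle to be the bookkeeping in the gluing step: verifying the triangle inequality across three different components $Z_j$, $Z_k$, $Z_l$, and checking that the quotient is both separable and complete in spite of having no uniform diameter bound on the compact pieces $Z_j$. Neither point is genuinely hard, but each requires a short case split before one can safely treat the inclusions $\iota_j$ as honest isometric embeddings and invoke~(\ref{lem-push-mass}).
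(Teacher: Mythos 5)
Your approach --- realize each $d_{\mathcal{F}}(M_j, \mathbf{0})$ by a concrete flat decomposition in some $Z_j$, then wedge the $Z_j$ together at basepoints --- is the natural one and, as far as I can tell, matches the route taken in \cite{SorWen2}; your checks of the triangle inequality, completeness, separability, mass preservation under $\iota_j$, and the flat and weak convergence estimates are all sound. The one caveat worth flagging is that Theorem~\ref{inf-dist-attained} is stated only for \emph{precompact} integral current spaces, while Theorem~\ref{convergeto0} does not assume the $M_j$ precompact. To avoid invoking it, work directly from the infimum in Definition~\ref{def-flat1}: for each $j$ choose a complete metric space $Z_j$, an isometric embedding $\psi_j : \bar{X}_j \to Z_j$, and currents $U_j \in \intcurr_m(Z_j)$, $V_j \in \intcurr_{m+1}(Z_j)$ with $\psi_{j\#}T_j = U_j + \partial V_j$ and $\mass(U_j) + \mass(V_j) < d_{\mathcal{F}}(M_j, \mathbf{0}) + 1/j$; this is available by unwinding the two nested infima. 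You then lose compactness of $Z_j$, so for separability replace each $Z_j$ by the closure of $\psi_j(\bar{X}_j) \cup \spt U_j \cup \spt V_j$, which is still complete and is separable because the relevant mass measures are finite. With that substitution the wedge construction, the fact that each inclusion $\iota_j$ is distance-preserving (so (\ref{lem-push-mass}) applies), and all subsequent estimates go through verbatim.
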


Theorems~\ref{converge} and~\ref{convergeto0}
 combined with Ambrosio-Kirchheim's lower semicontinuity
 of mass [c.f. Remark~\ref{semi-mass}] imply the following:

\begin{thm}\label{semi-mass}
If a sequence of integral current spaces
$M_{j}$ converges in the intrinsic flat sense
to an 
integral current space,
 $M_\infty$, then
 \be
 \liminf_{i\to\infty} \mass(M_i) \ge \mass(M_\infty)
 \ee
 \end{thm}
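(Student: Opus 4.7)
The plan is to reduce the statement to the already-known lower semicontinuity of mass for weakly converging Ambrosio-Kirchheim currents in a single complete metric space, by using the isometric embedding theorems for intrinsic flat convergence (Theorems~\ref{converge} and~\ref{convergeto0}) to put all the currents into a common ambient space.

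First I would split into two cases according to whether the limit $M_\infty$ is the $\bf{0}$ space or not. If $M_\infty={\bf 0}$, then $\mass(M_\infty)=0$ and the inequality is trivial from the nonnegativity of mass, so invoking Theorem~\ref{convergeto0} is not even needed for the conclusion (though it would work).

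The substantive case is when $M_\infty=(X_\infty,d_\infty,T_\infty)$ is a genuine integral current space. Here I would apply Theorem~\ref{converge} to produce a separable complete metric space $Z$ together with isometric embeddings $\varphi_j:\bar X_j\to Z$ for $j\in\{1,2,\dots,\infty\}$ such that $\varphi_{j\#}T_j$ converges weakly to $\varphi_{\infty\#}T_\infty$ in $\intcurr_m(Z)$. Then I would invoke Ambrosio-Kirchheim's lower semicontinuity of mass under weak convergence of currents (stated just below Definition~\ref{def-weak} in the background) to obtain
\be
\liminf_{j\to\infty}\mass(\varphi_{j\#}T_j)\ge \mass(\varphi_{\infty\#}T_\infty).
\ee
Finally, since each $\varphi_j$ is an isometric embedding, (\ref{lem-push-mass}) gives $\mass(\varphi_{j\#}T_j)=\mass(T_j)=\mass(M_j)$ for $j\in\{1,2,\dots,\infty\}$, and substituting yields the desired inequality.

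There is no real obstacle here, as the essential analytic content has been transferred to Ambrosio-Kirchheim's lower semicontinuity theorem; the only work is verifying that the common embedding produced by Theorem~\ref{converge} lets us compare masses honestly, which is immediate from (\ref{lem-push-mass}). The mildly delicate point worth mentioning is that the statement is about $\mass(M_j)=\mass(T_j)$, i.e.\ mass of the current structure of the space, rather than any Hausdorff measure of the underlying metric space; using the isometric embeddings ensures we are consistently tracking this same notion throughout.
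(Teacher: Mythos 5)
Your proposal is correct and takes essentially the same route as the paper, which explicitly states that Theorem~\ref{semi-mass} follows from Theorems~\ref{converge} and~\ref{convergeto0} combined with Ambrosio-Kirchheim's lower semicontinuity of mass. You have simply filled in the straightforward details (the trivial ${\bf 0}$ case, the weak convergence from Theorem~\ref{converge}, and the mass preservation under isometric embeddings from (\ref{lem-push-mass})).
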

 
 Note that Theorems~\ref{converge}, ~\ref{convergeto0} and ~\ref{semi-mass} do not
 require uniform bounds on the masses or volumes of the $M_j$ and $\partial M_j$.

\subsection{Balls in Integral Current Spaces}\label{subsect-Iballs}

Many theorems in Riemannian geometry involve open and closed balls,
\be
B(p,r)= \{ x \in X : \, d_X(x,p)<r\} \quad 
\bar{B}(p,r)= \{ x \in X : \, d_X(x,p)\le r\}.  
\ee  
Here a few basic lemmas are proven about balls in integral current
spaces.   These lemmas are new but so basic that they are
best placed in this background section.

\begin{lem}\label{lem-ball}
A ball in an integral current space, $M=\left(X,d,T\right)$,
with the current restricted from the current structure of the Riemannian manifold is an integral current space itself, 
\be\label{Spr}
S(p,r):=\left(\,\set(T\rstr B(p,r)),d,T\rstr B\left(p,r\right)_{\textcolor{white}{.}}\right)
\ee
for almost every $r > 0$.   Furthermore,
\be\label{ball-in-ball}
B(p,r) \subset \set(S(p,r))\subset \bar{B}(p,r)\subset X.
\ee
\end{lem}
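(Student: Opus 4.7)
The plan is to derive the integrality of $T \rstr B(p,r)$ from the Ambrosio-Kirchheim Slicing Theorem, then read off the inclusions (\ref{ball-in-ball}) from local density computations.

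First I would apply Theorem~\ref{theorem-slicing} to the $1$-Lipschitz function $f : \bar X \to \R$ defined by $f(x) = -d_{\bar X}(p,x)$. Since $f^{-1}(-r,\infty) = B(p,r)$, the slicing theorem furnishes, for every $r>0$ outside an $\mathcal L^1$-null set of values, a current $T \rstr B(p,r) \in \intcurr_m(\bar X)$. For such $r$, set $Y := \set(T\rstr B(p,r))$. Integer rectifiability of $T \rstr B(p,r)$, delivered by the slicing theorem, implies that $\|T\rstr B(p,r)\|$ is concentrated on $Y$, so $T\rstr B(p,r)$ may be viewed as an element of $\intcurr_m(\bar Y)$ by restricting its parametrization; thus $S(p,r)=(Y,\,d,\,T \rstr B(p,r))$ is an integral current space in the sense of Definition~\ref{defn-int-curr-space}.

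The inclusions in (\ref{ball-in-ball}) are then pure density bookkeeping. If $x \in \bar X \setminus X$ or $d(p,x) > r$, then a sufficiently small metric ball around $x$ has zero $\|T\rstr B(p,r)\|$-measure (in the first case because $\|T\|$ is concentrated on $\set(T)=X$ and $\|T\rstr B(p,r)\| \le \|T\|$, in the second because $B(x,\epsilon) \cap B(p,r) = \emptyset$ for $\epsilon < d(p,x)-r$), so $\Theta_{*m}(\|T\rstr B(p,r)\|, x) = 0$ and $x \notin Y$; hence $Y \subset X \cap \{d(p,\cdot) \le r\} = \bar B(p,r)$. Conversely, if $x \in B(p,r)$, then $x \in X = \set(T)$ and, for any $\epsilon < r-d(p,x)$, $B(x,\epsilon) \subset B(p,r)$, so
$$\|T\rstr B(p,r)\|(B(x,\epsilon)) = \|T\|(B(x,\epsilon)),$$
whence $\Theta_{*m}(\|T\rstr B(p,r)\|,x) = \Theta_{*m}(\|T\|,x) > 0$ and $x \in Y$.

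The only delicate step is the invocation of the slicing theorem: one must choose $f=-d(p,\cdot)$ rather than $d(p,\cdot)$ so that a super-level set of $f$ is the open ball $B(p,r)$, and accept that the resulting integrality of $T \rstr B(p,r)$ holds only for almost every $r$. Once this is in hand, the inclusions are essentially tautological consequences of the fact that restriction by a Borel indicator simply truncates the mass measure, together with the definition of $\set$ via lower density.
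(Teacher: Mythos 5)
Your proof is correct and follows essentially the same route as the paper's: apply the Ambrosio-Kirchheim Slicing Theorem to $f=-d(p,\cdot)$ to obtain $T\rstr B(p,r)\in\intcurr_m(\bar X)$ for a.e.\ $r$, then read off the inclusions by comparing lower densities of $\|T\rstr B(p,r)\|$ and $\|T\|$ at points inside, on the boundary of, and outside the ball. The only cosmetic difference is that the paper unwinds the boundary-slice identity to bound $\mass(\partial(T\rstr B(p,r)))$ directly rather than citing the slicing theorem's conclusion (\ref{rstr-ok}), and you make explicit the (often elided) bookkeeping step that the restricted current lives in $\intcurr_m(\bar Y)$ with $Y=\set(T\rstr B(p,r))$, as required by Definition~\ref{defn-int-curr-space}.
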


\begin{proof}
First one shows that $S(p,r)=T\rstr B(p,r)$ is an integer rectifiable current.   
Let $\rho_p: \bar{X} \to \R$ be the distance function from $p$.  Then
by Ambrosio-Kirchheim's Slicing Theorem, applied to
$f(x)=-\rho_p(x)$, one has
\begin{eqnarray}
\partial ( T \rstr B(p,r) ) &=& \partial ( T \rstr \rho_p^{-1}(-\infty, r) )\\
& = & <T, -\rho_p, -r> + (\partial T) \rstr \rho_p^{-1}(-\infty,r) \\
& = & <T, -\rho_p, -r> + (\partial T) \rstr B(p,r)
\end{eqnarray}
where the mass of the slice $<T,\rho_p,r>$ is bounded for almost every $r$.
Thus
\begin{eqnarray}
\,\,\,\,\mass(\partial ( T \rstr B(p,r) ))
& \le & \mass(<T, -\rho_p, -r>) + \mass((\partial T) \rstr B(p,r)) \\
& \le & \mass(<T, -\rho_p, -r>) \,\,\,\,+ \,\,\,\,\,\mass(\partial T)\,\,\,\,\,\,<\,\,\,\,\, \infty.
\end{eqnarray}
So $S(p,r)$ is an integral current in $\bar{X}$ for almost every $r$.

Next one proves (\ref{ball-in-ball}).
Recall that $x\in \set(S(p,r)) \subset \bar{X}$ iff
\begin{eqnarray} \label{ball-in-ball-1}
0&<&\liminf_{s\to 0} \frac{||S(p,r)||(B(x,s))}{\omega_m s^m} \\
&=&\liminf_{s\to 0} \frac{||T||(B(p,r)\cap B(x,s))}{\omega_m s^m}
\end{eqnarray}
If $x\in B(p,r) \subset X$, then 
eventually $B(x,s)\subset B(p,r)$ and the liminf
is just the lower density of $T$ at $x$.  Since $x\in X=\set(T)$,
this lower density is positive.
If $x\in \bar{X}\setminus X$, then the liminf is $0$ because 
it is smaller than the density of $T$ at $x$, which is $0$.
If $x\notin \bar{B}(p,r)$, then the liminf is 0 because
eventually the balls do not intersect.   
\end{proof}

One may imagine that it is possible that a ball is cusp shaped
and that some points in the
closure of the ball that lie in $X$ do not lie in the set of $S(p,r)$.
In a manifold, the set of $S(p,r)$ is a closed ball:

\begin{lem}
When $M$ is a Riemannian manifold with boundary we have
\be
S\left(p,r\right)=\left(\bar{B}\left(p,r\right),d,T\rstr B\left(p,r\right)\right)
\ee
is an integral current space for all $r > 0$.
\end{lem}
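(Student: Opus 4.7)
The plan is to upgrade both conclusions of Lemma~\ref{lem-ball} using the smooth Riemannian structure: first, replacing $\set(T\rstr B(p,r))$ by the full closed ball $\bar{B}(p,r)$, and second, removing the ``almost every'' qualifier. I would start from the inclusions
\[
B(p,r) \subset \set(S(p,r)) \subset \bar{B}(p,r)
\]
already established in Lemma~\ref{lem-ball}, so that the only work is on the distance sphere $\{x : d(p,x)=r\}$ and on the exceptional radii $r$.

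For the set equality, I would show that every $x$ with $d(p,x)=r$ has strictly positive lower density, $\Theta_{*m}(\|T\rstr B(p,r)\|, x) > 0$. Let $\gamma:[0,r]\to M$ be a minimizing geodesic from $p$ to $x$. For small $s>0$ the midpoint $\gamma(r-s/2)$ lies in $B(p,r)\cap B(x,s)$, and using a bi-Lipschitz Riemannian chart around $x$ I would show that a solid cone (or half-ball, or quarter-ball if $x\in\partial M$) around this midpoint sits inside $B(p,r)\cap B(x,s)$. Since
\[
\|T\rstr B(p,r)\|(B(x,s)) = \vol\bigl(B(p,r)\cap B(x,s)\bigr) \geq c\, s^m
\]
for a positive constant $c$ depending only on the local metric and (if applicable) the local solid angle of $M$ at $x$, the lower density is positive, so $x\in \set(S(p,r))$.

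For the ``every $r>0$'' improvement, I would revisit the slicing argument in Lemma~\ref{lem-ball} and replace the Slicing Theorem bound on $\mass(\langle T,-\rho_p,-r\rangle)$ (which was only finite for a.e.\ $r$) by a direct geometric bound. Because $M$ is a Riemannian manifold, every point of $\partial B(p,r)$ is reached by a minimizing geodesic from $p$, so
\[
\partial B(p,r) \subset \exp_p\bigl(\{v\in T_p M : |v|=r\}\cap D_p\bigr),
\]
where $D_p\subset T_pM$ is the compact star-shaped domain containing the segment-producing tangent vectors. Since $\exp_p$ is smooth (hence Lipschitz on compact sets), $\mathcal H^{m-1}(\partial B(p,r))$ is finite for every $r>0$. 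Combined with $\mass\bigl((\partial T)\rstr B(p,r)\bigr)\le \vol(\partial M)<\infty$, this shows $\partial\bigl(T\rstr B(p,r)\bigr)$ has finite mass for every $r$, so $T\rstr B(p,r)\in\intcurr_m(\bar X)$ for every $r > 0$.

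The main obstacle is the density computation at the ``worst'' boundary points of the ball, namely points $x$ that are simultaneously cut points of $p$ and corner points of $\partial M$. Even there the argument goes through as long as there is a positive Riemannian solid angle of $M$ contained in a tubular neighborhood of some minimizing segment to $x$ inside $B(p,r)$; this is automatic because, in any smooth chart, $B(p,r)$ has Lipschitz boundary near $x$ and $M$ itself has Lipschitz boundary there, so neither region degenerates below positive volume density at $x$.
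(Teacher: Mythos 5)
Your two-part plan mirrors the paper's structure (show the boundary current has finite mass for every $r$, then show every point at distance exactly $r$ from $p$ has positive lower density), and the density argument is basically sound: the inclusion $B(\gamma(s/2),s/2)\subset B(x,s)\cap B(p,r)$ follows from the triangle inequality alone and then the smooth Riemannian structure (or a half-ball chart near $\partial M$) gives the lower volume bound — you over-engineer this by reaching for charts and cones where the paper simply uses the metric ball around the midpoint. The one worry you flag, about cut points meeting corner points, is not an obstacle; the midpoint ball argument handles it without distinguishing cases.

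The real gap is in your argument that $\partial(T\rstr B(p,r))$ has finite mass for \emph{every} $r$. You claim $\partial B(p,r)\subset \exp_p\bigl(\{v\in T_pM:|v|=r\}\cap D_p\bigr)$ because every point of the sphere is reached by a minimizing geodesic from $p$. This is false for manifolds with boundary, which is exactly the case the lemma covers. Minimizing paths in a manifold with boundary need not be Riemannian geodesics — they can run along $\partial M$ — so they are generally not in the image of $\exp_p$ (picture an annulus $\{1\le |x|\le 3\}\subset\R^2$ with $p=(2,0)$: points near $(-2,0)$ are reached only by curves hugging the inner boundary circle). Thus the Lipschitz bound on $\exp_p$ does not control $\mathcal{H}^{m-1}(\partial B(p,r))$. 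Moreover, even granting finiteness of $\mathcal{H}^{m-1}(\partial B(p,r))$, you still have not explained why that controls $\mass(\langle T,-\rho_p,-r\rangle)$; one needs to actually \emph{identify} the boundary current. The paper does both at once by computing $\partial(T\rstr B(p,r))(f,\pi_1,\ldots,\pi_m)$ directly as $\int_{B(p,r)} df\wedge d\pi_1\wedge\cdots\wedge d\pi_m$ and applying Stokes' theorem to get $\int_{\partial B(p,r)} f\,d\pi_1\wedge\cdots\wedge d\pi_m$, so that $\mass(\partial(T\rstr B(p,r)))=\vol_{m-1}(\partial B_p(r))<\infty$ — no detour through the exponential map, no slicing theorem, and no ``a.e.\ $r$'' qualifier to remove.
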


\begin{proof}
In this case,
\begin{eqnarray}
\partial (T \rstr B(p,r)) (f, \pi_1,...,\pi_m)
&=& (T \rstr B(p,r) ) (1,f, \pi_1,...,\pi_m)\\
&=& T (\chi_{B(p,r)},f, \pi_1,...,\pi_m)\\
&=& \int_M \chi_{B(p,r)} df\wedge d\pi_1\wedge \cdots \wedge d\pi_m\\
&=& \int_{B(p,r)} df\wedge d\pi_1\wedge \cdots \wedge d\pi_m\\
&=& \int_{B(p,r)} df\wedge d\pi_1\wedge \cdots \wedge d\pi_m\\
&=& \int_{\partial B(p,r)} f \, d\pi_1\wedge \cdots \wedge d\pi_m
\end{eqnarray}
So $\mass(\partial (T\rstr B(p,r))) = \vol_{m-1}(\partial B_p(r))<\infty$.

Observe that $\bar{B}(p,r)\subset M$ is $\set(S(p,r))$, 
by (\ref{ball-in-ball-1}).   If $d(x,p)=r$, then let $\gamma:[0,r]\to M$
be a curve parametrized by arclength running minimally
from $x$ to $p$.  Then
\be
B(\gamma(s/2), s/2) \subset B(x,s)\cap B(p,r).
\ee 
and
\begin{eqnarray} \label{ball-in-ball-2}
\liminf_{s\to 0} \frac{||S(p,r)||(B(x,s))}{\omega_m s^m} 
&=&\liminf_{s\to 0} \frac{||T||(B(p,r)\cap B(x,s))}{\omega_m s^m}\\
&\ge&\liminf_{s\to 0} \frac{||T||(B(\gamma(s/2), s/2)}{\omega_m s^m}\\
&\ge&\liminf_{s\to 0} \frac{\vol(B(\gamma(s/2), s/2)}{2^m\omega_m (s/2)^m}
\ge \frac{1/2}{2^m}
\end{eqnarray}
because in a manifold with boundary, the balls eventually lie within a 
half plane chart where all tiny balls are either uniformly close to a Euclidean
ball or half a Euclidean ball.
\end{proof}

\begin{example}\label{bad-level}
There exist integral current spaces with balls that
are not integral current spaces.
\end{example}

\begin{proof}
Suppose one defines an integral current space, $(X,d,T)$ where
$X=S^2$ with the following generalized metric
\be
g= dr^2 + (\cos(r)/r^2)^2 d\theta^2 \qquad r\in [-\pi/2,\pi/2].
\ee
The metric is defined as
\be
d(p_1, p_2) =\inf\{ L_g(\gamma): \,\, \gamma(0)=p_1, \, \gamma(1)=p_2\}
\ee
where
\be
L_g(\gamma)=\int_0^1 g(\gamma'(t), \gamma'(t))^{1/2} \, dt
\ee
as in a Riemannian manifold.   In fact this metric space consists
of two open isometric Riemannian manifolds diffeomorphic to disks whose
metric completions are glued together along corresponding points.
The current structure $T$ is defined by
\begin{eqnarray}
T(f, \pi_1,...,\pi_m) &=& \int_{-\pi/2}^{\pi/2} \int_{S^1} f \, d\pi_1 \wedge \cdots \wedge d\pi_m  \\
&=& \int_{-\pi/2}^{0} \int_{S^1} f \, d\pi_1 \wedge \cdots \wedge d\pi_m  
\\
&&\qquad +\,\,\, \int_{0}^{\pi/2} \int_{S^1} f \, d\pi_1 \wedge \cdots \wedge d\pi_m  
\end{eqnarray}
so that $\partial T=0$ and
\be
\mass(T)\,=\, \vol_m\left(r^{-1}[-\pi/2,0) \right)
+\vol_m\left(r^{-1}(0,\pi/2] \right) 
\,\,<\,\infty.
\ee

Setting $p$ such that $r(p)=-\pi/2$, then
$S(p, \pi/2)$ is a rectifiable current but its boundary does
not have finite mass.  This can be see by taking $q$ such
that $(r(q),\theta(q))=(0,0)$, setting $\pi_1=\rho_q$ 
and $f=\rho_p=r+\pi/2$ and observing that
\begin{eqnarray}
|\partial(S(p,\pi/2))(f, \pi_1)| &=& |S(p,\pi/2)(1, f, \pi_1)|\\
&=& \left| \int_{B(p,\pi/2)} df \wedge d\pi_1 \right| \\
&\ge& \left| \int_{B(p, \pi/2-\delta)} df \wedge d\pi_1 \right| \\
&=& \left| \int_{\partial B(p,\pi/2-\delta)} f \, d\pi_1 \right| \\
&=& \left| \int_{\theta=-\pi}^{\pi} (\pi/2-\delta) \, \frac{d\pi_1}{d\theta} \, d\theta \right| \\
&=& \left| \int_{\theta=-\pi}^{\pi} (\pi/2-\delta) \, \frac{\cos(r)}{r^2} \, d\theta \right| \\
&\ge &  (\pi/2 -\delta) \,\frac{\,\cos(-\delta)\,}{\delta^2} \,2\pi
\end{eqnarray}
which is unbounded as $\delta$ decreases to $0$.
\end{proof}

\begin{rmrk}\label{outside-balls}
Note that
the outside of the ball, $(M\setminus B(p,r), d, T-S(p,r))$, is
also an integral current space for almost every $r>0$.
\end{rmrk}

\begin{rmrk}\label{rmrk-ball-bound}
In some of the theorems in this paper, it will be important to
estimate $d_{\mathcal{F}}(S(p,r), \bf{0})$.   There are various 
ways to estimate this value.   First observe that
\be
d_{\mathcal{F}}\left(S(p,r), {\bf{0}}\right) 
\le \min \left\{\, \mass(S(p,r)),_{\textcolor{white}{,}} \mass(\partial(S(p,r))\right\}.
\ee
In addition, if one finds a comparison integral current space, $N$, 
such that
\be
d_{\mathcal{F}}(S(p,r), N) < d_{\mathcal{F}}(N,{\bf{0}})/2
\ee
then by the triangle inequality
\be
d_{\mathcal{F}}(S(p,r), {\bf{0}}) > d_{\mathcal{F}}(N,{\bf{0}})/2.
\ee
Recall that in joint work with Wenger \cite{SorWen2}, in
joint work with Lakzian \cite{Lakzian-Sormani}, and in joint
work with Lee \cite{LeeSormani1} various means of
estimating the intrinsic flat distance are provided.  
\end{rmrk}

\section{Converging Points and Diameters}

In this section the limits of points in
sequences of integral current spaces that converge in the
intrinsic flat sense are examined.   See Definitions~\ref{point-conv} and~\ref{point-Cauchy}
and Lemma~\ref{to-a-limit}.  The diameter is then proven to be lower semicontinuous.  See Definition~\ref{defn-diam} and  Theorem~\ref{diam-semi}. \footnote{Some of these notions were original defined in an older version of \cite{Sormani-properties} 
but they are now moved here
and will only be reviewed there.}

Before beginning, recall that Theorem~\ref{converge} which was proven in work of the author
with Wenger in \cite{SorWen2} states that a sequence of manifolds
which converges in the intrinsic flat sense can be isometrically embedded
into a common metric space.    This theorem is applied
to define the notion
of a converging sequence of points:

\begin{defn} \label{point-conv}
If $M_i=(X_i, d_i,T_i) \Fto M_\infty=(X_\infty, d_\infty,T_\infty)$, 
then one says $x_i\in X_i$ are a converging sequence that converge to
$x_\infty\in \bar{X}_\infty$ if there exists a complete metric space
$Z$ and isometric embeddings 
$\varphi_i:X_i\to Z$ such that 
$\varphi_{i\#} T_i \Fto \varphi_{\infty\#}T_\infty$ and 
$\varphi_i(x_i) \to \varphi_\infty(x_\infty)$.   One says 
a collection of
points, $\{p_{1,i}, p_{2,i},...p_{k,i}\}$,
converges to a corresponding collection of points, 
$\{p_{1,\infty}, p_{2,\infty},...p_{k,\infty}\}$, if 
$\varphi_{i}(p_{j,i}) \to \varphi_\infty(p_{j, \infty})$ for $j=1..k$.
\end{defn}

Unlike in Gromov-Hausdorff convergence, there is a possibility of disappearing sequences of points:

\begin{defn} \label{point-Cauchy}
If $M_i=(X_i, d_i,T_i) \Fto M_\infty=(X_\infty, d_\infty,T_\infty)$, then one says $x_i\in X_i$ 
are Cauchy if there exists a complete metric space
$Z$ and isometric embeddings 
$\varphi_i:M_i\to Z$ such that 
$\varphi_{i\#} T_i \Fto \varphi_{\infty\#}T_\infty$ and 
$\varphi_i(x_i) \to z_\infty \in Z$.   One says the
sequence is disappearing if $z_\infty \notin \varphi_\infty(X_\infty)$.
One says the sequence has no limit in $\bar{X}_\infty$ if
$z_\infty \notin \varphi_\infty(\bar{X}_\infty)$.
\end{defn}

\begin{rmrk}\label{rmrk-disappear}
Examples with disappearing splines from \cite{SorWen2}
demonstrate that there exist Cauchy sequences of points
which disappear.  In fact $z_\infty$ may not even lie in the metric
completion of the limit space, $\varphi_\infty(\bar{X}_\infty)$..   
\end{rmrk}

\begin{lem}\label{to-a-limit}
If a sequence of integral current spaces, $M_i=\left(X_i,d_i,T_i\right)\in \mathcal{M}_0^m$, 
converges to 
an integral current space, $M=\left(X,d,T\right)\in \mathcal{M}_0^m$, in the intrinsic flat sense, then every point $x$ in the limit space
$X$ is the limit of points $x_i\in M_i$.  
In fact there exists a sequence of maps $F_i: X \to X_i$
such that $x_i=F_i(x)$ converges to $x$ and
\be
\lim_{i\to \infty} d_i(F_i(x), F_i(y))= d(x,y) \,\,\, \forall x,y\in X.
\ee
\end{lem}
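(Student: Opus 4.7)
The plan is to use Theorem~\ref{converge} to embed the sequence isometrically into a common complete separable metric space $Z$ via maps $\varphi_i:\bar{X}_i \to Z$ and $\varphi_\infty:\bar{X}_\infty \to Z$ with $d_F^Z(\varphi_{i\#}T_i,\varphi_{\infty\#}T_\infty)\to 0$. Once inside $Z$, the problem reduces to finding, for each $x\in X$, a sequence of points $F_i(x)\in X_i$ with $\varphi_i(F_i(x)) \to \varphi_\infty(x)$ in $Z$. Granting this, the distance statement is automatic: since $\varphi_i$ and $\varphi_\infty$ are isometric embeddings and $d_Z$ is continuous,
\be
d_i(F_i(x),F_i(y)) = d_Z(\varphi_i(F_i(x)),\varphi_i(F_i(y))) \to d_Z(\varphi_\infty(x),\varphi_\infty(y)) = d(x,y).
\ee

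The crucial ingredient for locating $F_i(x)$ is lower semicontinuity of the mass measure on open sets. Starting from the definition of flat distance, one writes $\varphi_{i\#}T_i - \varphi_{\infty\#}T_\infty = U_i + \partial V_i$ with $\mass(U_i)+\mass(V_i)\to 0$; testing against any $(f,\pi_1,\dots,\pi_m)\in \mathcal{D}^m(Z)$ and using (\ref{eqn-mass}) with the bounded Lipschitz hypothesis on $f$, one checks directly that $\varphi_{i\#}T_i$ converges weakly to $\varphi_{\infty\#}T_\infty$ in the sense of Definition~\ref{def-weak}. By the Ambrosio--Kirchheim lower semicontinuity of mass on open sets recorded in the background (cf.\ the discussion preceding Theorem~\ref{AK-compact}), for every open $U\subset Z$,
\be
\liminf_{i\to\infty}\|\varphi_{i\#}T_i\|(U) \;\geq\; \|\varphi_{\infty\#}T_\infty\|(U).
\ee
Now for any $x\in X = \set(T_\infty)$, Definition~\ref{defn-set} says that the lower $m$-density of $\|\varphi_{\infty\#}T_\infty\|$ at $\varphi_\infty(x)$ is strictly positive, so $\|\varphi_{\infty\#}T_\infty\|(B(\varphi_\infty(x),r))>0$ for every $r>0$. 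Applying the semicontinuity to $U=B(\varphi_\infty(x),r)$, and recalling that $\set(\varphi_{i\#}T_i)=\varphi_i(X_i)$, one concludes that for all sufficiently large $i$ the ball $B(\varphi_\infty(x),r)\cap \varphi_i(X_i)$ is nonempty.

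To obtain a coherent map $F_i$ defined on all of $X$, for each $x\in X$ and each $i$ I would choose $F_i(x)\in X_i$ satisfying
\be
d_Z(\varphi_i(F_i(x)),\varphi_\infty(x)) \;<\; \inf_{y\in X_i} d_Z(\varphi_i(y),\varphi_\infty(x)) + \tfrac{1}{i}.
\ee
The previous paragraph shows that the infimum on the right tends to zero as $i\to\infty$, hence $\varphi_i(F_i(x))\to\varphi_\infty(x)$ for every $x\in X$, which is exactly what the two conclusions of the lemma require.

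The main obstacle is the verification that flat convergence of $\varphi_{i\#}T_i$ to $\varphi_{\infty\#}T_\infty$ inside $Z$ produces the needed weak convergence, and hence lower semicontinuity of mass on open balls, \emph{without} invoking a priori mass bounds. This is where one must argue carefully from the defining decomposition $U_i+\partial V_i$ rather than citing Wenger's equivalence as a black box; once this is in hand, the remaining steps are a formal near-minimizer selection and the continuity of $d_Z$.
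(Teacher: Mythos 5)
Your argument is correct and follows essentially the same route as the paper's proof: embed into a common $Z$ via Theorem~\ref{converge}, use lower semicontinuity of mass on open balls about $\varphi_\infty(x)$ to force $\set(\varphi_{i\#}T_i)=\varphi_i(X_i)$ to meet each such ball for large $i$, then select $F_i(x)$ nearby and invoke the isometric embeddings for the distance claim. The only cosmetic difference is your near-minimizer selection with $1/i$ slack versus the paper's explicit diagonalization over thresholds $N_{j,x}$, and the obstacle you flag at the end is not actually one, since Theorem~\ref{converge} already asserts the weak convergence directly.
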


This sequence of maps $F_i$ are not uniquely defined and
are not even unique up to isometry.

\begin{proof}
By Theorem~\ref{converge} there exists a common metric space
$Z$ and isometric embeddings $\varphi_i: X_i \to Z$ and $\varphi:X \to Z$ such that
\be 
\varphi_{\#}T-\varphi_{i\#}T_i=U_i +\partial V_i
\ee
where $m_i=\mass\left(U_i\right)+\mass\left(V_i\right)\to 0$.
So $\varphi_{i\#}T_i$ converges in the flat and the weak sense to $\varphi_\# T$.

Let $\rho_x$ be the distance
function from $\varphi\left(x\right)$.
Since $x\in \spt(T)$, for any $\varepsilon>0$, 
\be
||\varphi_{\#}T||(\rho_x^{-1}[0,\varepsilon))>0.
\ee
By the lower semicontinuity of mass, 
\be
\liminf_{i\to\infty} 
||\varphi_{i\#}T_i||\left(\rho_x^{-1}[0,\varepsilon)\right)
\ge ||\varphi_{\#}T||\left(\rho_x^{-1}[0,\varepsilon)\right)>0.
\ee
In particular,
\be
\exists N_{\epsilon,x} \in \N \,\,s.t.
\,\,\varphi_{i\#}T_i\rstr \left(\rho_x^{-1}[0,\varepsilon)\right)
\neq 0 \qquad \forall i \ge N_{\epsilon,x}.
\ee
  So for all $x\in X$
and any $j\in \N$
\be \label{eqn-sie-here-compact}
\exists N_{j,x}\,s.t.\, \exists s_{i,j,x} \in \set(\varphi_{i\#}T) \cap B\left(x,1/j \right)
\qquad \forall i \ge N_{j,x}.
\ee
Without loss of generality, assume $N_{j,x}$ is increasing in $j$.
For $i\in \{1,..., N_{1,x}\}$ take $j_i=1$.   Then for
$i\in \{N_{j-1,x} +1, ..., N_{j,x}\}$ let $j_i=j$.  Thus $i\ge N_{j_x}$
Let
\be
x_i= \varphi_i^{-1}(s_{i, j_i,x}).
\ee
Then $\varphi_i(x_i) \in B(x, 1/j_i)$ and $\varphi_i(x_i)\to \varphi(x)$.

Since this process can be completed for any $x\in X$, one
has defined maps $F_i: X \to X_i$ such that 
\be
\varphi_i(F_i(x)) \to \varphi(x).
\ee
Finally, for all $x,y\in X$, 
\be
d_i(F_i(x),F_i(y))= d_Z(\varphi_i(F_i(x)), \varphi_i(F_i(y)))
\to d_Z(\varphi(x), \varphi(y)) =d(x,y).
\ee
\end{proof}

\begin{defn}\label{defn-diam}
Like any metric space, one can define the diameter
of an integral current space, $M=(X,d,T)$, to be
\be
\diam(M)=\sup\left\{ d_X(x,y): \,\, x, y\in X\} \in [0,\infty]\right\}.
\ee
In addition, explicitly define the diameter of the
$0$ integral current space to be $0$.
A space is bounded if the diameter is finite.
\end{defn}

\begin{thm} \label{diam-semi}
Suppose $M_i \Fto M$ are integral current spaces then
\be
\diam(M) \le \liminf_{i\to \infty} \diam(M_i) \subset [0,\infty]
\ee
\end{thm}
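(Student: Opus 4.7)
The plan is to reduce the claim to a pointwise statement by constructing, for every pair of points in the limit, approximating sequences in the $M_i$ whose distances converge. The natural tool for this is Lemma~\ref{to-a-limit}, which was proven just above and which produces exactly such approximating sequences along a common isometric embedding $Z$.

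First I would dispose of the trivial case: if $M = \mathbf{0}$, then $\diam(M) = 0$ by Definition~\ref{defn-diam}, and the inequality holds since the right-hand side is a liminf of nonnegative numbers. So assume $M = (X,d,T) \neq \mathbf{0}$. Since $\mathbf{0}$ corresponds to $T = 0$, we have $\mass(M) > 0$. By the lower semicontinuity of mass under intrinsic flat convergence (Theorem~\ref{semi-mass}), $\liminf_{i\to\infty} \mass(M_i) \ge \mass(M) > 0$, so all but finitely many $M_i$ are nonzero and therefore have well-defined, nonnegative diameters; the finitely many zero terms do not affect the liminf.

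Next, fix arbitrary $x, y \in X$. I would invoke Lemma~\ref{to-a-limit} to obtain maps $F_i : X \to X_i$ such that
\[
\lim_{i\to\infty} d_i\bigl(F_i(x), F_i(y)\bigr) = d(x,y).
\]
For each $i$ large enough that $M_i \neq \mathbf{0}$, one has $d_i(F_i(x), F_i(y)) \le \diam(M_i)$ directly from Definition~\ref{defn-diam}. Passing to the liminf,
\[
d(x,y) \;=\; \lim_{i\to\infty} d_i\bigl(F_i(x), F_i(y)\bigr) \;\le\; \liminf_{i\to\infty} \diam(M_i).
\]
Since $x, y \in X$ were arbitrary, taking the supremum over all such pairs yields $\diam(M) \le \liminf_{i\to\infty}\diam(M_i)$, completing the argument.

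The main (very mild) obstacle is simply the $\mathbf{0}$-space bookkeeping: one has to notice that Lemma~\ref{to-a-limit} presupposes a nonzero limit and that the approximating $M_i$ are eventually nonzero, which is ensured by lower semicontinuity of mass. There is no deeper difficulty because the liminf is one-sided, so there is no need to rule out an upper-semicontinuity failure (indeed, diameter can jump down in the limit, as the spline examples of \cite{SorWen2} show, which is consistent with the claim).
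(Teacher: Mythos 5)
Your proof is correct and follows essentially the same route as the paper's: both reduce to the nonzero case, invoke Lemma~\ref{to-a-limit} to produce approximating sequences $x_i, y_i \in X_i$ with $d_i(x_i,y_i) \to d(x,y)$, and bound $d_i(x_i,y_i) \le \diam(M_i)$ before taking $\liminf$. The only cosmetic difference is that you take a supremum over all pairs while the paper uses an $\epsilon$-approximation of $\diam(M)$, and you add a (correct but unneeded) remark via Theorem~\ref{semi-mass} that the $M_i$ are eventually nonzero, whereas the paper dispatches the $\mathbf{0}$-space edge case simply by noting the right-hand side is nonnegative.
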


\begin{proof}
Note that by the definition, $\diam(M_i)\ge 0$, so
the liminf is always $\ge 0$.  Thus the inequality is trivial when
$M$ is the $0$ space.  Assuming $M$ is not the $0$ space, 
for any
$\epsilon>0$, there exists $x,y\in X$ such that 
\be
\diam(M) \le d(x,y) +\epsilon.
\ee
By Lemma~\ref{to-a-limit}, there exists $x_i, y_i\in X_i$
converging to $x,y \in X$ so that
\be
\diam(M) \le \lim_{i\to\infty} d_i(x_i,y_i) +\epsilon
\le \liminf_{i\to\infty} \diam(X_i) +\epsilon. 
\ee
\end{proof}

\section{Convergence of Balls and Spheres}

In this section the following key lemma concerning the convergence of balls and spheres is proven.   It is an essential
ingredient when trying to prove intrinsic flat limits are not the zero space
or that points do not disappear.   See Remark~\ref{rmrk-balls-converge}.
It will be applied to prove Theorem~\ref{BW-BASIC}, Theorem~\ref{Arz-Asc-Unif-Local-Isom}, and Example~\ref{ex-no-limit}. 

\begin{lem}\label{balls-converge}
If $M_j =(X_j,d_j,T_j)\Fto M_\infty=(X_\infty,d_\infty,T_\infty)$ and $p_j \to p_\infty\in \bar{X}_\infty$, then there exists a subsequence of $M_j$ also denoted
$M_j$  such that for almost every $r>0$,
\be
S(p_j,r)= \left(\bar{B}\left(p_j,r\right),d_j,T_j\rstr B\left(p_j,r\right)\right)
\ee
are integral current spaces for $j\in \{1,2,...,\infty\}$ and 
\be
S(p_j,r) \Fto S(p_\infty,r).
\ee
If $p_j$ are Cauchy with no limit in $\bar{X}_\infty$
then there exists $\delta>0$ such that
for almost every $r\in (0,\delta)$ 
such that
$S(p_j,r)$
are integral current spaces for $j\in \{1,2,...\}$ and 
\be\label{to-0}
S(p_j,r) \Fto 0.
\ee
If $M_j \Fto \bf{0}$ then for almost every $r$ and for all sequences $p_j$
one has (\ref{to-0}).
\end{lem}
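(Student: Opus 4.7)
The plan is to use \thmref{converge} (or \thmref{convergeto0} in the third case) to isometrically embed every $M_j$ into a common complete metric space $Z$ via $\varphi_j$, so that in the first case $z_j := \varphi_j(p_j)$ converges to $z_\infty := \varphi_\infty(p_\infty)$. Letting $\tilde T_j := \varphi_{j\#} T_j$ and $\tilde T_\infty := \varphi_{\infty\#} T_\infty$, flat convergence provides a decomposition $\tilde T_j - \tilde T_\infty = U_j + \partial V_j$ with $\mass(U_j) + \mass(V_j) \to 0$. The decisive trick is to slice against the \emph{moving} distance function $\rho_j(z) := d_Z(z, z_j)$, for which $\rho_j^{-1}(-\infty, r) = B(z_j, r)$ is already the ball one wants to cut out; this keeps $V_j$ (whose mass is small) inside the slicing integral and avoids any appeal to bounds on $\mass(T_j)$, which need not be uniform.

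Restricting the flat decomposition to $B(z_j, r)$ and applying \thmref{theorem-slicing} to $V_j$ with $f = -\rho_j$ at level $s = -r$ yields
\[
\tilde T_j \rstr B(z_j, r) - \tilde T_\infty \rstr B(z_j, r) = \bigl(U_j \rstr B(z_j, r) + \langle V_j, -\rho_j, -r\rangle\bigr) + \partial\bigl(V_j \rstr B(z_j, r)\bigr),
\]
hence $d_F^Z(\tilde T_j \rstr B(z_j, r), \tilde T_\infty \rstr B(z_j, r)) \le \mass(U_j) + \mass(\langle V_j, -\rho_j, -r\rangle) + \mass(V_j)$. The outer terms tend to $0$, and integrating the slice mass gives $\int_0^\infty \mass(\langle V_j, -\rho_j, -r\rangle)\, dr \le \mass(V_j) \to 0$, so after passing to a subsequence the middle term vanishes for almost every $r$. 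To replace the right-hand side by $\tilde T_\infty \rstr B(z_\infty, r) = \varphi_{\infty\#}(T_\infty \rstr B(p_\infty, r))$, observe that $|\rho_j - \rho_\infty| \le \epsilon_j := d_Z(z_j, z_\infty)$ gives $B(z_\infty, r - \epsilon_j) \subset B(z_j, r) \subset B(z_\infty, r + \epsilon_j)$, so the mass of the difference is bounded by $\|\tilde T_\infty\|(B(z_\infty, r + \epsilon_j) \setminus B(z_\infty, r - \epsilon_j))$; since $\|\tilde T_\infty\|$ is a finite Borel measure, it has no atom on the sphere $\rho_\infty^{-1}(r)$ for almost every $r$, and this annulus mass tends to $0$ with $\epsilon_j$. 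Combining the two estimates produces $d_{\mathcal{F}}(S(p_j, r), S(p_\infty, r)) \to 0$ for almost every $r$.

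For the Cauchy-with-no-limit case, $\varphi_\infty(\bar X_\infty)$ is closed in $Z$ (it is the image of a complete space under an isometry), so $\delta := d_Z(z_\infty, \varphi_\infty(\bar X_\infty)) > 0$; for $r < \delta$ and $j$ large enough that $\epsilon_j < \delta - r$, the ball $B(z_j, r)$ is disjoint from $\varphi_\infty(\bar X_\infty)$, forcing $\tilde T_\infty \rstr B(z_j, r) = 0$, and the identical slicing argument now yields $S(p_j, r) \Fto \mathbf 0$. For $M_j \Fto \mathbf 0$, \thmref{convergeto0} supplies a common $Z$ with $\tilde T_j = U_j + \partial V_j$ and $\mass(U_j) + \mass(V_j) \to 0$; for any sequence $p_j$, setting $\rho_j := d_Z(\cdot, \varphi_j(p_j))$ and running the same slicing identity with $\tilde T_\infty$ replaced by zero gives $d_F^Z(\tilde T_j \rstr B(z_j, r), 0) \to 0$ for almost every $r$. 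Arranging $\mass(V_j)$ summable along a fixed subsequence of $M_j$, Tonelli's theorem allows this almost-everywhere conclusion to be taken uniformly in $p_j$ without a further subsequence.

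The main technical chore, rather than a conceptual obstacle, is coordinating the several almost-every-$r$ sets and subsequences: one intersects the full-measure sets coming from \lemref{lem-ball} applied to $p_j$ for each of the countably many $j \in \{1, 2, \dots, \infty\}$ (to ensure $S(p_j, r)$ is actually an integral current space), from the $L^1$-to-almost-everywhere subsequence extraction applied to $r \mapsto \mass(\langle V_j, -\rho_j, -r\rangle)$, and from the null set of radii at which $\|\tilde T_\infty\|$ charges a sphere about $z_\infty$. Since all these exceptional sets are Lebesgue-null, the conclusions hold simultaneously on a full-measure set of radii.
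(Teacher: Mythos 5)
Your argument follows the paper's proof essentially step for step: embed into a common $Z$ via Theorems~\ref{converge}/\ref{convergeto0}, write the flat decomposition, slice with the moving distance function $\rho_j$, pass from $L^1$-smallness of $r\mapsto\mass\langle V_j,-\rho_j,-r\rangle$ to a.e.\ pointwise convergence along a subsequence, then exchange $B(z_j,r)$ for $B(z_\infty,r)$ by observing the finite measure $\|\tilde T_\infty\|$ can charge only countably many spheres about $z_\infty$; the treatment of the disappearing case and the $M_j\Fto\mathbf{0}$ case likewise matches. The only cosmetic addition is the Tonelli/summability remark at the end, which fixes a single subsequence across the third case but (as in the paper) does not make the exceptional null set of radii independent of the choice of $p_j$.
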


In Example~\ref{ex-balls-converge} demonstrates
why it is necessary
to choose a subsequence.   Observe that this lemma does not require a
uniform upper bound on volume and boundary volume.

\begin{rmrk}\label{rmrk-balls-converge}
The first part
of this lemma was stated as a lemma and
applied by the author and Stefan Wenger to
prove the intrinsic flat and Gromov-Hausdorff limits of noncollapsing
sequences of Riemannian manifolds with nonnegative Ricci curvature
agree in \cite{SorWen2}.  A reference to a proof of a related lemma by Ambrosio-Kirchheim \cite{AK}
was provided there.  As the lemma in \cite{AK} did allow for
changing basepoints $p_j\neq p_\infty$, it was not completely clear
to everyone how one should prove this lemma.   So it is essential
to provide full details here.
\end{rmrk}

Lemma~\ref{balls-converge} is now proven:

\begin{proof}
By Theorem~\ref{converge} and~\ref{convergeto0}
there exists a common complete metric space,
$Z$, and isometric embeddings, $\varphi_j: X_j \to Z$ and $\varphi_\infty:X_\infty \to Z$, such that
\be \label{A-j-B-j-1} 
\varphi_{j\#}T_j-T=\partial B_j + A_j
\ee
where $A_j\in \intcurr_m(Z)$ and $B_j\in \intcurr_{m+1}(Z)$ 
with
\be
\mass(A_j)+\mass(B_j) \to 0
\ee
and where
\be
T=\varphi_{\infty\#}T_\infty\in \intcurr_m(Z) 
\textrm{ when } M_\infty \neq {\bf{0}} \textrm{ and } T=0
\textrm{ when } M_\infty={\bf{0}}.
\ee

Since $p_j$ are Cauchy,  
\be
z_j=\varphi_j(p_j) \to z_\infty\in Z.
\ee
When $p_j\to p_\infty$ then $z_\infty=\varphi_\infty(p_\infty)$.  Then
for almost every $r$
\be
(\varphi_{j\#}T_j)\rstr B(z_j, r)= \varphi_{j\#}S(p_j,r).
\ee
and
\be
T\rstr B(z_\infty, r)= \varphi_{\infty\#}S(p_\infty,r).
\ee
If $p_j$ has no limit in $\bar{X}_\infty$, then 
$z_\infty\notin \varphi_\infty(\bar{X}_\infty)$ and so there exists $\delta>0$
such that for all $r<\delta$,
\be
B(z_\infty, r) \cap \varphi_\infty(\bar{X}_\infty)=0.
\ee
So
\be
T \rstr B(z_\infty, r) =0.
\ee
If $M_j \Fto 0$, then one has this as well without requiring $r<\delta$.

So to prove the theorem in all cases one need only show that for almost every
$r$ one can find a subsequence of the $M_j$ also denoted $M_j$
such that $S(p_j, r)$ are integral
current spaces and 
\be\label{only-need-this}
d_F^Z\left((\varphi_{j\#}T_j)\rstr \rho_j^{-1}(-\infty,r), T \rstr \rho_\infty^{-1}(-\infty,r)\right) \to 0
\ee
where $\rho_j(z)=d_Z(z_j,z)$.

By Lemma~\ref{lem-ball} for almost every $r$ these are
integral current spaces.

Observe that by (\ref{A-j-B-j-1}), for almost every $r$: 
\begin{eqnarray}
\,\,\,\,\,\,(\varphi_{j\#}T_j) \rstr \rho_j^{-1}(-\infty,r)& - &T\rstr \rho_j^{-1}(-\infty,r)\,=\\
&&=\,\,\,(\partial B_j) \rstr \rho_j^{-1}(-\infty,r)\,+ \,\,A_j\rstr  \rho_j^{-1}(-\infty,r)\\
&&= \,<B_j, -\rho_j, -r> + \,\,\partial \left(B_j \rstr \rho_j^{-1}(-\infty,r)\right)\\
&&\qquad \qquad +\,\,\,\,A_j\rstr  \rho_j^{-1}(-\infty,r).
\end{eqnarray}
Thus $
d_F^Z\left(\varphi_{j\#}T_j \rstr \rho_j^{-1}(-\infty,r) ,T\rstr \rho_j^{-1}(-\infty,r)\right)\le$
\begin{eqnarray}
&\le& f_j(r) + \mass(B_j \rstr \rho_j^{-1}(-\infty,r))
+ \mass(A_j\rstr  \rho_j^{-1}(-\infty,r)) \\
&\le& f_j(r) + \mass(B_j) + \mass(A_j)
\end{eqnarray}
where
\be
f_j(r)=\mass(<B_j, -\rho_j, -r>).
\ee  
By the Ambrosio-Kirchheim Slicing Theorem 
\begin{eqnarray}
\int_{-\infty}^{\infty} f_j(r) \, dr &=&\int_{-\infty}^{\infty} \mass(<B_j, \rho_j, r>) \, dr \\
&=& \mass(B_j\rstr d\rho_j) \le \Lip(\rho_j) \mass(B_j)\le \mass(B_j) \to 0.
\end{eqnarray}
Since $f_j$ converge in $L^1$ to $0$, there exists a subsequence, also denoted $f_j$, such that
for almost every $r>0$, $f_j(r)$ converge to $0$ pointwise (c.f. \cite{Rudin-R&C} Theorem 3.12).

Thus there is a subsequence such that for almost every $r>0$
\be\label{ok-here}
\lim_{j\to \infty}
d_F^Z\left(\varphi_{j\#}T_j \rstr \rho_j^{-1}(-\infty,r) ,T\rstr \rho_j^{-1}(-\infty,r)\right)=0.
\ee
Next observe that the set
\be
K=\left(\rho_j^{-1}(-\infty,r)\setminus \rho_\infty^{-1}(-\infty,r)\right)
\cup 
\left(\rho_j^{-1}(-\infty,r)\setminus \rho_\infty^{-1}(-\infty,r)\right)
\ee
satisfies
\be
K \subset \rho_\infty^{-1}(r-\delta_j, r+\delta_j) 
\ee
where 
\be
\delta_j=d_Z(z_j, z_\infty).
\ee
Then 
\begin{eqnarray*}
d_F^Z\left(T\rstr \rho_j^{-1}(-\infty,r), T\rstr \rho_\infty^{-1}(-\infty,r)\right)
&\le & \mass \left(T\rstr \rho_j^{-1}(-\infty,r)\,-\, T\rstr \rho_\infty^{-1}(-\infty,r)\right)\\
&\le & \mass( T \rstr K) \\
&\le & ||T||\left(\rho_\infty^{-1}(r-\delta_j, r+\delta_j) \right)
 \end{eqnarray*}
Since $\lim_{j\to\infty} \delta_j=0$, one has
\begin{eqnarray} 
 \lim_{j\to\infty} ||T||\left(\rho_\infty^{-1}(r-\delta_j, r+\delta_j) \right)
& =&
 \lim_{j\to\infty} ||f_\#T||\left(\rho_\infty^{-1}(r-\delta_j, r+\delta_j) \right)\\
 &=&||f_\#T||\{r\}
 \end{eqnarray}
Since $||f_\#T||$ is a finite measure on $\mathbb{R}$,
$||f_\#T||\{r\}=0$ except on a countable set of values of $r$.
Thus, for almost every $r$,
 \be
 \lim_{j\to \infty}d_F^Z(T\rstr \rho_j^{-1}(-\infty,r), T\rstr \rho_\infty^{-1}(-\infty,r)) =0.
 \ee
 Combining this with (\ref{ok-here}) one has (\ref{only-need-this}) and the
 proof is complete.
\end{proof}

\begin{example}\label{ex-balls-converge}
There exists a sequence of Riemannian manifolds $M_j$
diffeomorphic to a torus with $vol(M_j)\le V_0$ such that
$M_j \Fto 0$ but there exists a Cauchy sequence $p_j\in M_j$ such that
$S(p_j,r)$ does not have an intrinsic flat limit for any $r\in(0,\pi)$. 
\end{example}

\begin{proof}
Take the metric
\be
g_j= dr^2 + f_j^2(r) d\theta^2\qquad r\in[0, \pi]
\ee
with $f_j(0)=0$, $f_j(\pi)=0$, $f_j'(0)=1$, $f_j'(\pi)=-1$
so that $M_j$ is a smooth Riemannian manifold.
Choose $f_j>0$ smooth on $(0,\pi)$ such that
\be
\int_0^{\pi} f^2_j(r) \, dr \to 0
\ee
and such that 
\be
f_j(r)>1\textrm{ for } r\in [j \textrm{ mod }\pi, j+1/j \textrm{ mod }\pi]\cap (1/j^2, \pi-1/j^2)
\ee
and
\be
f_j(r)<1/j\textrm{ for } r\in [j+2/j \textrm{ mod }\pi, j+3/j \textrm{ mod } \pi]\cap (1/j^2, \pi-1/j^2)
\ee
and $f_j$ smoothly decreasing in between.
Since
\be
\vol(M_j)=4\pi\int_0^{2\pi} f^2_j(r) \, dr \to 0
\ee
one has $M_j \Fto \bf{0}$.   Take $p_j$ to be the point where $r=0$.
Suppose one has $r'$ such that the balls converge to the
zero integral current space, $S(p_j,r')\Fto {\bf{0}}$, then
the spheres also converge to the zero space, $\partial S(p_j,r')\Fto {\bf{0}}$.

However there exists a subsequence $j'\to \infty$ such that
$r\in [j'\textrm{ mod }\pi, j+1/j' \textrm{ mod }\pi]$.   On this set $S(p_{j'},r)$ is
bi-Lipschitz close to a circle $S^1$ endowed with the restricted
metric from the disk.   So 
\be
\partial S(p_{j'},r) \Fto \left(S^1, d_{D^2}, \int_{S^1}\right).
\ee
\end{proof}

Also useful for some applications is the following lemma:

\begin{lem} \label{rescaling}
Let $M_j=(X_j, d_j, T_j)$ and let $R>0$.  Then one
has rescaled integral current spaces, $M'_j=(X_j, d_j/R, T_j)$,
one of which may possibly be $\bf{0}$,
and
\be
d_{\mathcal{F}}(M_1, M_2) \le d_{\mathcal{F}}(M'_1, M'_2)R^m (1+ R).  
\ee
In particular taking almost any $r=R\in (0,\delta)$ and $p_j \in X_j$ 
one can rescale
\be
S(p_j,r)=\left({\set}(T_j\rstr B(p_j,r)),d_j,T_j\rstr B\left(p_j,r\right)\right)
\ee
by $r$ to obtain
\be
S'(p_j,1)=\left({\set}(T_j\rstr B(p_j,1)),d_j/R,T_j\rstr B\left(p_j,r\right)\right)
\ee
and
\be
d_{\mathcal{F}}(S(p_1,r), S(p_2,r)) \le 
d_{\mathcal{F}}(S'(p_1,1), S'(p_2,1)) r^m (1+\delta). 
\ee
\end{lem}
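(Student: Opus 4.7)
The plan is to start from a near-optimal flat realization of $d_{\Fm}(M'_1, M'_2)$ and transfer it to a flat realization for $(M_1, M_2)$ by rescaling the ambient metric. Before doing so, one must check that $M'_j = (X_j, d_j/R, T_j)$ really is an integral current space: the set $X_j$ and the collection of Lipschitz functions on $\bar{X}_j$ are unchanged by replacing $d_j$ by $d_j/R$, and by multilinearity $T_j$ remains a local, continuous multilinear functional with finite mass measure (the mass measure simply rescales by $R^{-m}$, and integer rectifiability is preserved under bi-Lipschitz changes of metric). If $M_j$ already equals $\mathbf{0}$ then so does $M'_j$ and the asserted inequality is trivial, so one may assume both sides are nonzero.

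Given $\epsilon > 0$, choose a complete metric space $(Z', d')$, isometric embeddings $\varphi'_j : (\bar{X}_j, d_j/R) \to (Z', d')$, and currents $U' \in \intcurr_m(Z')$, $V' \in \intcurr_{m+1}(Z')$ with
\begin{equation*}
\varphi'_{1\#}T_1 - \varphi'_{2\#}T_2 = U' + \partial V', \qquad \mass(U') + \mass(V') < d_{\Fm}(M'_1, M'_2) + \epsilon.
\end{equation*}
Let $Z$ denote the set $Z'$ equipped with the metric $d_Z := R \cdot d'$; then $Z$ is still complete, and the same maps, viewed now as $\varphi_j : (\bar{X}_j, d_j) \to Z$, are isometric embeddings since $d_Z(\varphi'_j(x), \varphi'_j(y)) = R \cdot d'(\varphi'_j(x), \varphi'_j(y)) = d_j(x,y)$. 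The crux is the scaling identity: letting $\iota : Z' \to Z$ denote the set-theoretic identity, $\Lip(\iota) = R$ and $\Lip(\iota^{-1}) = 1/R$, so applying (\ref{mass-push}) in both directions gives the exact equality $\mass_Z(\iota_\# S) = R^k \mass_{Z'}(S)$ for every $k$-dimensional integral current $S$ on $Z'$. Pushing the flat decomposition forward by $\iota$ (and using $\partial \iota_\# = \iota_\# \partial$) yields
\begin{equation*}
\varphi_{1\#}T_1 - \varphi_{2\#}T_2 = \iota_\# U' + \partial(\iota_\# V') \textrm{ in } Z,
\end{equation*}
whose summed flat mass is $R^m \mass(U') + R^{m+1} \mass(V') \le R^m(1+R)\bigl(\mass(U') + \mass(V')\bigr)$ (since $a + Rb \le (1+R)(a+b)$ for $a,b \ge 0$). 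Letting $\epsilon \to 0$ produces $d_{\Fm}(M_1, M_2) \le R^m(1+R)\, d_{\Fm}(M'_1, M'_2)$.

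For the ball statement one specializes to $R = r \in (0,\delta)$ applied to $M_j = S(p_j, r)$ and $M'_j = S'(p_j, 1)$; Lemma~\ref{lem-ball} guarantees these are integral current spaces for almost every $r$, and bounding $(1+r) \le (1+\delta)$ gives the stated inequality. The only genuinely delicate step is the mass-scaling identity $\mass_Z(\iota_\# S) = R^k \mass_{Z'}(S)$, obtained by combining (\ref{mass-push}) for $\iota$ (giving the $\le$ direction) with the same inequality for $\iota^{-1}$ (giving the $\ge$ direction); everything else amounts to direct substitution into the definition of $d_{\Fm}$.
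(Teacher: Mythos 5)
Your proof is correct and follows essentially the same strategy as the paper's: rescale the ambient metric by $R$, observe that the same set-theoretic maps remain isometric embeddings for $(\bar{X}_j, d_j)$, and then use the exact scaling of $k$-dimensional mass by $R^k$ to transfer a flat decomposition for $(M'_1, M'_2)$ into one for $(M_1, M_2)$, bounding $R^m a + R^{m+1}b \le R^m(1+R)(a+b)$.

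The one genuine difference is that the paper invokes Theorem~\ref{achieved} (the infimum in the intrinsic flat distance is attained) to get an exactly optimal pair $(A,B)$, while you use a near-optimal realization and let $\epsilon \to 0$. This is a small but real gain: Theorem~\ref{achieved} is stated only for precompact integral current spaces, whereas your argument needs nothing beyond the definition of $d_{\Fm}$ as an infimum, so it applies to the lemma as literally stated without a hidden precompactness assumption. You are also more careful than the paper in justifying the exact mass identity $\mass_Z(\iota_\# S) = R^k \mass_{Z'}(S)$ via two applications of the push-forward mass bound (\ref{mass-push}), and in verifying that $M'_j$ is still an integral current space. Both proofs are correct; yours is marginally cleaner on the technical hypotheses.
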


\begin{proof}
By the Theorem~\ref{achieved}, there exists isometric embeddings
 $\varphi_j: X_j \to Z$ 
\be
d_{Z}(\varphi_j(x), \varphi_j(y))/R = d_j(x,y)/R \qquad \forall x,y \in X_j
\ee
and $A\in \intcurr_m(Z)$, $B\in \intcurr_{m+1}(Z)$
such that
\be
\varphi_{1\#}T_1 -\varphi_{2\#}T_2 = A +\partial B
\ee
and 
\be
d_{\mathcal{F}}(M'_1, M'_2)=\mass(A) +\mass(B)
\ee
where these masses are defined using $d_Z/R$.
Then $\varphi_j: X_j \to Z$ 
\be
d_{Z}(\varphi_j(x), \varphi_j(y)) = d_j(x,y) \qquad \forall x,y \in X_j
\ee 
and so by definition of intrinsic flat distance
\be
d_{\mathcal{F}}(M_1, M_2)\le \mass'(A) +\mass'(B)
\ee
where these masses are defined using $d_Z$.  Thus
\begin{eqnarray}
d_{\mathcal{F}}(M_1, M_2) &\le&  \mass(A)R^m +\mass(B)R^{m+1}\\
&\le& ( \mass(A) +\mass(B)) R^{m}(1+R)\\
&\le& d_{\mathcal{F}}(M'_1, M'_2)R^m (1+ R).  
\end{eqnarray}
It is easy to see this argument also works when $M_2={\bf{0}}$ taking
$\varphi_{2\#}T_2=0$.   
\end{proof}

\vspace{.4cm}
\section{Flat convergence to Gromov-Hausdorff Convergence}\label{sect-flat-to-GH} \label{sect-flat-to-GH}

In this subsection, Theorem~\ref{flat-to-GH} is proven:

\begin{thm} \label{flat-to-GH}\footnote{This theorem and its proof originally appeared an early preprint 
version of \cite{Sormani-properties} but has now been moved to this paper with minor corrections.  It will not appear in any publication of \cite{Sormani-properties}.}
If a sequence of precompact integral current spaces, $M_i=\left(X_i,d_i,T_i\right)\in \mathcal{M}_0^m$, 
converges to 
a nonzero precompact integral current space, $M=\left(X,d,T\right)\in \mathcal{M}_0^m$, in the intrinsic flat sense, 
then there exists $S_i \in \intcurr_m\left(\bar{X}_i\right)$ such that
$N_i=\left(\set \left(S_i\right), d_i\right)$ converges to $\left(\bar{X},d\right)$ in the Gromov-Hausdorff
sense
\be\label{flat-to-GH-0}
d_{GH}(N_i,M) \to 0
\ee
 and
\be \label{flat-to-GH-1}
\liminf_{i\to\infty}\mass(S_i) \ge \mass(M).
\ee
When the $M_i$ are Riemannian manifolds, the $N_i$ can be taken to be
settled completions of open submanifolds of $M_i$.
\end{thm}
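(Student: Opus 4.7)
The plan is to isometrically embed $M_i$ and $M$ into a common complete metric space $Z$ and to cut out of each $T_i$ an integral subcurrent supported in a shrinking tubular neighborhood of $\varphi(\bar X)$ in $Z$. By Theorem~\ref{converge}, choose isometric embeddings $\varphi_i : \bar X_i \to Z$ and $\varphi : \bar X \to Z$ into a complete separable $Z$ such that $\varphi_{i\#}T_i$ converges both weakly and flatly to $\varphi_{\#} T$. Since $M$ is precompact, $\varphi(\bar X) \subset Z$ is compact. Let $g(z) := d_Z(z,\varphi(\bar X))$, a $1$-Lipschitz function, and $V_r := g^{-1}[0,r)$. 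Applying the Ambrosio--Kirchheim Slicing Theorem (Theorem~\ref{theorem-slicing}) to $g \circ \varphi_i$ and $T_i$, the restriction $T_i \rstr (g\circ\varphi_i)^{-1}[0,r)$ is an integral current for almost every $r > 0$, and the isometric-embedding mass identity gives $||\varphi_{i\#}T_i||(V_r) = ||T_i||\bigl((g\circ\varphi_i)^{-1}[0,r)\bigr)$.

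A single diagonal choice of radii $r_i \to 0$ will control both the mass inequality and the Gromov--Hausdorff convergence. For each $n \in \N$ choose a finite set $\{y_{n,1},\dots,y_{n,K_n}\} \subset X$ which $(1/n)$-covers $\bar X$, and use Lemma~\ref{to-a-limit} to select $x_{n,j,i} \in X_i$ with $\varphi_i(x_{n,j,i}) \to \varphi(y_{n,j})$ in $Z$ as $i \to \infty$. Lower semicontinuity of mass on the open set $V_{1/n}$, together with $\spt(\varphi_{\#} T) \subset \varphi(\bar X) \subset V_{1/n}$, yields
\be
\liminf_i ||\varphi_{i\#}T_i||(V_{1/n}) \,\ge\, ||\varphi_{\#} T||(V_{1/n}) \,=\, \mass(M).
\ee
Let $I(n)$ be the least integer such that, for every $i \ge I(n)$, both $\max_{j \le K_n} d_Z(\varphi_i(x_{n,j,i}),\varphi(y_{n,j})) < 1/n$ and $||\varphi_{i\#}T_i||(V_{1/n}) \ge \mass(M)-1/n$; both conditions hold eventually for each fixed $n$, so $I(n) < \infty$. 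Set $n_i := \max\{n : I(n) \le i\}$, so $n_i \to \infty$. Since the slicing-good radii form a set of full Lebesgue measure, one may pick $r_i$ from $(1/n_i, 2/n_i)$ at which the slicing theorem delivers an integral current. Define $S_i := T_i \rstr (g\circ\varphi_i)^{-1}[0,r_i) \in \intcurr_m(\bar X_i)$ and $N_i := (\set(S_i), d_i)$.

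The mass estimate follows from $V_{r_i} \supset V_{1/n_i}$ via
\be
\mass(S_i) = ||\varphi_{i\#}T_i||(V_{r_i}) \,\ge\, ||\varphi_{i\#}T_i||(V_{1/n_i}) \,\ge\, \mass(M) - 1/n_i.
\ee
For the Gromov--Hausdorff convergence, work through the common embedding $Z$: on one hand $\varphi_i(N_i) \subset \bar V_{r_i}$, so every point of $\varphi_i(N_i)$ lies within $r_i < 2/n_i$ of $\varphi(\bar X)$. On the other hand, given $y \in \bar X$, choose $y_{n_i,j}$ with $d(y,y_{n_i,j}) < 1/n_i$; then $g(\varphi_i(x_{n_i,j,i})) \le d_Z(\varphi_i(x_{n_i,j,i}),\varphi(y_{n_i,j})) < 1/n_i < r_i$, so $x_{n_i,j,i} \in (g\circ\varphi_i)^{-1}[0,r_i)$, and in fact $x_{n_i,j,i} \in \set(S_i) = N_i$, because $x_{n_i,j,i} \in \set(T_i)$ has positive lower $||T_i||$-density and sufficiently small balls around it eventually lie in the open set defining $S_i$, so it has positive $||S_i||$-density too. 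Moreover $\varphi_i(x_{n_i,j,i})$ lies within $2/n_i$ of $\varphi(y)$. Hence the Hausdorff distance in $Z$ between $\varphi_i(N_i)$ and $\varphi(\bar X)$ is at most $2/n_i \to 0$, yielding $d_{GH}(N_i, M) \to 0$.

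When $M_i$ is a Riemannian manifold, $(g\circ\varphi_i)^{-1}[0,r_i) \subset M_i$ is open (the preimage of an open set under a continuous function), hence an open submanifold, and $\set(S_i)$ is its settled completion, settling the final assertion. The main obstacle is coordinating the single schedule $r_i$ to be simultaneously (a) slicing-good so that $S_i$ is an integral current, (b) larger than the approximation errors of the $(1/n_i)$-net so that net images actually land inside $N_i$ and yield the reverse Hausdorff bound, and (c) small enough that $N_i$ lies in an asymptotically thin tube around $\varphi(\bar X)$; this is handled by the diagonal index $n_i$, which bundles the GH-density requirement and the lower-semicontinuity mass requirement into one choice.
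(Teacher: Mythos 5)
Your proof follows essentially the same route as the paper's: embed into a common $Z$ via Theorem~\ref{converge}, slice with the distance function from $\varphi(\bar X)$ using Theorem~\ref{theorem-slicing}, extract the mass bound from lower semicontinuity of mass on the open tube, obtain the reverse Hausdorff inclusion via a finite net and approximating points in $\set(S_i)$ (the paper produces these in-line from lower semicontinuity on balls $\rho_x^{-1}[0,\epsilon)$, which is the content of Lemma~\ref{to-a-limit}), and close with a diagonal choice of shrinking radii. One small repair: as written $n_i := \max\{n : I(n)\le i\}$ need not be finite nor defined for small $i$ (e.g., for a constant sequence $M_i=M$ one could have $I(n)\equiv 1$); replacing $I(n)$ by $\tilde I(n):=\max\{I(1),\dots,I(n),\,n\}$ before taking the max restores the argument, matching the paper's staircase indexing by $\epsilon_k$ and $N_k$.
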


\begin{rmrk}\label{flat-to-GH-r}
If in addition it is assumed that $\lim_{i\to \infty}\mass(M_i)=\mass(M)$, then
by (\ref{flat-to-GH-1}),  
\be
\lim_{i\to \infty} \mass(\set(T_i-S_i), d_i, T_i-S_i)=0.
\ee
In the Riemannian setting, 
\be
\lim_{i\to \infty} \vol(M_i\setminus N_i)=0.
\ee
\end{rmrk}

\begin{rmrk}
In Ilmanen's example \cite{SorWen2} of a sphere with
increasingly many splines, the $S_i$ may 
be chosen to be integration over the spherical
part of $M_i$ with balls around the tips removed.  Then $\set(S_i)$ are manifolds with
boundary converging to the sphere in the Gromov-Hausdorff and intrinsic flat sense.
\end{rmrk}

\begin{rmrk}
The precompactness of the limit integral current spaces is
necessary in this theorem 
because a noncompact limit space can never be the Gromov-Hausdorff
limit of precompact spaces.  In fact there are sequences of
compact Riemannian manifolds, $M_j$, whose intrinsic flat limit
is an unbounded complete Riemannian manifold of finite volume
\cite{SorWen2}[Ex A.10] and another example of
such spaces whose Intrinsic Flat limit is a bounded noncompact
integral current space \cite{SorWen2}[Ex A.11].
\end{rmrk}

\begin{rmrk} \label{rmrk-conv-point}
Gromov's Compactness Theorem combined with Theorem~\ref{flat-to-GH} implies that
that any sequence of $x_i\in N_i\subset M_i$
has a subsequence converging to a point $x$ in the metric completion of $M$.
Other points need not have limit points, as can be seen when the tips of
thin splines disappear in the examples from \cite{SorWen2}.
A more general Bolzano-Weierstrass Theorem precisely
identifying those points which do not disappear is proven
later in this section.
\end{rmrk}

Theorem~\ref{flat-to-GH} is now proven:

\begin{proof}

By Theorem~\ref{converge} there exists a common metric space
$Z$ and isometric embeddings $\varphi_i: X_i \to Z$ and $\varphi:X \to Z$ such that
\be \label{flat-to-GH-1a}
\varphi_{\#}T-\varphi_{i\#}T_i=U_i +\partial V_i
\ee
where $m_i=\mass\left(U_i\right)+\mass\left(V_i\right)\to 0$.
So $\varphi_{i\#}T_i$ converges in the flat and thus the weak sense to $\varphi_\# T$.   

Since $M\in \mathcal{M}^m_0$, $\varphi\left(X\right)$ is precompact. Let
$\rho: Z \to \R$ be the distance function from $\varphi\left(X\right)$.

By the Ambrosio-Kirchheim Slicing Theorem [Theorem~\ref{theorem-slicing}] applied to $f(s)=-\rho(s)$, one has
\be \label{sie-1}
S_{i,\epsilon}:=\varphi_{i\#}T_i \rstr \rho^{-1}\left([0,\epsilon)\right) \in \intcurr_m\left(Z\right)
\ee
for almost every $\epsilon>0$.  Fix any such $\epsilon$.

Before choosing the $S_i$ mentioned in the statement of the theorem, one may examine the mass of $S_{i,\epsilon}$
and the Hausdorff distance between $\set(S_{i\epsilon})$
and $\varphi(X)$. 
Note that $\varphi_{\#}T = \varphi_\#T \rstr \rho^{-1}[0,\epsilon)$.
So 
\be\label{portion}
||T||(\rho^{-1}[0,\epsilon))=\mass(T).
\ee
By lower semicontinuity of mass one has
\be
\liminf_{i\to\infty}
||\varphi_{i\#}T_i||(\rho^{-1}[0,\epsilon))\ge 
||\varphi_{\#}T||(\rho^{-1}[0,\epsilon)). 
\ee
Combining this with (\ref{sie-1}) and (\ref{portion}) 
and the definition of liminf one has:
\be\label{sie-lower-mass}
\textrm{for almost every } \epsilon>0\,\,\exists N'_\epsilon\in\N \textrm{ such that }
\mass(S_{i\epsilon})\ge \mass(T)-\epsilon \,\,\,\forall i\ge N'_\epsilon.
\ee

To see that the Hausdorff distance between $S_{i,\epsilon}$ and $\varphi(X)$ is small, $d_H^Z(S_{i,\epsilon}, \varphi(X))<2\epsilon$, first immediately observe that
\be \label{eqn-sie-1}
\set(S_{i,\epsilon}) \subset \bar{T}_\epsilon(\varphi(X))
\subset T_{2\epsilon}(\varphi(X)).
\ee
One needs only show
\be \label{eqn-sie-back-2}
\varphi\left(X\right) \subset T_{2\epsilon} \left(\set \left(S_{i,\epsilon}\right) \right) \qquad \forall i \ge N_\epsilon.
\ee
To prove (\ref{eqn-sie-back-2}), 
first note that for any $x\in X$, one can let $\rho_x$ be the distance
function from $\varphi\left(x\right)$.  By 
the lower semicontinuity of mass of open sets one has,
\be
\liminf_{i\to\infty} ||\varphi_{i\#}T_i||(\rho_x^{-1}[0,\epsilon))
\ge ||\varphi_{\#}T||(\rho_x^{-1}[0,\epsilon))>0
\,\,\,\forall \epsilon>0.
\ee
Thus one has
\be 
\textrm{ for almost every } \epsilon>0 \,\,\exists N_{\epsilon,x}
\ge N'_\epsilon 
\,\,s.t.\,\, \varphi_{i\#}T_i \rstr \rho_x^{-1}[0,\epsilon)\neq 0 \,\,\,\forall i \ge N_{\epsilon,x}.
\ee
Recall $N'_\epsilon$ was defined in (\ref{sie-lower-mass}). 
Combining this with (\ref{sie-1}), and the fact that
\be
\rho_x^{-1}[0,\epsilon)=B(x,\epsilon)\subset \rho^{-1}[0,\epsilon)
=T_\epsilon(\varphi(X))
\ee
one has
\be \label{eqn-sie-here-compact}
\forall x\in X\,\,\,
\textrm{ for almost every } \epsilon>0 \,\,\exists N_{\epsilon,x}
\ge N'_\epsilon \textrm{ and } s_{i,\epsilon,x}\in \set(S_i)\cap B(\varphi(x),r).
\ee
By the precompactness of $X$, there is
a finite $\epsilon$ net, $X_\epsilon=\{x_1,...x_N\}$ on $\varphi\left(X\right)$ 
(i.e. the union of $B(x_i,\epsilon)$ contains $X_\epsilon$).
Define
\be
N_\epsilon=\max\left\{N_{\epsilon, x_j}: x_j\in X_\epsilon\right\} \ge N'_\epsilon
\ee
then 
\be
\forall x\in X\,\exists x_j\in X_\epsilon \,\,s.t.\,\,
\forall i\ge N_\epsilon\,\,
\exists
s_x:=s_{i,\epsilon,x_j} \in \set \left(S_{i,\epsilon}\right)
\textrm{ s.t. } d_Z\left(s_{x},\varphi (x)\right) <2\epsilon.
 \ee
So (\ref{eqn-sie-back-2}) has been proven.

Combining (\ref{eqn-sie-back-2}) with (\ref{eqn-sie-1}), the Hausdorff distance
satisfies
\be \label{eqn-sie-3}
d^Z_H\left(\set \left(S_{i,\epsilon}\right), \varphi\left(X\right)\right) \le 2\epsilon \qquad \forall i\ge N_\epsilon.
\ee

Recall the definition of $S_i$ as in the statement of the theorem.
One must prove (\ref{flat-to-GH-0}) and (\ref{flat-to-GH-1}).

Let $\epsilon_k\to 0$ be a decreasing sequence of $\epsilon$ for which
all these currents are defined. Let $N_k:=N_{\epsilon_k}$. Let
\be
S_i = T_i \in \intcurr_m\left(X_i\right) 
\textrm{ for } i=1 \textrm{ to } N_1
\ee
\be
S_i= \varphi^{-1}_{i\#}S_{i, \epsilon_1} \in \intcurr_m\left(X_i\right) 
 \textrm{ for } i=N_1+1 \textrm{ to } N_2
\ee
and so on:
\be \label{F-to-GH-S_i}
S_i= \varphi^{-1}_{i\#}S_{i, \epsilon_j} \in \intcurr_m\left(X_i\right) 
\textrm{ for } i=N_j+1 \textrm{ to } N_{j+1}
\ee
Then by  (\ref{eqn-sie-3}), 
\be
d^Z_{H}\left(\set \left(S_{i}\right), \varphi\left(X\right)\right) \le 2\epsilon_i.
\ee
This implies (\ref{flat-to-GH-0}).

By (\ref{sie-lower-mass}) and $N_{k}=N_{\epsilon_k}\ge N'_{\epsilon_k}$ one has,
one has
\be
  \mass(S_{i})\ge \mass(T)-\epsilon_i
  \ee
which gives us (\ref{flat-to-GH-1}) and completes the proof of the theorem.
\end{proof}

\vspace{.2cm}

\begin{rmrk}
One could construct a common metric space $Z$ for
Examples A.10 and A.11 of \cite{SorWen2}
and find $S_{i,\epsilon}$ as in the above proof satisfying 
(\ref{eqn-sie-1}).  However, in that example, (\ref{eqn-sie-back-2}) will fail to hold.
This is where the precompactness of the limit space is essential in the proof.
\end{rmrk}

\vspace{.2cm}

\begin{rmrk} \label{rmrk-region-length}
Examples in \cite{SorWen2} demonstrate 
that the metric space of a current space need not be a length
space.   In general, when a sequence of Riemannian manifolds converges in the intrinsic flat sense to an integral current space
it need not be a geodesic length space.  
If the $\set (S_i)$ are length spaces or approximately length spaces, then the limit current space is in fact a length space.  
This occurs for example in Ilmanen's example of \cite{SorWen2}.
It also occurs whenever the Gromov-Hausdorff limits and flat limits of length spaces agree.
It might be interesting to develop a notion of an approximate length space that suffices to
give a geodesic limit space.  What properties must hold on $M_i$ to guarantee that their
limit is a geodesic length space?
\end{rmrk}

\vspace{.2cm}

\begin{rmrk}
It is not immediately clear whether the integral current spaces, $N_i$,
constructed in the proof of Theorem~\ref{flat-to-GH} actually converge
in the intrinsic flat sense to $M$.  One expects an extra assumption
on total mass would be needed to interchange between flat and weak
convergence, but even so it is not completely clear.  One would need
to uniformly control the masses of $\partial N_i$ using a common upper
bound on $\mass(N)$ which can be done using theorems in 
Section 5 of \cite{AK}, but is highly technical.  It is only worth investigating
if one has an application in mind.
\end{rmrk}

\section{Arzela-Ascoli Theorem for Lipschitz Functions}

In this section the first Arzela-Ascoli Theorem is proven.
This basic theorem is proven using only Theorem~\ref{converge}
and Lemma~\ref{to-a-limit}.

\begin{thm}\label{Flat-Arz-Asc}
\footnote{This theorem originally appeared with a 
fundamentally different more difficult proof involving
Gromov filling volumes in an early preprint 
version of \cite{Sormani-properties}.   It will not appear 
in any publication of \cite{Sormani-properties}.}
Fix $K>0$.
Suppose $M_i=(X_i, d_i, T_i)$ are integral current spaces for
$i\in \{1,2,...,\infty\}$ and
$M_i \Fto M_\infty$
and $F_i: X_i \to W$ are Lipschitz maps into
a compact metric space $W$ with 
\be
\Lip(F_i)\le K,
\ee
then a subsequence converges to a Lipschitz map
$F_\infty: X_\infty \to W$ with 
\be
\Lip(F_\infty)\le K.
\ee  More
specifically, there exists isometric embeddings 
of the subsequence, $\varphi_i: X_i \to Z$,
such that $d_F^Z(\varphi_{i\#} T_i , \varphi_{\infty\#} T_\infty)\to 0$
and for any sequence $p_i\in X_i$ converging to $p\in X_\infty$,
\be\label{pip}
d_Z(\varphi_i(p_i), \varphi_\infty(p))\to 0, 
\ee
one has converging images,
\be\label{Fip}
d_W(F_i(p_i),F_\infty(p))\to 0.
\ee
\end{thm}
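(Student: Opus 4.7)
The plan is to reduce the statement to the classical Arzela-Ascoli argument by combining three ingredients: the common isometric embedding from Theorem~\ref{converge}, the pullback maps from Lemma~\ref{to-a-limit}, and a Cantor diagonal extraction on a countable dense subset of $X_\infty$. The key point is that once everything is viewed inside a single ambient space $Z$, the uniform Lipschitz condition on $F_i$ can be played against distances in $Z$, which by the isometric embedding property are exactly the distances in $X_i$.

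First I would apply Theorem~\ref{converge} to obtain a separable complete metric space $Z$ and isometric embeddings $\varphi_i:X_i\to Z$ (for $i\in\{1,2,\dots,\infty\}$) such that $d_F^Z(\varphi_{i\#}T_i,\varphi_{\infty\#}T_\infty)\to 0$. This is the $Z$ that will witness (\ref{pip}) in the conclusion. Next, because $X_\infty$ is countably $\mathcal H^m$-rectifiable (Remark~\ref{space-param}) it is separable, so I fix a countable dense subset $\{x_k\}_{k\in\N}\subset X_\infty$ and apply (the construction in the proof of) Lemma~\ref{to-a-limit} with respect to this same $Z$ to obtain maps $G_i:X_\infty\to X_i$ with $\varphi_i(G_i(x))\to\varphi_\infty(x)$ in $Z$ and $d_i(G_i(x),G_i(y))\to d_\infty(x,y)$ for every $x,y\in X_\infty$. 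The points $G_i(x_k)$ serve as ``samples'' at which to evaluate $F_i$.

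The Arzela-Ascoli step is now routine. Each sequence $\{F_i(G_i(x_k))\}_i$ lies in the compact space $W$, so a Cantor diagonal extraction yields a subsequence (still indexed by $i$) along which $y_k:=\lim_{i\to\infty}F_i(G_i(x_k))$ exists in $W$ for every $k$. Using $\Lip(F_i)\le K$ and the distance convergence from Lemma~\ref{to-a-limit},
\begin{eqnarray*}
d_W(y_k,y_l)&=&\lim_{i\to\infty} d_W\bigl(F_i(G_i(x_k)),F_i(G_i(x_l))\bigr)\\
&\le& K\lim_{i\to\infty} d_i(G_i(x_k),G_i(x_l))=K\,d_\infty(x_k,x_l),
\end{eqnarray*}
so $x_k\mapsto y_k$ is $K$-Lipschitz on the dense set $\{x_k\}$ and extends uniquely by uniform continuity to a $K$-Lipschitz map $F_\infty$ on $\bar{X}_\infty$, whose restriction to $X_\infty$ is the desired limit.

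It remains to verify (\ref{Fip}). Given $p_i\in X_i$ with $\varphi_i(p_i)\to\varphi_\infty(p)$ for some $p\in X_\infty$ and given $\epsilon>0$, choose $x_k$ with $d_\infty(p,x_k)<\epsilon$. Since the $\varphi_i$ are isometric embeddings,
\be
d_i(p_i,G_i(x_k))=d_Z\bigl(\varphi_i(p_i),\varphi_i(G_i(x_k))\bigr)\longrightarrow d_Z\bigl(\varphi_\infty(p),\varphi_\infty(x_k)\bigr)=d_\infty(p,x_k),
\ee
and the triangle inequality gives
\begin{eqnarray*}
d_W(F_i(p_i),F_\infty(p))&\le& K\,d_i(p_i,G_i(x_k))+d_W\bigl(F_i(G_i(x_k)),F_\infty(x_k)\bigr)\\
&&{}+K\,d_\infty(x_k,p).
\end{eqnarray*}
Taking $\limsup_{i\to\infty}$ bounds this by $2K\,d_\infty(p,x_k)<2K\epsilon$; sending $\epsilon\to 0$ yields (\ref{Fip}). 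The case $M_\infty={\bf{0}}$ is vacuous. The main (mild) obstacle in the whole argument is ensuring the pullback maps $G_i$ and the Lipschitz bound on $F_i$ can be coupled despite $p_i$ and $G_i(x_k)$ both living in the varying space $X_i$; this is resolved cleanly by the isometric embedding step, which makes all the distance computations occur in the single ambient space $Z$.
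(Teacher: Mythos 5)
Your proof is correct and follows essentially the same route as the paper's: embed via Theorem~\ref{converge}, pull back a countable dense subset of $X_\infty$ via Lemma~\ref{to-a-limit}, diagonalize in the compact target $W$, verify the $K$-Lipschitz bound through distances in the ambient space $Z$, and extend. The only stylistic difference is that you verify (\ref{Fip}) directly with a three-term triangle inequality and a $\limsup$, whereas the paper argues by contradiction with explicit $N_0, N_1, N_i, N'_i$ bookkeeping; your version is a bit cleaner but uses the same underlying estimates.
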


\begin{proof}
By Theorem~\ref{converge}, $\varphi_i:M_i \to Z$
such that $d_F^Z(\varphi_{i\#} T_i , \varphi_\infty T_\infty)\to 0$.

Take any $p_\infty\in X_\infty$.  By Lemma~\ref{to-a-limit}, there
exists $p_i \in X_i$ such that $\lim_{i\to\infty} \varphi_i(p_i)=\varphi_\infty(p_\infty)$.   Their images $F_i(p_i) \in W$ have a subsequence which
converges to some $w\in W$.  Set $F_\infty(p_\infty)=w$.    Recall
that integral current spaces are seperable.   So there is a countable dense subset $X_0\subset X_\infty$.  Thus one may repeat this
process creating subsequences of subsequences for a countable
dense collection of $p\in X_0=X_\infty$.   Diagonalizing, one
obtains the subsequence mentioned in the theorem statement
and a function,
\be
F_\infty: X_0\subset X_\infty \to W.   
\ee
One needs to extend $F_\infty$ to define a limit function from
$X$ to $W$.  Observe
that for all $p, q\in X_0$ there exists $p_i$ and $q_i$ converging to them
such that
\begin{eqnarray}
d_W(F_\infty(p), F_\infty(q))&=&\lim_{i\to \infty} d_W(F_i(p_i), F_i(q_i))\\
&\le&\lim_{i\to \infty} K d_{X_i}(p_i, q_i)\\
&\le&\lim_{i\to \infty} K d_{Z}(\varphi_i(p_i), \varphi_i(q_i))\\
&\le&\ K d_{Z}(\varphi_\infty(p), \varphi_\infty(q))\\
&\le & K d_{X_\infty}(p,q).
\end{eqnarray}
Thus one may extend $F_\infty$ continuously to 
\be
F_\infty: X_\infty \to W
\textrm{ and }\Lip(F_\infty)\le K.   
\ee

Now suppose  $p_i\to p$ as in (\ref{pip})
and proceed to prove (\ref{Fip}).   Assume on the
contrary that there exists a subsequence of $p_i$
also denoted $p_i$ such that
\be
\exists r_0>0 \textrm{ s.t. } d_W(F_i(p_i), F_\infty(p))>r_0.
\ee
By (\ref{pip}), there exists $N_0\in \mathbb{N}$ such that
\be
d_Z(\varphi_i(p_i), \varphi_\infty(p))<r_0/10 \,\,\,\forall i \ge N_0. 
\ee
By the definition of
the continuous extension, there exists $q_j \in X_0$
and there exists $N_1\in \mathbb{N}$ such that
\be\label{inZ1}
d_Z(\varphi_\infty(q_j),\varphi_\infty(p))=d_X(q_j,p)<r_0/(10K)
\,\,\,\forall j\ge N_1
\ee
and
\be\label{inW1}
 d_W(F_\infty(q_j), F_\infty(p))\le 
K d_Z(\varphi_\infty(q_j),\varphi_\infty(p))=K d_X(q_j,p)<r_0/10
\,\,\,\forall j\ge N_1.
\ee

By the definition of $F_\infty: X_0 \to W$, for each fixed $j$,
there exists $q_{j,i} \in X_i$ and $N_j, N'_j \in \mathbb{N}$
with 
\be \label{inZ2}
d_Z(\varphi_i(q_{j,i}), \varphi_\infty(q_j))< r_0/(10K)
\,\,\,\forall i\ge N_j
\ee
and 
\be\label{inW2}
d_W(F_i(q_{j,i}),  F_\infty(q_j))\le r_0/5\,\,\, \forall i \ge N'_j.
\ee

Also $\Lip(F_i)\le K$ implies:
\be\label{inW3}
d_{W}(F_i(p_i), F_i(q_{j,i})) \le Kd_{X_i}(p_i, q_{j,i}).
\ee

Take any $j\ge N_1$ and any $i\ge \max\{N'_j,N_j, N_0\}$.
By (\ref{pip}),(\ref{inZ1}) and (\ref{inZ2}) one has
\begin{eqnarray*}
d_{X_i}(p_i, q_{j,i}) &=& d_Z(\varphi_i(p_i),\varphi_i( q_{j,i})) \\
&\le& d_Z(\varphi_i(p_i),\varphi_\infty(p))
+ d_Z(\varphi_\infty(p), \varphi_\infty(q_j))
+ d_Z(\varphi_i(q_{j,i}), \varphi_\infty(q_j))\\
&\le & 3r_0/(10K) 
\end{eqnarray*}

Combining this with (\ref{inW1}), (\ref{inW2}) and (\ref{inW3}), one
has
\begin{eqnarray*}
d_W(F_i(p_i), F_\infty(p))
&\le &d_W(F_i(p_i), F_i(q_{j,i})) +
d_W(F_i(q_{j,i}), F_\infty(q_j)) + d_W(F_\infty(q_j), F_\infty(p))
\\ 
&\le &Kd_{X_i}(p_i, q_{j,i}) +d_W(F_i(q_{j,i}), F_\infty(q_j))
+  K d_X(q_j,p)\\
&\le & K(3r_0/(10K)) + (r_0/5)+ r_0/10=6r_0/10<r_0,
\end{eqnarray*}
which is a contradiction.
\end{proof}

\begin{rmrk}\label{no-Arz-Asc-IF-to-IF}
Recall that the corresponding Gromov-Hausdorff Arzela-Ascoli
Theorem allows the target spaces to vary as well:
 $F_i: X_i \to W_i$ of Lipschitz maps into
compact metric spaces $W_i$ with $\Lip(F_i)\le K$
where $W_i \GHto W$ and $X_i \GHto X$.  
See for example Grove-Petersen \cite{Grove-Petersen}.   
The corresponding statement allowing both $X_i\Fto X$
and $W_i \Fto W$ is false.  For example,
one may have a sequence of compact connected manifolds, $W_i$, 
which converge in the intrinsic flat sense to a compact metric space, $W$, 
that is not connected \cite{SorWen2}.   In that setting one has a 
sequence of Lipschitz maps which are unit speed geodesics, 
$F_i:[0,1]\to W_i$ where $W_i \Fto W$ with no limiting function
$F:[0,1]\to \bar{W}$.   
\end{rmrk}

\begin{rmrk}\label{Arz-Asc-IF-to-GH}
It should be possible to extend Theorem~\ref{Flat-Arz-Asc}
to sequences $F_i: X_i \to W_i$ of Lipschitz maps into
compact metric spaces $W_i$ with $\Lip(F_i)\le K$
where $W_i \GHto W$ and $X_i \Fto X$ using 
Gromov's Embedding Theorem or the work of Grove-Petersen \cite{Grove-Petersen}.   No applications are known
for such a theorem at this time so there is no need to 
prove this here.
\end{rmrk}


\section{Basic Bolzano-Weierstrass Theorem}

In this section, Theorem~\ref{B-W-BASIC} is proven.
Recall Lemma~\ref{lem-ball} states that for almost every 
radius $S(p,r)$ of (\ref{Spr}) is an
integral current space.   Recall also that,
like any integral current space, 
$d_{\mathcal{F}}(S(p,r),{\bf{0}})=0$ iff $S(p,r)={\bf{0}}$ \cite{SorWen1}.   
If one considers a sequence of integral current 
spaces, $M_i$ with points $p_i$, then for almost
every $r$, $S(p_i,r)$ is an integral current space for
all $i$ in the sequence.   In this basic Bolzano-Weierstrass
Theorem one assumes these $S(p_i,r)$ are kept a 
definite distance away from ${\bf{0}}$ where this distance depends upon on the radius.   A different Bolzano-Weierstrass Theorem which involves the Gromov Filling Volume appears in \cite{Sormani-properties}.

\begin{thm}\label{B-W-BASIC}\label{BW-BASIC}
Suppose $M^m_i=(X_i, d_i, T_i)$ are integral current spaces 
which converge in the intrinsic flat sense to a 
nonzero integral current space 
$M^m_\infty=(X_\infty, d_\infty, T_\infty)$.
Suppose there exists $r_0>0$, a positive function
$h:(0,r_0)\to (0,r_0)$, and a sequence
$p_i \in M_i$ such that for almost every $r\in (0, r_0)$ 
\be 
\liminf_{i\to \infty} d_{\mathcal{F}}(S(p_i,r),0) \ge h(r)>0.
\ee 
Then there exists a subsequence, also denoted $M_i$, such that
$p_{i}$ converges to $p_\infty\in \bar{X}_\infty$.
\end{thm}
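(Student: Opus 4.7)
The plan is to push the sequence $\{p_i\}$ into a common ambient metric space, translate the non-vanishing ball hypothesis into a positivity statement for the mass of the limit current near the images of $p_i$, and then extract a convergent subsequence using compactness of the support of the limit current.

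First, apply Theorem~\ref{converge} to obtain a complete separable metric space $Z$, isometric embeddings $\varphi_i:\bar X_i\to Z$ and $\varphi_\infty:\bar X_\infty\to Z$, and a decomposition
$\varphi_{i\#}T_i - T = A_i + \partial B_i$ with $T:=\varphi_{\infty\#}T_\infty$ and $\mass(A_i)+\mass(B_i)\to 0$. Set $z_i:=\varphi_i(p_i)$ and $\rho_i(z):=d_Z(z,z_i)$. Repeating the slicing argument from Lemma~\ref{balls-converge}, but with distance functions centered at the varying points $z_i$, one has for a.e. $r>0$
\[
\varphi_{i\#}T_i\rstr B(z_i,r)-T\rstr B(z_i,r)= \langle B_i,-\rho_i,-r\rangle+\partial\bigl(B_i\rstr B(z_i,r)\bigr)+A_i\rstr B(z_i,r),
\]
and the Ambrosio--Kirchheim slicing inequality gives $\int \mass(\langle B_i,\rho_i,r\rangle)\,dr\le \mass(B_i)\to 0$. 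Hence after passing to a subsequence and discarding a null set of radii,
\[
d_F^Z\bigl(\varphi_{i\#}T_i\rstr B(z_i,r),\; T\rstr B(z_i,r)\bigr)\longrightarrow 0\qquad\text{for a.e. } r\in(0,r_0).
\]

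Next, translate the hypothesis into an estimate on the limit current. Since $\varphi_{i\#}(S(p_i,r))=\varphi_{i\#}T_i\rstr B(z_i,r)$ and $d_F^Z(T\rstr B(z_i,r),0)\le \|T\|(B(z_i,r))$, the triangle inequality together with the definition of $d_{\mathcal F}$ yields, for a.e. $r\in(0,r_0)$,
\[
0<h(r)\le \liminf_{i\to\infty}d_{\mathcal F}\bigl(S(p_i,r),{\bf 0}\bigr)\le \liminf_{i\to\infty}\|T\|(B(z_i,r)).
\]
Since $M_\infty$ is a precompact integral current space, $\bar X_\infty$ is compact, so $K:=\varphi_\infty(\bar X_\infty)\subset Z$ is compact and contains $\spt\|T\|$. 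Positivity of $\|T\|(B(z_i,r))$ then forces $B(z_i,r)\cap K\ne\emptyset$ for all sufficiently large $i$, i.e.\ $d_Z(z_i,K)<r$ eventually.

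Finally, choose good radii $r_k\searrow 0$ and select an increasing subsequence $i_k$ with $d_Z(z_{i_k},K)<r_k$; pick $y_k\in K$ with $d_Z(z_{i_k},y_k)<r_k$. Compactness of $K$ gives a further subsequence $y_{k_j}\to y_\infty\in K$, and hence $z_{i_{k_j}}\to y_\infty$ in $Z$. Setting $p_\infty:=\varphi_\infty^{-1}(y_\infty)\in\bar X_\infty$, one has $\varphi_{i_{k_j}}(p_{i_{k_j}})\to\varphi_\infty(p_\infty)$, which is precisely convergence in the sense of Definition~\ref{point-conv}. The main technical obstacle is the slicing step, since the basepoints $z_i$ vary; once the $L^1$-to-pointwise extraction is performed the remaining argument is standard compactness, with the crucial observation that even though $Z$ need not be compact, the support of the limit current is trapped in the compact set $\varphi_\infty(\bar X_\infty)$.
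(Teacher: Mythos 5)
Your opening moves are sound, and in one respect cleaner than the paper's own argument: by slicing with the varying distance functions $\rho_i(z)=d_Z(z,\varphi_i(p_i))$ you get the $L^1$ bound $\int \mass(\langle B_i,\rho_i,r\rangle)\,dr\le\mass(B_i)\to 0$ directly, and hence a subsequence with
$d_F^Z\bigl(\varphi_{i\#}T_i\rstr B(z_i,r),\,T\rstr B(z_i,r)\bigr)\to 0$ for a.e.\ $r$, without the extra diagonalization over a fixed index $j$ that the paper uses. Your triangle inequality then correctly yields $0<h(r)\le\liminf_i\|T\|(B(z_i,r))$ for a.e.\ $r\in(0,r_0)$.

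The gap is in the finish. You write ``Since $M_\infty$ is a precompact integral current space, $\bar X_\infty$ is compact, so $K:=\varphi_\infty(\bar X_\infty)$ is compact.'' But the theorem does \emph{not} assume $M_\infty$ is precompact; the remark immediately following the statement explicitly says that neither $M_i$ nor $M_\infty$ need be precompact. Without compactness of $K$, the sequence $y_k\in K$ that you build need not have a convergent subsequence, and the argument collapses. The compactness assumption is not a removable cosmetic detail here; it is doing all the work of extracting the subsequence.

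What actually saves the argument is not compactness of the limit set but finiteness of the total mass $\mass(T_\infty)$ (which holds for \emph{every} integral current). Concretely: if $(z_i)$ has no Cauchy subsequence, then $\{z_i\}$ is not totally bounded, so one can extract a $\delta$-separated subsequence for some $\delta>0$; then for any good $r\in(0,\delta/2)$ the balls $B(z_i,r)\subset Z$ are pairwise disjoint, so $\sum_i\|T\|(B(z_i,r))\le\mass(T_\infty)<\infty$ forces $\|T\|(B(z_i,r))\to 0$, contradicting your own estimate $\liminf_i\|T\|(B(z_i,r))\ge h(r)>0$. Hence some subsequence $z_i$ is Cauchy in the complete space $Z$, converging to some $z_\infty\in Z$. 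One final step is still needed and is missing from your sketch: $z_\infty$ could a priori lie outside $\varphi_\infty(\bar X_\infty)$ (the ``disappearing point'' scenario). This is ruled out by Lemma~\ref{balls-converge}, which says that a Cauchy sequence with no limit in $\bar X_\infty$ has $S(p_i,r)\Fto{\bf 0}$ for a.e.\ small $r$ --- again contradicting the hypothesis. This is precisely the route the paper takes; you should replace the compactness-of-$K$ step with the disjoint-ball, finite-mass argument and then invoke Lemma~\ref{balls-converge} to place the limit in $\bar X_\infty$.
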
 

\begin{rmrk}
Note that $M_i$ and $M_\infty$ are not required to be precompact.   The $M_i$ are not required to have uniformly bounded mass or volume. 
The key hypothesis is that the $M_i \Fto M_\infty$ and that $M_\infty$
has finite mass.   For this reason there is not enough room to fit too many balls of mass $h(r)$ in $M_\infty$.   This allows us to produce a converging
subsequence in the style of a classical Bolzano-Weierstrass Theorem.   
\end{rmrk}

\begin{rmrk}
It is possible that $p_\infty\notin X_\infty$ as can be seen by taking 
all the $M_i=M_\infty$ a manifold $M$ with a cusp singularity at $p_\infty$
so that $M_\infty=M\setminus p_\infty$ and $p_i$ a sequence of points 
approaching $p_\infty$.
\end{rmrk}

\begin{proof}
By Theorem~\ref{converge} 
there exists a common metric space
$Z$ and isometric embeddings $\varphi_j: X_j \to Z$ and $\varphi_\infty:X_\infty \to Z$ such that
\be \label{A-j-B-j} 
\varphi_{j\#}T_j-T=\partial B_j + A_j
\ee
where $A_j\in \intcurr_m(Z)$ and $B_j\in \intcurr_{m+1}(Z)$ 
with
\be
\mass(A_j)+\mass(B_j) \to 0
\ee
and where
\be
T=\varphi_{\infty\#}T_\infty\in \intcurr_m(Z). 
\ee

One needs only show that a subsequence of $\varphi_i(p_i)$ is a Cauchy sequence.   Once this is done,
one can apply Lemma~\ref{balls-converge} to the subsequence.   In that lemma, it is shown that a Cauchy sequence,
$p_i$, converges to $p_\infty\in \bar{X}_\infty$ unless
there is a radius $r$ sufficiently small that $S(p_i,r) \Fto \bf{0}$.  Since this is not allowed by the
hypothesis of the theorem being proven, one sees that the subsequence converges to
$p_\infty \in \bar{X}_\infty$ as desired.

So one needs only prove that a subsequence $\varphi_i(p_i)$ converges in $Z$.
This is not immediate because $Z$ is only complete and need not be compact.   

Assume on the contrary that 
\be
\exists \delta >0 \,\,\,s.t.\,\,\, d_Z(\varphi_i(p_i), \varphi_j(p_j))\ge \delta \,\,\, \forall i,j\in \mathbb{N}.
\ee
Let $\rho_i(x)=d_Z(\varphi_i(p_i),x)$, then
for almost every $r\in (0,r_0)\cap (0,\delta/2)$,
\be
\rho_i^{-1}(-\infty,r)\cap \rho_j^{-1}(-\infty,r)=\emptyset \qquad \forall i,j\in \mathbb{N}.
\ee
Now
\begin{eqnarray}
\,\,\,\,\,\,(\varphi_{i\#}T_i) \rstr \rho_j^{-1}(-\infty,r)& - &\varphi_{\infty\#}T_\infty\rstr \rho_j^{-1}(-\infty,r)=\\
&=&(\partial B_i) \rstr \rho_j^{-1}(-\infty,r)+ A_i\rstr  \rho_j^{-1}(-\infty,r)\\
&=& <B_i, \rho_j, r> + \,\partial \left(B_i \rstr \rho_j^{-1}(-\infty,r)\right)
\\
&&\qquad+\,\,\,A_i\rstr  \rho_j^{-1}(-\infty,r).
\end{eqnarray}
Thus $
d_F^Z\left(\varphi_{i\#}T_i \rstr \rho_j^{-1}(-\infty,r) ,\varphi_{\infty\#}T_\infty\rstr \rho_j^{-1}(-\infty,r)\right)\le$
\begin{eqnarray}
&\le&\,\, f_{ij}(r) + \,\,\,\mass(B_i \rstr \rho_j^{-1}(-\infty,r))
\,\,+\,\, \mass(A_i\rstr  \rho_j^{-1}(-\infty,r)) \\
&\le& \,\,f_{ij}(r) \,\,\,+ \,\,\,\mass(B_i) \,\,\,+\,\,\, \mass(A_i)
\end{eqnarray}
where
\be
f_{ij}(r)=\mass(<B_i, \rho_j, r>).
\ee  
By the Ambrosio-Kirchheim Slicing Theorem, for fixed $j\in \mathbb{N}$,
\begin{eqnarray}
\int_{-\infty}^{\infty} f_{ij}(r) \, dr 
&=&\int_{-\infty}^{\infty} \mass(<B_i, \rho_j, r>) \, dr \\
&=&\quad \mass(B_i\rstr d\rho_j)
\,\,\, \le \,\,\,Lip(\rho_j) \mass(B_i)\,\,\,\le\,\,\, \mass(B_i)
\end{eqnarray}
which converges to $0$ as $i\to \infty$.   Thus for fixed $j$ and almost every 
$r$ there is a subsequence $i'\to \infty$ such that 
$\lim_{i'\to\infty} f_{i'j}(r)=0$ pointwise.  Diagonalizing, there is a subsequence $i"$
such that for all $j$, 
$\lim_{i'\to\infty} f_{i''j}(r)=0$ pointwise.  

Thus for almost every $r\in (0,r_0)\cap (0,\delta/2)$, there is a subsequence $i''$ such that for all $j\in \mathbb{N}$,
\be \label{i''}
d_F^Z\left(\varphi_{i''\#}T_{i''} \rstr \rho_j^{-1}(-\infty,r), \varphi_{\infty\#}T_\infty \rstr \rho_j^{-1}(-\infty,r)\right)\to 0
\ee
Since the balls are disjoint,
\be
\mass(T_\infty)  \ge \sum_{j=1}^N \mass\left(\varphi_{\infty\#}  T_\infty \rstr \rho_j^{-1}(-\infty,r) \right).
\ee
Thus
\be
\limsup_{j\to \infty} \mass\left(\varphi_{\infty\#} T_\infty \rstr \rho_j^{-1}(-\infty,r) \right)=0.
\ee
So
\be
\limsup_{j\to \infty} d_F^Z\left(\varphi_{\infty\#} T_\infty \rstr \rho_j^{-1}(-\infty,r),\bf{0} \right)=0.
\ee
In particular, for $j$ sufficiently large
\be
d_F^Z\left(\varphi_{\infty\#} T_\infty \rstr \rho_j^{-1}(-\infty,r),\bf{0} \right)<h(r)/2.
\ee
Combining this with (\ref{i''}), for $i"$ sufficiently large
\be
d_{\mathcal{F}}\left(S(p_{i"},r), {\bf{0}}\right)\le
d_F^Z\left(\varphi_{i"\#} T_{i"} \rstr \rho_j^{-1}(-\infty,r),\bf{0} \right)<h(r)/2
\ee
which contradicts the hypothesis.
Thus there is a subsequence $\varphi_i(p_i)$ 
which converges to some point $z_\infty \in Z$ exactly as needed.
\end{proof}

\vspace{.4cm}
\section{Limits of Uniformly Local Isometries}
 
In this section as Arzela-Ascoli Theorem which allows both
the domain and the target spaces to converge in the intrinsic
flat sense.   This theorem applies to sequences of oriented Riemannian manifolds $M_i$
with 
\be
\vol(M_i)\le V_i \textrm{ and } \vol(\partial M_i) \le A_i
\ee
and functions $F_i: M_i \to M'_i$ which are orientation preserving local
isometries that are isometries on balls of a fixed radius, $\delta>0$
which is uniform for the sequence.

\begin{thm}\label{Arz-Asc-Unif-Local-Isom}
\footnote{A similar theorem with slightly different hypothesis originally appeared with a 
fundamentally different and more difficult proof involving
Gromov filling volumes in an early preprint version of \cite{Sormani-properties}.   That theorem will not appear 
in any publication of \cite{Sormani-properties} as this
theorem is an improvement.}
Let $M_i=(X_i, d_i, T_i)$
and $M'_i=(X'_i,d'_i,T'_i)$  be integral current spaces 
such that 
\be
M_i \Fto M_\infty \textrm{ and } M'_i \Fto M'_\infty.
\ee

Fix $\delta>0$.
Let $F_i: M_i \to M'_i$ be continuous maps which are current preserving isometries
on balls of radius $\delta$ in the sense that:
\be \label{iso-sat} 
\forall x\in X_i, \,\, F_i: \bar{B}(x,\delta) \to \bar{B}(F_i(x),\delta)\textrm{ is an isometry}
\ee
and
\be\label{curpres}
F_{i\#}(T_i\rstr B(x,r))=T'_i \rstr B(F(x),r) \textrm{ for almost every } r\in (0,\delta).
\ee
Then, when $M_\infty\neq {\bf{0}}$, one has $M'_\infty \neq {\bf{0}}$ and
there is a subsequence, also denoted $F_i$, which
converges to a (surjective) local isometry
\be
F_\infty: \bar{X}_\infty \to \bar{X}'_\infty.
\ee
More
specifically, there exists isometric embeddings
of the subsequence $\varphi_i: X_i \to Z$,
$\varphi'_i: X'_i \to Z'$,
such that 
\be
d_F^Z(\varphi_{i\#} T_i , \varphi_{\infty\#} T_\infty)\to 0 \textrm{ and }
d_F^{Z'}(\varphi'_{i\#} T'_i , \varphi'_{\infty\#} T'_\infty)\to 0
\ee
and for any sequence $p_i\in X_i$ converging to $p\in X_\infty$:
\be
\lim_{i\to\infty} \varphi_i(p_i)=\varphi_\infty(p) \in Z
\ee
one has
\be \label{iso-infty}
\lim_{i\to\infty}\varphi_i'(F_i(p_i))=\varphi_\infty'(F_\infty(p_\infty)) \in Z'.
\ee
When $M_\infty={\bf{0}}$ and $F_i$ are surjective, one has
$M'_\infty={\bf{0}}$.
\end{thm}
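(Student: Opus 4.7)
The plan is to leverage Theorem~\ref{converge} for both the domain and target sequences, then build $F_\infty$ by combining Lemma~\ref{to-a-limit}, Lemma~\ref{balls-converge}, and Theorem~\ref{B-W-BASIC}. The essential observation that makes everything work is a \emph{transfer principle}: because each $F_i$ is a current-preserving isometry on $\bar{B}(x,\delta)$ by hypotheses (\ref{iso-sat}) and (\ref{curpres}), the integral current spaces $S(x,r)$ and $S'(F_i(x),r)$ are isometric as integral current spaces for almost every $r\in(0,\delta)$, hence
\be
d_{\mathcal{F}}(S(x,r),{\bf 0}) \;=\; d_{\mathcal{F}}(S'(F_i(x),r),{\bf 0}).
\ee

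First I would invoke Theorem~\ref{converge} to obtain complete metric spaces $Z,Z'$ with isometric embeddings $\varphi_i,\varphi_\infty$ into $Z$ and $\varphi'_i,\varphi'_\infty$ into $Z'$ that implement the two intrinsic flat convergences. Fix a countable dense subset $\{q_k\}\subset X_\infty$ and, by Lemma~\ref{to-a-limit}, pick $q_{k,i}\in X_i$ with $\varphi_i(q_{k,i})\to \varphi_\infty(q_k)$. For each $k$, Lemma~\ref{balls-converge} yields a subsequence along which $S(q_{k,i},r)\Fto S(q_k,r)$ for almost every small $r$; since $q_k\in \set(T_\infty)$ the limit is nonzero for small $r$, so $d_{\mathcal F}(S(q_{k,i},r),{\bf 0})$ is bounded below by some $h_k(r)>0$. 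The transfer principle gives the same lower bound for $d_{\mathcal F}(S'(F_i(q_{k,i}),r),{\bf 0})$, so Theorem~\ref{B-W-BASIC} extracts a further subsequence along which $F_i(q_{k,i})$ converges in $Z'$ to some $q'_k\in \bar{X}'_\infty$ (forcing in particular $M'_\infty\neq{\bf 0}$). A Cantor diagonalization produces a single subsequence valid for every $k$; set $F_\infty(q_k):=q'_k$.

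Next I would upgrade $F_\infty$ to a local isometry on all of $\bar{X}_\infty$. When $d_\infty(q_j,q_k)<\delta$, eventually $d_i(q_{j,i},q_{k,i})<\delta$ and (\ref{iso-sat}) gives $d'_i(F_i(q_{j,i}),F_i(q_{k,i}))=d_i(q_{j,i},q_{k,i})$; passing to the limit (distances are preserved under the embedded convergence of points) yields $d'_\infty(F_\infty(q_j),F_\infty(q_k))=d_\infty(q_j,q_k)$. Thus $F_\infty$ is a $\delta$-local isometry on the dense set and extends uniquely by Cauchy sequences to $\bar{X}_\infty$, still a $\delta$-local isometry. Current preservation on small balls passes to the limit by combining the weak convergence $\varphi_{i\#}T_i\weaklyto\varphi_{\infty\#}T_\infty$ with (\ref{curpres}) and the identification of ball currents in Lemma~\ref{balls-converge}. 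The convergence statement (\ref{iso-infty}) for an arbitrary sequence $p_i\to p$ then follows by a three-$\varepsilon$ argument approximating $p$ by dense points $q_k$ and invoking the local isometry, exactly as in the proof of Theorem~\ref{Flat-Arz-Asc}.

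For surjectivity when the $F_i$ are onto, given $p'_\infty\in X'_\infty$ use Lemma~\ref{to-a-limit} to pick $p'_i\to p'_\infty$ and choose preimages $p_i\in F_i^{-1}(p'_i)$. The transfer principle bounds $d_{\mathcal F}(S(p_i,r),{\bf 0})$ below by $d_{\mathcal F}(S'(p'_i,r),{\bf 0})$, which is in turn uniformly positive by Lemma~\ref{balls-converge} applied to $p'_i\to p'_\infty\in \set(T'_\infty)$, so Theorem~\ref{B-W-BASIC} gives $p_i\to p_\infty\in \bar{X}_\infty$ along a subsequence, and (\ref{iso-infty}) forces $F_\infty(p_\infty)=p'_\infty$. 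The case $M_\infty={\bf 0}$ with $F_i$ surjective is handled by the same preimage trick: assuming $M'_\infty\neq{\bf 0}$ and choosing $p'_\infty, p'_i, p_i$ as above, the transfer principle keeps $d_{\mathcal F}(S(p_i,r),{\bf 0})$ bounded below, but $M_i\Fto{\bf 0}$ forces $S(p_i,r)\Fto{\bf 0}$ by the last clause of Lemma~\ref{balls-converge}, a contradiction. The main obstacle I foresee is verifying that current preservation (not merely metric isometry) genuinely passes to the limit on small balls, since this requires carefully combining weak convergence of the push-forwards with the almost-everywhere slicing identification of ball currents used in Lemma~\ref{balls-converge}.
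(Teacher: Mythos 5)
Your proposal is correct and tracks the paper's own proof essentially step for step: embedding both sequences via Theorem~\ref{converge}, using Lemma~\ref{to-a-limit}, Lemma~\ref{balls-converge}, and Theorem~\ref{B-W-BASIC} together with what you call the ``transfer principle'' (that $F_i$ being a current-preserving $\delta$-local isometry makes $S(p_i,r)$ and $S(F_i(p_i),r)$ have equal intrinsic flat distance to $\bf{0}$) to build $F_\infty$ on a countable dense set, diagonalizing, extending by local isometry, running the three-$\varepsilon$ argument, and handling surjectivity and the $M_\infty={\bf 0}$ case by pulling back preimages. The one place you diverge is your closing worry about transferring \emph{current preservation} to $F_\infty$: that is not actually required, since the theorem only asserts $F_\infty$ is a metric local isometry, and the paper explicitly defers current preservation of the limit map to a remark as a technical open issue not needed here — so you can drop that concern entirely.
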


\begin{rmrk}\label{r-AA-ULI}
Example~\ref{ex-AA-ULI} describes the necessity of the uniformity
condition (\ref{iso-sat}) in Theorem~\ref{Arz-Asc-Unif-Local-Isom}.  
\end{rmrk}

\begin{rmrk}
It may be possible to prove that the limit map here is
also current preserving on balls of radius less than $\delta$.
This is technical and not needed for present applications but
might be an interesting investigation in the future.
\end{rmrk}

\begin{rmrk}\label{r-biLip}
It may be possible to prove a similar theorem replacing the 
surjective uniformly
local isometries
with surjective uniformly local uniformly bi-Lipschitz maps but the proof would
be fairly technical and there is no immediate application for this
at this time.  
\end{rmrk}

Theorem~\ref{Arz-Asc-Unif-Local-Isom} is now proven:

\begin{proof}
By Theorem~\ref{converge} there exists $\varphi_i:M_i \to Z$
such that 
\be
d_F^Z(\varphi_{i\#} T_i , \varphi_{\infty\#} T_\infty)\to 0
\ee
and
$\varphi'_i:M'_i \to Z'$
such that 
\be
d_F^{Z'}(\varphi'_{i\#} T'_i , \varphi'_{\infty\#} T'_\infty)\to 0.
\ee

Assuming $M'_\infty\neq {\bf{0}}$,
one must first find a subsequence and construct the limit function 
$F_\infty: P \to X'_\infty$ satisfying (\ref{iso-infty}) for all $p\in P$
where $P$ is a countably dense collection of points in $X_\infty$.

Take any $p\in P$.  Recall
$S(p,r)=(\set(T_\infty \rstr B(p,r)), d_\infty,T_\infty \rstr B(p,r))$ is defined for almost every $r$.
Since $p\in X_\infty$, and $X_\infty=\set(T_\infty)$,
\be
\liminf_{r\to 0}\mass(S(p,r)) / r^m =
\liminf_{r\to 0}||T_\infty||(B(p,r)) / r^m 
>0.
\ee
In particular 
\be \label{r_p}
S(p,r) \neq {\bf{0}}. 
\ee

By Lemma~\ref{to-a-limit} there exists
$p_i \in X_i$ such that 
\be
\lim_{i\to\infty} \varphi_i(p_i)=\varphi_\infty(p).
\ee
By Lemma~\ref{balls-converge}, for almost every $r_\infty>0$,
there is a subsequence (also denoted $i$) such that
\be
d_{\mathcal{F}}\left(S(p_i,r_\infty),S(p,r_\infty)\right)\to 0.
\ee
Taking $r_\infty=\delta$, applying (\ref{curpres}) one has
\be
F_{i\#}S(p_i,r_\infty)=S(p'_i,r_\infty) \textrm{ where } p'_i=F_i(p_i)
\ee
so
\be
d_{\mathcal{F}}\left(S(p'_i,r_\infty),S(p,r_\infty)\right)\to 0.
\ee
Combining via the triangle inequality with (\ref{r_p}), 
\be
\liminf_{i\to\infty}d_{\mathcal{F}}\left(S(p'_i,r_\infty),{\bf{0}}\right)>0.
\ee
Thus applying the basic Bolzano-Weierstrass Theorem [Theorem~\ref{B-W-BASIC}]
to $S(p_i',r_\infty)$, to see there is a $p_\infty \in \bar{X}'_\infty$ and a further subsequence (also denoted $i$) 
such that $p'_i \to p'_\infty$ in the sense that
\be
\varphi'_i(p'_i) \to \varphi'_\infty(p'_\infty) \in Z'.
\ee
Define $F_\infty(p)=p_\infty$.   

Repeat this process to choose subsequences and $p_\infty$ for each $p$ in the countable collection 
$P\subset  X_\infty$.   Diagonalize to obtain the subsequence
in that statement of the theorem (also denoted $M_i$). 
Thus $F: P \to \bar{X}'_\infty$ is defined such that
\be
\varphi_\infty(F_\infty(p))=\lim_{i\to\infty} \varphi'_i(F_i(p_i))\in Z'.
\ee
To see that $F$ is distance preserving
for any $p,q$ in a ball of radius $\delta$ in $X_\infty$:
\begin{eqnarray}
d_{\bar{X}_\infty}\left(F_\infty(p), F_\infty(q)\right)
&=& d_{Z'}\left(\varphi_\infty(F_\infty(p)),\varphi_\infty(F_\infty(q))\right)\\
&=&\lim_{i\to\infty} d_{Z'}\left(\varphi'_i(F_i(p_i)), \varphi'_i(F_i(q_i)) \right)\\
&=&\lim_{i\to\infty} d_{Z}\left(\varphi_i(p_i), \varphi_i(q_i) \right)\\
&=&d_Z(\varphi_\infty(p),\varphi_\infty(q))=d_{X_\infty}(p,q).
\end{eqnarray}
In particular $F: P \to \bar{X}'_\infty$ is continuous and can be extended to the metric completion,
$F_\infty: \bar{X}_\infty \to \bar{X}'_\infty$ which is an isometry on balls of radius $\delta$.

For any sequence $q_i \in X_i$ converging to $q\in X_\infty$ 
one must show $F_i(q_i)$ converges to $F(q)$.   Assume on
the contrary that this fails:
\be\label{contra-here}
\exists r_0>0 \,\,\exists N_0\in \mathbb{N}\,\,\textrm{ s.t.}\,\,
d_{Z'}(\varphi'_i(F_i(q_i)), \varphi'_\infty(F_\infty(q)))>r_0.
\ee
Since $q_i\to q$, there is an $N_0$ 
sufficiently large that
\be\label{N-0}
d_{X_\infty}(\varphi_i(q_i),\varphi_\infty(q))\le r_0/10.
\qquad \forall i\ge N_0.
\ee
Take $x_j \in P\subset X_\infty$ converging to $q$, and $N_1$ large
enough that
\be\label{N-1}
d_{Z}(\varphi_\infty(x_j)), \varphi_\infty(q)<r_0/10 \qquad \forall j\ge N_1.
\ee
For each $i$, take $x_{j,i} \in X_i$ converging to $x_j$ such that $F_j(x_{j,i})\to F_\infty(x_j)$.   That is there exists $N_i$
and $N'_i$ sufficiently large that
\be\label{N-i}
d_Z(\varphi_i(x_{j,i}),\varphi_\infty(x_j)) < r_0/10 \,\, \forall j \ge N_i
\ee
and
\be\label{N'-i}
d_{Z'}\left(\varphi'_i(F_i(x_{j,i})),\varphi'_\infty(F_\infty(x_j))\right) 
< r_0/10 \,\, \forall j \ge N'_i.
\ee
Since $F_i$ and $F_\infty$ are local isometries
both are distance nonincreasing.  In addition
one has $F_i \circ \varphi_i=\varphi_i'\circ F_i$.
Thus one has for
$i \ge N_0$ and $j\ge \max\{N_1, N_i, N'_i\}$,
\begin{eqnarray*}
d_{Z'}(\varphi'_\infty(F_\infty(q)),\varphi'_i(F_i(q_i)))
&\le &
d_{Z'}(\varphi'_\infty(F_\infty(q)), \varphi'_\infty(F_\infty(x_j))    )\\
&&\qquad+\,\, d_{Z'}(\varphi'_\infty(F_\infty(x_j)), \varphi'_i(F_i(x_{j,i})) )\\
&&\qquad \qquad
+\,\,d_{Z'}(  \varphi'_i(F_i(x_{j,i}))    ,\varphi'_i(F_i(q_i)))\\
&\le &
d_{X'_\infty}(F_\infty(q), F_\infty(x_j))
+ r_0/10 \\
&&\qquad +\,\,
d_{X'_i}(  F_i(x_{j,i}),F_i(q_i))
\,\,\,\textrm{ by } j\ge N'_i,\\
&\le &
d_{X_\infty}(q,x_j)
+ r_0/10 + 
d_{X_i}( x_{j,i},q_i) 
\\
&\le &
r_0/10+ r_0/10 + d_{Z}(\varphi_i( x_{j,i}),\varphi_i(q_i))
\textrm{ by } j \ge N_1, 
\\
&\le &
r_0/5+ d_{Z}(\varphi_i( x_{j,i}),\varphi_\infty(x_j))
+d_{Z}(\varphi_\infty(x_j), \varphi_\infty(q))\\
&&\qquad\,\,\,+\,\,\,d_{Z}(\varphi_\infty(q),\varphi_i(q_i))\\
&\le &
r_0/5+ d_{X_i}( x_{j,i}, x_j)
+d_{X_\infty}(x_j, q)
+ r_0/10 \textrm{ by } i\ge N_0,\\
&\le &
r_0/5+ r_0/10
+r_0/10
+r_0/10 \textrm{ by } j\ge N_i,
\end{eqnarray*}
which contradicts (\ref{contra-here}).


To see that $F_\infty$ is surjective when $F_i$ are surjective, take any $x\in X'_\infty$.
 so
\be \label{onto-start}
\liminf_{r\to 0}\mass(S(x,r)) / r^m >0.
\ee
In particular 
\be\label{r_x}
\exists r_x >0 \,\,s.t.\,\,\,S(x,r) \neq {\bf{0}} \qquad a.r. \,\, r<r_x.
\ee
By Lemma~\ref{to-a-limit} there exists
$x_i \in X'_i$ such that 
\be
\lim_{i\to\infty} \varphi'_i(x_i)=\varphi'_\infty(x)
\ee
and by Lemma~\ref{balls-converge}, for almost every $r>0$
there is a subsequence (also denoted $i$) such that
\be
d_{\mathcal{F}}(S(x_i,r),S(x,r))\to 0.
\ee
Since $F_i$ are surjective, there exists $p_i\in X_i$ such that $F_i(p_i)=x_i$.
However, for almost every $r<\delta$,
\be
F_{i\#}S(p_i,r)=S(x_i,r) 
\ee
so
\be
d_{\mathcal{F}}(S(p_i,r),S(x,r))\to 0.
\ee
and
\be \label{onto-end}
\liminf_{i\to\infty}d_{\mathcal{F}}(S(p_i,r),{\bf{0}})=h>0.
\ee
Thus applying the basic Bolzano-Weierstrass Theorem [Theorem~\ref{BW-BASIC}], there is a further subsequence of the $p_i$ which converges to a $p_\infty \in X_\infty$.   
To see that $F_\infty(p_\infty)=x$ observe that
\begin{eqnarray}
\varphi_\infty(F_\infty(p_\infty))&=&\lim_{i\to\infty} \varphi_i(F_i(p_i))\\
&=&\lim_{i\to\infty} \varphi_i(x_i)=\varphi_\infty(x_\infty).
\end{eqnarray}

Now suppose $M_\infty={\bf{0}}$.   One needs only show that $M'_\infty={\bf{0}}$.  If not there
exists $x\in X'_\infty$ such that (\ref{onto-start})-(\ref{onto-end}) hold.
However by Lemma~\ref{balls-converge}
\be 
\lim_{i\to\infty}d_{\mathcal{F}}(S(p_i,r),{\bf{0}})=0
\ee
which contradicts (\ref{onto-end}).
\end{proof}

\begin{example}\label{ex-AA-ULI}
The hypothesis that a uniform $\delta>0$ exists such that
(\ref{iso-sat}) holds is necessary.  This can be seen by
taking $M_i$ to be standard flat $1\times 1$ tori and
$M'_i$ to be flat $1\times (1/i)$ tori.  Let $F_i: M_i \to M'_i$
be the $i$ fold covering maps which are surjective local 
isometries on balls of radius $\delta_i=1/(2i)$.   Then 
$M_i$ converges in the intrinsic flat sense to a standard flat
torus while $M'_i$ converges in the intrinsic flat sense to
the $\bf{0}$ integral current space.  Thus there cannot be
any limit map $F_\infty$.
\end{example}

\section{Example with no Intrinsic Flat Limit} \label{sect-example}

The theorems in this paper may be applied to prove certain sequences
of Riemannian manifolds do not converge or converge to specific
Riemannian manifolds.   One such example is provided here.  Further examples will appear in joint work with
Basilio \cite{Basilio-Sormani-1}. 

\begin{example}\label{ex-no-limit}
There exists a sequence of smooth Riemannian manifolds with
boundary with constant sectional curvature such that $\vol_{m-1}(\partial M_j)\le A_0$,
$\diam(M_j) \le D_0$ such that no subsequence converges in the
intrinsic flat or Gromov-Hausdorff sense not even to $\bf{0}$.
\end{example}

\begin{proof}
Let $M_j$ be the $j$ fold covering space of 
\be
N_j=\mathbb{S}^2\setminus\left( B_{p_+}(1/j), B_{p_{-}}(1/j)\right)
\ee
where $\mathbb{S}^2$ is endowed with the standard metric tensor $g_{\mathbb{S}^2}$ which is lifted to $M_j$ and
$p_+$ and $p_-$ are opposite poles.   Let $d_j$ be the
length metric on $M_j$ defined by this metric tensor.

Then
\be
\diam(M_j) \le \pi + j 2\pi(1/j) +\pi = 4\pi
\ee
and
\be
\vol_{m-1}(\partial M_j)\le j \vol_{m-1}(\partial N_j)\le j 2 (2\pi/j)=4\pi
\ee
but 
\be
\lim_{j\to \infty}\frac{\vol_m(M_j)}{j}=\lim_{j\to\infty}\vol(N_j)=\vol(\mathbb{S}^2)= 4\pi.
\ee

Suppose on the contrary that
a subsequence converges $M_{j'}\Fto M_\infty$.

Case I: $M_\infty =\bf{0}$.  If so, then by Lemma~\ref{balls-converge},  any sequence $q_j\in M_j$ and almost every
$r>0$, there is a subsequence $S(q_{j''}, r) \Fto \bf{0}$.   
Take $q_j$ lying on
the equator and choose an $r<1/2$.  Then by the convexity of balls
one has
\be
S(q_j,r) = \left(B(p_0,r), d_{\mathbb{S}^2}, \int_{B(p_0,r)} \right)
\ee
are all isometric to one another.   Thus they do not converge to $\bf{0}$ and there is a contradiction.

Case II: $M_\infty \neq \bf{0}$. Let $x_{j,1},x_{j,2},...,x_{j,j}$ lie on the equator of $X_j$
so that
\be
d_{X_j}(x_{j,i},x_{j,k}) \ge  \pi \qquad \forall i,k \in \{1,2,..,j\}.
\ee
Observe also that
$
B(x_{j,k},\pi/4)
$
are disjoint and are all isometric to a ball $B(x,\pi/4)$ in a standard sphere.
Thus
\be
d_{\mathcal{F}}( S(x_{j,k},\pi/4), S(x, \pi/4))=0 \qquad \forall k \in \{1,2,..,j\}.
\ee
and
\be
d_{\mathcal{F}}( S(x_{j,k},\pi/4), {\bf{0}})=
h_0=d_{\mathcal{F}}( S(x, \pi/4), {\bf{0}})>0 \qquad \forall k \in \{1,2,..,j\}.
\ee
Applying Theorem~\ref{BW-BASIC}, there is a subsequence of each
$x_{k,j}$ must converge to some $x_k \in \bar{X}_\infty$.    Diagonalizing, there
is a subsequence (also denoted $M_j$) such that $x_{k,j}\to x_k$ for all $k$:
so that
\be
d_{X_\infty}(x_k, x_{k'})\ge \pi
\ee
so that
$
B(x_{j,k},\pi/4)
$
are disjoint.  Applying Lemma~\ref{balls-converge},
\be
\lim_{j\to\infty}d_{\mathcal{F}}( S(x_{j,k},\pi/4), S(x_k, \pi/4))=0 .
\ee
and so
\be
d_{\mathcal{F}}( S(x_{k},\pi/4), S(x, \pi/4))=0. 
\ee
Thus $M_\infty$ contains infinitely many balls of the same mass, which contradicts the
fact that
$\mass(T_\infty)$ is finite.
\end{proof}

\section{Further Applications}\label{sect-Appl}

In this section a number of applications of
the theorems in this paper are discussed.

\begin{rmrk} \label{r-AA-BIG}
In \cite{Burago-Ivanov-Vol-Tori}, Burago and Ivanov prove that
the volume growth of the universal cover of a Riemannian manifold 
homeomorphic to a torus is at least that of Euclidean space.
If it is exactly equal, then they have a rigidity theorem
stating that the Riemannian manifold is flat.
Theorem~\ref{Arz-Asc-Unif-Local-Isom} may be useful
in the study of questions arising in Gromov's work
\cite{Gromov-Hilbert-I} analyzing
the almost rigidity of Burago-Ivanov's Theorem (where the
volume growth is close to that of Euclidean space).   Examples
related to this question applying Theorem~\ref{Arz-Asc-Unif-Local-Isom} will appear in upcoming work of the author with Jorge Basilio
\cite{Basilio-Sormani-1}.
\end{rmrk}

\begin{rmrk}\label{r-Wei-1}
Theorem~\ref{Arz-Asc-Unif-Local-Isom} 
should be useful when wishing to study limits
of covering maps and analyzing the existence of a universal
cover of an intrinsic flat limit.  Recall that in joint work with
Wei, the author has conducted such an analysis of Gromov-Hausdorff
limits \cite{SorWei1}.   Zahra Sinaei and the author are exploring
applications in this direction in \cite{Sinaei-Sormani-1}
\end{rmrk}

\begin{rmrk}\label{r-Wei-2}
Theorem~\ref{Arz-Asc-Unif-Local-Isom} 
should also be useful when studying 
how covering spectra behave under intrinsic flat 
convergence.   See joint work of the author with
Wei in which it was shown that covering spectra
behave continuously under Gromov-Hausdorff
convergence \cite{SorWei3}.  
\end{rmrk}

\begin{rmrk}\label{r-funct}
Theorem~\ref{Flat-Arz-Asc} may possibly be applied to study the limits of harmonic functions, eigenfunctions and heat kernals.  Recall that
Cheeger-Colding proved the convergence of eigenfunctions and eigenvalues 
when the Riemannian manifolds are converging in the measured Gromov-Hausdorff sense with a uniform lower bound on Ricci curvature \cite{ChCo-PartIII}.   Ding has proved the convergence of heat kernels in this setting
\cite{Ding-heat}.  Building on work of Fukaya \cite{Fukaya-1987}, Sinaei has
proven the convergence of harmonic maps in this setting with additional
conditions \cite{Sinaei-Harmonic}.
Portegies has shown that eigenvalues need not converge 
when one only has intrinsic flat convergence without a volume bound, but
building on work of Fukaya \cite{Fukaya-1987} has shown the eigenvalues 
semiconverge as long as the volume converges \cite{Portegies-F-evalue}.
It would be interesting to examine what happens to the eigenfunctions
and heat kernels in this setting.   
\end{rmrk}

\begin{rmrk}\label{r-Ricci}
Recall that in \cite{SorWen1}, the author and Wenger proved that
intrinsic flat and Gromov-Hausdorff limits agree when the sequence
of manifolds has nonnegative Ricci curvature and no boundary.  In
particular, there are no disappearing sequences of points in this setting.   In that paper Gromov filling volumes and a contractibility theorem of Perelman was required to complete the argument.
Theorems~\ref{Flat-Arz-Asc} and~\ref{BW-BASIC} should allow one  to prove that there are no
disappearing points as long as the manifolds have a uniform lower bound
on Ricci curvature without resorting to the work of Gromov or Perelman.   A recent preprint by Munn applies theorems from this paper to address this question \cite{Munn-F=GH}.
\end{rmrk}

\begin{rmrk}\label{Lee-Sormani}
In joint work with Dan Lee \cite{LeeSormani1}, 
it has been conjectured that sequences of manifolds with 
nonnegative scalar curvature and no interior minimal surfaces
whose ADM mass converges to $0$ must converge in the
pointed intrinsic flat sense to Euclidean space.  The 
conjecture is proven in that paper
in the rotationally symmetric case.   Lan-Hsuan Huang
and Dan Lee have results working towards proving this
conjecture whenever the manifolds are graphs \cite{Huang-Lee-Graph}.  Their work combined with the results in this paper 
may help complete the proof of this conjecture in the graph case.
More precisely, one should be able to apply Theorem~\ref{Flat-Arz-Asc} to the natural embedding maps from the manifolds to their
images as graphs in Euclidean space and then use theorems
of Huang-Lee regarding the limits of those graphs.
\end{rmrk}

\begin{rmrk}\label{new-examples}
It can be very difficult to prove a sequence of manifolds converges
in the intrinsic flat sense to a particular limit.   In the original
paper introducing intrinsic flat convergence \cite{SorWen2}, 
the author and Stefan Wenger had to construct sequences of
filling manifolds explicitly to prove these examples converged.  In
joint work with Sajjad Lakzian, theorems were proven to allow
one to construct intrinsic flat limits as long as the manifolds were
converging smoothly on sufficiently nice subregions \cite{Lakzian-Sormani}.   Additional such theorems were proven by
Lakzian in \cite{Lakzian-Diameter} and applied to 
Ricci flow through singularities by Lakzian in \cite{Lakzian-Cont-Ricci}.    Theorem~\ref{BW-BASIC} may now be applied to
prove sequences converge in the intrinsic flat sense to limits even
when there is no smooth convergence anywhere.   In joint work
of the author with Jorge Basilio
\cite{Basilio-Sormani-1}, Theorem~\ref{BW-BASIC} is applied
to prove a collection of examples of sequences of manifolds
with nonnegative scalar curvature that have surprising limits.
\end{rmrk}

\begin{rmrk}\label{Sobolev}
In joint work with LeFloch \cite{LeFloch-Sormani-1} it is
proven that sequences of rotationally symmetric
regions with nonnegative scalar curvature, no interior
minimal surfaces and uniformly bounded Hawking mass
have subsequences which converge in the Intrinsic Flat sense.
The proof consists of first proving a Sobolev limit of the
metric tensors exist for a well chosen gauge and then
showing the sequence converges in the intrinsic flat
sense to the Sobolev limit.   In order to extend this relationship
between Sobolev limits and intrinsic flat limits to the
nonrotationally symmetric setting, one may try to apply
theorems from this paper in the same way that they are being
applied as described in Remark~\ref{new-examples}.
\end{rmrk}

\begin{rmrk}\label{cosmos}
In early work, the author studied the stability of the spacelike Friedmann
model of cosmology using the Gromov-Hausdorff distance \cite{Sor-cosmos}.   The Arzela-Ascoli Theorem for Gromov-Hausdorff 
convergence was a key ingredient in this work.
In order to apply Gromov-Hausdorff convergence, one
could not allow the universes under consideration to develop
thin deep wells.  However in work with Dan Lee \cite{LeeSormani1},
it is seen that thin deep gravity wells naturally occur even in
regions of small mass.   In order to study the stability of the 
spacelike Friedmann model of cosmology in a way which permits
thin deep gravity wells, one needs to use the intrinsic flat distance
(otherwise there are counterexamples).    The new Arzela-Ascoli
Theorems provided in this paper should now allow one to
extend the techniques in \cite{Sor-cosmos} to prove a new
intrinsic flat stability theorem for the spacelike Friedmann model
which allows for gravity wells.
\end{rmrk}

If a reader is interested in studying any of these questions, please contact the author.   More details can be provided and the author can coordinate the research of those working on these problems.  Funding to visit the author may be available.

\bibliographystyle{alpha}
\bibliography{2013}

\end{document}